\newtheorem{theorem}{Theorem}[section]
\newtheorem{lemma}[theorem]{Lemma}
\newtheorem{definition}[theorem]{Definition}
\newtheorem{prop}[theorem]{Proposition}
\newtheorem{assumption}{Assumption}[section]
\newtheorem{remark}{Remark}[section]
\def\XXint#1#2#3{{\setbox0=\hbox{$#1{#2#3}{\int}$ }
		\vcenter{\hbox{$#2#3$ }}\kern-.6\wd0}}
\newcommand{\tr}[0]{\text{tr}}
\newcommand{\h}{\hspace}
\newcommand{\p}{\partial}
\newcommand{\RN}[1]{%
  \textup{\uppercase\expandafter{\romannumeral#1}}%
}
\DeclareMathOperator*{\bbotimes}{\text{\raisebox{0.25ex}{\scalebox{0.75}{$\bigotimes$}}}}
\numberwithin{equation}{section}
\DeclareFontFamily{U}{mathx}{}
\DeclareFontShape{U}{mathx}{m}{n}{<-> mathx10}{}
\DeclareSymbolFont{mathx}{U}{mathx}{m}{n}
\DeclareMathAccent{\widehat}{0}{mathx}{"70}
\DeclareMathAccent{\widecheck}{0}{mathx}{"71}
\newcommand{\rcm}[1]{{\color{red}#1}}
\newcommand{\pcm}[1]{{\color{purple}#1}}
\newcommand{\mycomment}[1]{}
\newcommand\pig[1]{\scalerel*[5pt]{\big#1}{%
		\ensurestackMath{\addstackgap[1.5pt]{\big#1}}}}
\newcommand\pigr[1]{\mathclose{\pig{#1}}}
\title{ Viscosity Solutions of Fully second-order HJB Equations in the Wasserstein  Space\thanks{{I. Ekren
is partially supported by the NSF grant DMS-2406240. H. Cheung and J. Qiu are partially supported by Discovery Grant from the Natural Sciences and Engineering Research Council of Canada (NSERC).}} }
\date{}
\author[,1]{\normalsize Erhan Bayraktar\footnote{E-mail: erhan@umich.edu}}
\author[,2]{\normalsize Hang Cheung\footnote{E-mail: hang.cheung@ucalgary.ca}}
\author[,1]{\normalsize Ibrahim Ekren\footnote{E-mail: iekren@umich.edu }}
\author[,2]{\normalsize Jinniao Qiu\footnote{E-mail: jinniao.qiu@ucalgary.ca}}
\author[,3]{\normalsize Ho Man Tai\footnote{E-mail: homan.tai@dcu.ie}}
\author[,4]{\normalsize Xin Zhang\footnote{E-mail: xz1662@nyu.edu}}
\affil[1]{\small\it Department of Mathematics, University of Michigan, USA}
\affil[2]{\small\it Department of Mathematics and Statistics, University of Calgary, Canada}
\affil[3]{\small\it School of Mathematical Sciences, Dublin City University, Ireland}
\affil[4]{\small\it Department of Finance and Risk Engineering, New York University, USA}
\begin{document}
	\maketitle
	\begin{abstract}
            
            In this paper, we show that the value functions of mean field control problems with common noise are the unique viscosity solutions to fully second-order Hamilton-Jacobi-Bellman equations, in a Crandall-Lions-like framework. We allow the second-order derivative in measure to be state-dependent and thus infinite-dimensional, rather than derived from a finite-dimensional operator, hence the term ``fully''. Our argument leverages the construction of smooth approximations from particle systems developed by Cosso, Gozzi, Kharroubi, Pham, and Rosestolato [Trans. Amer. Math. Soc., 2023], and the compactness argument via penalization of measure moments in Soner and Yan [Appl. Math. Optim., 2024]. Our work addresses unbounded dynamics and state-dependent common noise volatility, and to our knowledge, this is the first result of its kind in the literature.
	\end{abstract}

 	\textbf{Keywords:} mean field type control, Wasserstein space, second-order HJB equation, viscosity solutions, Bellman equation, comparison theorem.\\
	
	\textbf{Mathematics Subject Classification (2020):} 49L25, 35Q93, 35B51, 58E30.

\section{Introduction}
This paper establishes the existence and uniqueness of viscosity solution to the following Hamilton-Jacobi-Bellman (HJB)  equation in the Wasserstein space arising from mean field control problems with common noise
\begin{align}
 \label{HJB_intro}
			\left\{\begin{aligned}
				&\partial_t u(t,\mu)+\int_{\mathbb{R}^d}
				\sup_{a\in A} \Bigg\{f(t,x,\mu,a)+b(t,x,\mu,a)\cdot \partial_\mu u(t,\mu)(x) \\
				&\h{90pt}+ \dfrac{1}{2}\text{tr}\Big((\sigma(t,x,a)\big[\sigma(t,x,a)\big]^\top+\sigma^0(t,x)[\sigma^0(t,x)]^\top)\nabla_x\partial_\mu u(t,\mu)(x)\Big)\Bigg\}\mu(dx) \\
				&+ \dfrac{1}{2} \int_{\mathbb{R}^d\times\mathbb{R}^d}\text{tr}\Big[\sigma^0(t,x)[\sigma^0(t,y)]^\top\partial_\mu^2
    u(t,\mu)(x,y)\Big] \, \mu^{\otimes 2}(dx,dy)=0,\h{5pt} \forall\, (t,\mu) \in [0,T) \times\mathcal{P}_2(\mathbb{R}^d),\\
				&u(T,\mu)=\int_{\mathbb{R}^d}g(x,\mu)\mu(dx) \h{10pt}\text{for $\mu\in \mathcal{P}_2(\mathbb{R}^d)$},
			\end{aligned}\right.
		\end{align}
where $A$ is the control space, and $b,\sigma,\sigma^0, f,g$ are coefficients of the associated optimization problem. Here $\partial_{\mu}u$ and $\partial^2_{\mu}u$ denote the respective first and second-order $L$-derivatives of $u$ \cite{carmona_probabilistic_2018_vol1,lions_annals}, and \eqref{HJB_intro} is a degenerate fully second-order equation in the Wasserstein space. By ``fully'' second-order, we mean that the second-order term is infinite-dimensional, rather than a finite-dimensional operator as in \cite{bayraktar_comparison_2023,daudin2023well,cheung2023viscosity}. Our manuscript is the first work in the literature to address the viscosity solution directly in the Wasserstein space and to allow for unbounded dynamics and state-dependent common noise volatility.

Over the past two decades, the mean field games and mean field control problems,  as models for strategic decision making among a large population of symmetric agents, have attracted considerable attention. This method was independently introduced by Huang, Malhamé, and Caines \cite{cis/1183728987}, and Lasry and Lions \cite{lasry_mean_2007}. In this framework, it is assumed that symmetric agents interact with each other through a medium known as the mean field term, which represents the collective influence of all agents’ decisions. For a comprehensive introduction to mean field theory, we refer readers to Bensoussan, Frehse, and Yam \cite{bensoussan_mean_2013}, Carmona and Delarue \cite{carmona_probabilistic_2018_vol1}, and Gomes, Pimentel, and Voskanyan \cite{gomes2016regularity}.

Due to the presence of the mean field term, the HJB equations of mean field control problems are defined in the Wasserstein space and involve derivatives of measures, making them infinite-dimensional problems. For the well-posedness of classical solutions to HJB equations in the Wasserstein space, we refer the reader to \cite{lions_annals} by Cardaliaguet, Delarue, Lasry, and Lions, \cite{CCD18} by Chassagneux, Crisan, and Delarue, \cite{Gangbo_Mou_Zhang_AOP} by Gangbo, M{\'e}sz{\'a}ros, Mou, and Zhang, and the references therein. However, the existence of classical solutions requires stringent regularity conditions on the coefficient functions and a monotonicity structure of the model. To analyze the HJB equations to mean-field control problems with general cost functions and state dynamics, the theory of viscosity solutions is indispensable, as it typically requires only continuity conditions and natural growth rates of the coefficient functions. 

To overcome the non-smooth structure of the Wasserstein space, one can lift equations in the Wasserstein space to Hilbert spaces and make use of the existing theory \cite{FGS17,bandini2019randomized,Pham_Wei_Dynamic_Programming}. However, the relation between the solutions to the lifted PDEs and the original ones is unclear; see \cite[Remark 3.6]{cosso_master_2022} for more details. The lifting of a smooth function in the Wasserstein space may not be second-order Fr\'{e}chet differentiable; see e.g. \cite[Example 2.3]{MR3630288}. Moreover, there is no suitable lift version of $\nabla_x \partial_{\mu}$ in the $L^2$ space. 

In this paper, we address the HJB equations directly in the Wasserstein space, establishing the existence and uniqueness of viscosity solution to equation \eqref{HJB_intro} under a Crandall-Lions-like Definition \ref{def. of vis sol}. A viscosity solution to equation \eqref{HJB_intro} is provided by the value function $v$ of the corresponding mean-field control problem; see Theorem~\ref{thm. existence of vis sol}. To prove uniqueness, we adopt the idea of smooth approximation $v_n \to v$, as $n \to \infty$ from \cite{cosso_master_2022,cheung2023viscosity}.  For any subsolution $u_1$, we aim to show that $u_1 \leq v$. Suppose the contrary, $u_1(t_0,\mu_0) > v(t_0,\mu_0)$ at some $(t_0,\mu_0)$ in the interior. Therefore, for sufficiently large $n$, $ \sup_{(t,\mu)} u_1(t,\mu)-v_n(t,\mu)>0$.  To show a contradiction, we want to find a maximum point of $(t,\mu) \mapsto u_1(t,\mu)-v_n(t,\mu)$ and apply the definition of viscosity solution. However, the Wasserstein space is not locally compact and the maximizer may not be obtained. Therefore, \cite{cosso_master_2022,cheung2023viscosity} applied the smooth variational principle so that perturbed functions achieve their global maximum. In \cite{cosso_master_2022,cheung2023viscosity, bayraktar_smooth_2023}, the perturbation terms are not second-order $L$-differentiable, and hence they were not able to prove uniqueness for fully second-order HJB equations in the Wasserstein space. In this paper, to overcome the compactness issue of the Wasserstein space, instead of applying the smooth variational principle, we adopt the idea of moment penalization from \cite{MeYa23}. For any $\mu \in \mathcal{P}_2(\mathbb R^d)$, we define $M_2(\mu):= \int x^2 \, \mu(dx)$. We can always find a small $\delta>0$ such that $u_1(t_0,\mu_0)-v_n(t_0,\mu_0)-\delta M_2(\mu_0)>0$. This implies that the maximum point of $u_1-v_n-\delta M_2$, if exists, must be attained in the set of measures with bounded second moments. As shown in Lemma \ref{lem. compact of V^p_K}, such a set is compact with respect to the finer topology induced by $1$-Wasserstein metric. Moreover, the coefficient functions are assumed to be Lipschitz continuous under the 1-Wasserstein metric in our framework, which implies the same property for the value function. Therefore, the maximum point of $u_1-v_n-\delta M_2$ is attained, and we derive the desired contradiction using the definition of viscosity solution. Another  advantage of this approach is that $M_2(\mu)$ possesses $L$-derivatives up to second-order in a very clean form, $\partial_{\mu} M_2(\mu)(x)=2x$ and $\partial_{\mu}^2 M_2(\mu)(x,y)=0$, which allows unbounded state dynamics and state dependent common noise in our estimate.

We note that the definition of viscosity subsolution in this paper follows the standard Crandall-Lions framework, but we need to modify the definition of viscosity supersolution in a stronger sense when dealing with equations with mean field terms. This modification aims to address the technical difficulty posed by the supremum in equation \eqref{HJB_intro}, which leads to scarce choice of test functions in the proof of the comparison for the supersolution.  Similarly to \cite{cosso_master_2022}, we construct smooth test functions for supersolutions using the cost functional with fixed control, but with the domain of $[0,T] \times \mathcal{P}_2(\mathbb{R}^d\times A)$ rather than $[0,T] \times\mathcal{P}_2(\mathbb{R}^d)$. The strong dependence on the initial random variable and the control makes it difficult to rely solely on the Crandall-Lions definition to draw the conclusion. We note that this modification of the definition is only required for the comparison theorem; however, the value function can still be shown to be the viscosity solution under the Crandall-Lions definition. Nonetheless, for consistency, we shall work on the Crandall-Lions-like definition for both the existence and the uniqueness theorem.  See Remark \ref{rmk modified supersol} for more details.

Let us mention other related references. Another possible approach to proving uniqueness is to apply the doubling variable technique as in the classical case \cite{user_guide}. Suppose $u_1,u_2$ are viscosity sub- and super-solutions respectively. By doubling variable, one compares the derivatives of $u_1,u_2$ at the maximum point of $(t,s,\mu,\nu) \mapsto u_1(t,\mu)-u_2(s,\nu)- \frac{\alpha}{2} (\rho^2(\mu,\nu)+|t-s|^2)$, where $\alpha$ is a large positive constant and $\rho$ is a penalization function that forces the maximizers $\mu_{\alpha},\nu_{\alpha}$ to be close. The main challenge is to construct proper penalization functions $\rho$ tailored to the choice of differentiability in the Wassertein space. In \cite{soner_viscosity_2022,MeYa23}, Soner and Yan used the Fourier-Wasserstein distance as the penalization function, which is smooth in terms of the $L$-differentiability, and proved the well-posedness for first-order HJB equations in the Wasserstein space. Using essentially the same penalization function, inspired by \cite{gangbo2021finite}, in \cite{bayraktar_comparison_2023} Bayraktar, Ekren, and Zhang considered a finite-dimensional second-order operator, what they called the partial Hessian $\mathcal{H}$, as the second-order derivative in the barycenter of probability distributions. Therefore, they were able to apply Ishii's lemma to obtain second-order jets, and proved a comparison principle for fully nonlinear degenerate partially second-order PDEs for Lipschitz continuous  functions. Using the same idea of partial Hessian, Daudin, Jackson, and Seeger \cite{daudin2023well} established the uniqueness of semilinear Hamilton-Jacobi equations for semicontinuous  functions by employing delicate estimates for a sequence of finite-dimensional approximation PDEs and smoothing techniques. Choosing the $2$-Wasserstein distance as the penalization function and using the intrinsic differentiability, in \cite{2023arXiv230604283B} Bertucci proved the well-posedness for a class of equations where the Hamiltonian appears  in the form of 
$ 
H(\mu,\partial_{\mu} u)+\frac{\sigma(t)^2}{2}  \mathcal{H}u(t,\mu)
$ 
for some proper choice of $H$ and deterministic function $\sigma$. Recently, in \cite{BeLi24} Bertucci and Lions proved a comparison principle for  equations involving $\partial_{\mu} u$ and $\nabla_x \partial_{\mu}u$ using the regularity property of sup-convolution.

As shown in \cite{cox2021controlled}, second-order PDEs in the Wasserstein space are also related to measure-valued martingale optimization problems. 
Relying on the specific martingale structure, that paper manages to reduce the problem to finitely supported measures where the usual viscosity theory can be applied. 
\cite{cox2021controlled} proved a general uniqueness result under a novel definition of a viscosity solution that might not enjoy the stability property of viscosity solutions. In \cite{zhou2024viscosity}, Touzi, Zhang, and Zhou tackled the HJB equations by lifting it to the process space, where they introduced a novel concept of viscosity solutions, demonstrating both existence and uniqueness under the assumption of Lipschitz continuity. PDEs in the space of measures also appear in mean-field optimal stopping problems \cite{MR4604196,MR4613226,2023arXiv230709278P} and control problems of occupied processes \cite{STZ24}. The convergence of particle systems in mean-field control problems was studied in \cite{2022arXiv221016004T,2022arXiv221100719T,MR4595996} based on the viscosity theory. The convergence rate for PDEs in the Wasserstein space was obtained in \cite{2023arXiv231211373C,2023arXiv230508423D,2022arXiv220314554C,CDJM24,bayraktar2024convergence}. Assuming the existence of smooth solution to mean-field PDEs, \cite{MR4507678} got the optimal convergence rate by a verification argument.

The rest of the paper is organized as follows. In Section 2, we present the problem formulation along with some preliminary results. Section 3 introduces the smooth finite-dimensional approximation of the value function and provides the related estimates. Section 4 contains the proof of the existence and uniqueness of the viscosity solution.

 \mycomment{
\section{Introduction}

 \mycomment{
   \pcm{( comment: why it is possible to have unbounded + second-order (avoiding finite dimensional $\mathcal{H}$ in Erhan, Ibrahim, Xin and Cheung, Tai and Qiu) + $\sigma^0$ has the argument of $X$ + no need of variational principle.)}\\

Modeling the collective behavior of a large group of agents within physical or sociological dynamical systems computationally prohibitive when using a microscopic approach. To overcome this challenge, a macroscopic approach gained prominence over the past two decades. This method was independently introduced by Huang, Malhamé, and Caines \cite{cis/1183728987}, and by Lasry and Lions \cite{lasry_mean_2007}. In this framework, agents interact through a  mean field term, which represents the collective influence of all agents’ decisions. More precisely, as the number of agents approaches infinity, the mean field term characterizes the entire population distribution, reducing computational complexity. For a comprehensive introduction to mean field theory, we refer readers to Bensoussan, Frehse, and Yam \cite{bensoussan_mean_2013}, Carmona and Delarue \cite{carmona_probabilistic_2018_vol1}, and Gomes, Pimentel, and Voskanyan \cite{gomes2016regularity}. Applications can be found in the works of Bandini, Cosso, Fuhrman, and Pham \cite{BCFP18}, as well as Saporito and Zhang \cite{SZ19}. 

To solve the mean field control problems, Hamilton-Jacobi-Bellman (HJB) equation (sometimes called the master Bellman equation)  a powerful tool. Due to the presence of the mean field term, the HJB equation is in the Wasserstein space and involves derivatives of measure, and hence it is an infinite dimensional problem. There are extensive studies on the classical solutions to the HJB equations in the Wasserstein space, such as Cardaliaguet, Delarue, Lasry, and Lions \cite{lions_annals},  Chassagneux, Crisan, and Delarue \cite{CCD18}, Gangbo, M{\'e}sz{\'a}ros, Mou and Zhang \cite{Gangbo_Mou_Zhang_AOP} and the reference therein. However, the well-posedness of classical solution of the HJB equation in the Wasserstein space requires stringent regularity conditions of the coefficient functions and a monotonicity structure of the model. To solve the mean field problems with cost functions and state dynamics as general as possible, the theory of viscosity is indispensable as it typically requires only some continuity conditions and natural growth rates of the coefficient functions. Such a relaxation is desirable in many applications. In this context, Pham and Wei \cite{Pham_Wei_Dynamic_Programming} presented a dynamic programming principle for situations where the control is adapted to the filtration generated solely by common noise. To prove the existence and uniqueness of the viscosity solution, they lifted the HJB equation to the Hilbert space of random variables, since the infinite-dimensional Wasserstein space lacks local compactness. Wu and Zhang \cite{mean_field_path} developed a new viscosity solution concept and worked on path-dependent equations to circumvent this lifting process. However, this new definition of viscosity solution differs from the conventional Crandall-Lions approach. Specifically, instead of requiring the maximum/minimum condition to hold only in a local neighborhood, their approach requires it to hold on a compact subset of the Wasserstein space. To resolve the local compactness issue and align with the traditional Crandall-Lions definition, Cosso, Gozzi, Kharroubi, Pham, and Rosestolato \cite{cosso_master_2022} applied the Borwein-Preiss generalization of Ekeland's variational principle. They first constructed the comparison map by perturbing the approximated value function with a smooth gauge function. This principle then shows that the difference between the subsolution/supersolution and the comparison map can attain its maximum or minimum. Through this method, they tried to extend the Crandall-Lions' definition of viscosity solutions to the Wasserstein space. \pcm{See also Cheung, Tai and Qiu \cite{cheung2023viscosity} about the extension along this direction.}
    
Except \cite{cheung2023viscosity,Pham_Wei_Dynamic_Programming}, the aforementioned works primarily address first-order HJB equations (without common noise) in the Wasserstein space. The second-order HJB equation, induced by control problem with common noise, has garnered significant attention recently. Cheung, Tai and Qiu \cite{cheung2023viscosity} extended and modified the approach of  \cite{cosso_master_2022} to incorporate common noise by using the smooth metric developed by Bayraktar, Ekren, and Zhang \cite{bayraktar_smooth_2023} as the gauge function in the application of the smooth variational principle. However, the definition of viscosity solution there is different from the standard Crandall-Lions' one and the second-order differential operator regarding the common noise comes from the pushforward of a transition map (\pcm{is it description on $\mathcal{H}$ correct?}) $\mathcal{H}$ developed in Bayraktar, Ekren, and Zhang \cite{bayraktar_comparison_2023}. The operator $\mathcal{H}$ replace the classical second-order $L$-differentiation by observing that the second-order $L$-derivative appears in the form of integration and is actually finite-dimensional. \pcm{(some advantages? maybe Xin know better)} Ishii's lemma in the Wasserstein space was also successfully established in  \cite{bayraktar_comparison_2023}, along with the Fourier-Wasserstein metric used in \cite{soner_viscosity_2022}. It allows direct comparison of second-order equations without relying on finite-dimensional approximation techniques, as used in \cite{cosso_master_2022, cheung2023viscosity}, and broadened the scope to handle a wider variety of equations beyond those arising from control problems, such as HJB equations. However, the use of the Fourier-Wasserstein metric necessitated stringent regularity conditions on the coefficients. Besides, Daudin, Jackson, and Seeger \cite{daudin2023well} established the well-posedness of viscosity solutions for semilinear Hamilton-Jacobi equations (not necessarily derived from control problems) by employing delicate estimates for a sequence of finite-dimensional approximation PDEs and smoothing techniques. However, they focused on the space of probability measures over the torus only. See also Bayraktar, Ekren and Zhang \cite{bayraktar2024convergence} for the convergence rate. Recently, Touzi, Zhang, and Zhou \cite{zhou2024viscosity} introduced a novel concept of viscosity solutions, demonstrating both existence and uniqueness under the assumption of Lipschitz continuity only. The main feature of their new notion is its incorporation of an additional singular component in the test function, enabling the treatment of second-order derivatives of test functions without invoking Ishii's lemma and allowing to handle second-order equations with both drift and volatility controls. Nevertheless, this notion is different from the Crandall-Lions' definition in the sense that it allows a nonsmooth component in the test function. \pcm{Note that all the literature above only allow bounded coefficients, except \cite{zhou2024viscosity,daudin2023well}.}

    In contrast to the literature referenced, the primary contribution of our article is the consideration of the HJB equation with the second-order $L$-derivative with unbounded coefficients and state-dependent common noise volatility,  under mild assumptions (see Assumptions \ref{assume:A} and \ref{assume:B}). Furthermore, our approach to viscosity solutions is intrinsic (in the sense that we do not have to lift the equations to Hilbert spaces), and we extend our analysis to allow the state space to encompass the entire space $\mathbb{R}^d$ rather than being confined to a torus. The goal of this paper is to establish the existence and uniqueness of viscosity solution of the following HJB equation:
\begin{align}
 \label{HJB_intro}
			\left\{\begin{aligned}
				&\partial_t u(t,\mu)+\int_{\mathbb{R}^d}
				\sup_{a\in A} \Bigg\{f(t,x,\mu,a)+b(t,x,\mu,a)\cdot \partial_\mu u(t,\mu)(x) \\
				&\h{100pt}+ \dfrac{1}{2}\text{tr}\Big((\sigma(t,x,a)\big[\sigma(t,x,a)\big]^\top+\sigma^0(t,x)[\sigma^0(t,x)]^\top)\nabla_x\partial_\mu u(t,\mu)(x)\Big)\Bigg\}\mu(dx) \\
				&+ \dfrac{1}{2} \int_{\mathbb{R}^d\times\mathbb{R}^d}\text{tr}\Big[\sigma^0(t,x)[\sigma^0(t,y)]^\top\partial_\mu^2
    u(t,\mu)(x,y)\Big] \, \mu^{\otimes 2}(dx,dy)=0,\h{5pt} \text{for any $(t,\mu) \in [0,T) \times\mathcal{P}_2(\mathbb{R}^d)$};\\
				&u(T,\mu)=\int_{\mathbb{R}^d}g(x,\mu)\mu(dx) \h{10pt}\text{for $\mu\in \mathcal{P}_2(\mathbb{R}^d)$}.
			\end{aligned}\right.
		\end{align}
under a Crandall-Lions-like definition in Definition \ref{def. of vis sol}. The unique viscosity solution to the above equation is indeed the value function of a mean field control problem mentioned in Section \ref{sec. Associated Mean Field Control Problems}, see Theorems \ref{thm. existence of vis sol} and \ref{thm compar}. As mentioned earlier, the main difficulty in establishing the comparison theorem arises from the lack of local compactness in the Wasserstein space. The inspiring works \cite{cosso_master_2022, cheung2023viscosity} employed the smooth variational principle to overcome this hurdle, but the principle makes constructing a comparison map with a second-order $L$-derivative difficult. 

To ensure a certain compactness without relying on the smooth variational principle, we pick the small perturbation of the approximated value function with the second moment of measure as the comparison map. Let $v(t,\mu)$ and $u_1(t,\mu)$ be the value function and viscosity subsolution respectively. If there is a point $(t_0,\mu_0)$ such that $u_1(t_0,\mu_0)-v(t_0,\mu_0)>0$, then we can always find a small $\delta>0$ such that $u_1(t_0,\mu_0)-v(t_0,\mu_0)-\delta M_2(\mu_0)>0$ where $M_2(\mu_0)$ is the second moment of $\mu_0$. This implies that the maximum point of $u_1-v-\delta M_2$, if exists, must be attained in the set of measures with bounded second moments. As shown in Lemma \ref{lem. compact of V^p_K}, such a set is compact with respect to the finer topology induced by $1$-Wasserstein metric. Moreover, the coefficient functions are assumed to be Lipschitz continuous under the 1-Wasserstein metric in our framework, which implies the same property for the value function. Therefore, the maximum point of $u_1-v-\delta M_2$ is attained. The mentioned continuity of the coefficient functions also facilitates the estimate of the approximation of the value function. We then smoothly approximate the value function and utilize the simple form of the derivatives of $M_2(\mu)$, ensuring that the constructed comparison map possesses a second-order $L$-derivative. By comparing the equation for the comparison map with the HJB equation \eqref{HJB_intro}, we derive the desired contradiction. The main advantage of this approach is that it enables the construction of a very simple comparison map from the second moment function which possesses derivatives up to second-order in very clean forms, allowing the unbounded state dynamics in our estimate.

We note that the definition of a viscosity subsolution follows the standard Crandall-Lions framework, but we need to modify the definition of a viscosity supersolution in a stronger sense when dealing with equations with mean field terms. This modification aims to address the technical difficulty posed by the supremum in equation \eqref{HJB_intro}, which leads to scarce choice of test functions in the proof of the comparison for the supersolution.  Similar to \cite{cosso_master_2022}, we construct the test function for supersolutions using the cost functional with a fixed control, but with the domain of $[0,T] \times \mathcal{P}_2(\mathbb{R}^d\times A)$ rather than $[0,T] \times\mathcal{P}_2(\mathbb{R}^d)$. The strong dependence on the initial random variable and the control makes it difficult to rely solely on the Crandall-Lions definition to draw conclusion. We note that this modification of definition is only required for the comparison theorem, we can still show that the value function is the viscosity solution under the Crandall-Lions definition. However, for consistency, we shall work on the  Crandall-Lions-like definition for both the existence and uniqueness theorem.  See Remark \ref{rmk modified supersol} for more details. 

The rest of the paper is organized as follows. In Section 2, we present the problem formulation along with some preliminary results. Section 3 introduces the smooth finite-dimensional approximation of the value function and provides the related estimates. Section 4 contains the proof of the existence and uniqueness of the viscosity solution.
}
 
Modeling the collective behavior of a large group of agents within physical or sociological dynamical systems through games or control problems is computationally prohibitive when using a microscopic approach. To overcome this challenge, a macroscopic approach known as mean field games and mean field control problems has gained prominence over the past two decades. This method was independently introduced by Huang, Malhamé, and Caines \cite{cis/1183728987}, and by Lasry and Lions \cite{lasry_mean_2007}. In this framework, agents are assumed to be symmetric and they interact with each other through a medium known as the mean field term, which represents the collective influence of all agents’ decisions. More precisely, as the number of agents approaches infinity, the mean field term characterizes the entire population distribution, thereby reducing computational complexity. For a comprehensive introduction to mean field theory, we refer readers to Bensoussan, Frehse, and Yam \cite{bensoussan_mean_2013}, Carmona and Delarue \cite{carmona_probabilistic_2018_vol1}, and Gomes, Pimentel, and Voskanyan \cite{gomes2016regularity}.

Due to the presence of the mean field term, the HJB equations of mean field control problems are defined in the Wasserstein space and involve derivatives of measures, making them infinite-dimensional problems. There are extensive studies on the classical solutions to HJB equations in the Wasserstein space, such as those by Cardaliaguet, Delarue, Lasry, and Lions \cite{lions_annals}, Chassagneux, Crisan, and Delarue \cite{CCD18}, Gangbo, M{\'e}sz{\'a}ros, Mou, and Zhang \cite{Gangbo_Mou_Zhang_AOP}, and the references therein.

However, the well-posedness of classical solutions to the HJB equation in the Wasserstein space requires stringent regularity conditions on the coefficient functions and a monotonicity structure of the model. To apply HJB equations to mean field control problems with  general cost functions and state dynamics, the theory of viscosity solutions is indispensable, as it typically requires only continuity conditions and natural growth rates of the coefficient functions. Nevertheless, the Wasserstein space lacks local compactness and a smooth metric, which are crucial in the theory of viscosity solutions. To address these, two approaches can be considered. The first approach involves lifting the equations to another infinite-dimensional space where they can be handled more effectively. In this context, Pham and Wei \cite{Pham_Wei_Dynamic_Programming} presented a dynamic programming principle for situations where the control is adapted to the filtration generated solely by common noise and derived the associated HJB equation. To prove the existence and uniqueness of the viscosity solution to the HJB equation, they lifted the HJB equation to the Hilbert space of random variables, since the infinite-dimensional Wasserstein space lacks local compactness {\color{blue}XZ: this part I don't understand. Hilbert space is not locally compact either}. Touzi, Zhang, and Zhou \cite{zhou2024viscosity} tackled this HJB equation by lifting it to the process space, where they introduced a novel concept of viscosity solutions, demonstrating both existence and uniqueness under the assumption of only Lipschitz continuity. A key feature of their new notion is the incorporation of an additional singular component in the test function, enabling the treatment of second-order derivatives without invoking Ishii's lemma and allowing the handling of second-order equations with both drift and volatility controls. However, this notion differs from Crandall-Lions' definition in that it allows a nonsmooth component in the test function {\color{blue} XZ: but why do we need Crandall-Lions' definition...Perhaps we can skip the discussion and say the relationship between lifted PDE and original one is not clear...}.

The second approach is to address the equation directly on  the Wasserstein space, utilizing compactness results or variational principles to facilitate the analysis. Wu and Zhang \cite{mean_field_path} developed a new concept of viscosity solutions and worked on path-dependent equations. This new definition of viscosity solutions departs from the conventional Crandall-Lions approach by requiring the maximum/minimum condition to hold on a compact subset of the Wasserstein space, rather than merely in a local neighborhood, thereby paving the way for the development of a unified theory of viscosity solutions. To resolve the local compactness issue and align with the traditional Crandall-Lions definition, Cosso, Gozzi, Kharroubi, Pham, and Rosestolato \cite{cosso_master_2022} applied the Borwein-Preiss generalization of Ekeland's variational principle. They first constructed the comparison map {\color{blue}XZ: what do you mean by comparison map?}\pcm{HoMan: that is the test function used in the comparison theorem, for example, the comparison map is $\widecheck{v}_{\varepsilon, n,m}(t,\mu)-\delta M_{2}(\mu)$ in our case. We can use the term test function or comparison map.} by perturbing the approximate value function with a smooth gauge function. This principle then shows that the difference between the subsolution/supersolution and the comparison map can attain its maximum or minimum. Through this method, they aimed to extend the Crandall-Lions definition of viscosity solutions to the Wasserstein space. Soner and Yan \cite{soner_viscosity_2022} introduced the smooth Fourier-Wasserstein metric and used the doubling of variables argument to prove the uniqueness of viscosity solutions. In a subsequent work \cite{MeYa23}, they tackled the non-compactness issue by penalizing higher moments of measures, ensuring the uniqueness of the viscosity solution to the Eikonal equation.

The aforementioned works that address the HJB equations directly in the Wasserstein space primarily focus on first-order HJB equations, which correspond to mean field control problems without common noise. The second-order HJB equations, arising from control problems with common noise, have recently attracted considerable attention. Cheung, Tai, and Qiu \cite{cheung2023viscosity} extended and modified the approach of \cite{cosso_master_2022} to incorporate common noise by using the smooth metric developed by Bayraktar, Ekren, and Zhang \cite{bayraktar_smooth_2023} as the gauge function in the application of the smooth variational principle. However, the definition of viscosity solutions in their work differs from the standard Crandall-Lions definition, and the second-order differential operator related to the common noise comes from the pushforward of a transition map (\pcm{is it description on $\mathcal{H}$ correct?}), \( \mathcal{H} \), developed by Bayraktar, Ekren, and Zhang \cite{bayraktar_comparison_2023}. The operator \( \mathcal{H} \) replaces the classical second-order $L$-differentiation by observing that the second-order $L$-derivative appears in the form of an integral and is actually finite-dimensional. \pcm{(Some advantages? Maybe Xin knows better)} Ishii's lemma in the Wasserstein space was also successfully established in \cite{bayraktar_comparison_2023}, along with the Fourier-Wasserstein metric used in \cite{soner_viscosity_2022}. This allows for the direct comparison of second-order equations without relying on finite-dimensional approximation techniques, as used in \cite{cosso_master_2022, cheung2023viscosity}, and broadens the scope to handle a wider variety of equations beyond those arising from control problems, such as HJB equations. However, the use of the Fourier-Wasserstein metric necessitated stringent regularity conditions on the coefficients. Additionally, Samuel, Joe, and Benjamin \cite{daudin2023well} established the well-posedness of viscosity solutions for semilinear Hamilton-Jacobi equations (not necessarily derived from control problems) by employing delicate estimates for a sequence of finite-dimensional approximation PDEs and smoothing techniques. However, their focus was restricted to the space of probability measures on the torus. See also Bayraktar, Ekren, and Zhang \cite{bayraktar2024convergence} for results on the convergence rate.

It is worth mentioning that nearly all existing literature directly addressing equations in the Wasserstein space, whether first-order or second-order, imposes the condition of bounded coefficients for the state dynamics, and extending these results to the unbounded case is not straightforward. In Wu and Zhang \cite{mean_field_path}, the maximum/minimum is taken over a compact set that includes only the laws of SDEs with bounded dynamics. This set ceases to be compact when laws involving unbounded SDEs are considered. In Cosso, Gozzi, Kharroubi, Pham, and Rosestolato \cite{cosso_master_2022}, the growth condition imposed by their choice of gauge function prevents the dynamics from being unbounded, as otherwise they would be unable to apply the It\^o's formula. In Bayraktar, Ekren, and Zhang \cite{bayraktar_comparison_2023}, in order to employ the Fourier-Wasserstein metric used in \cite{soner_viscosity_2022}, they are required to impose stringent assumptions, and relaxing these conditions appears to be quite challenging \pcm{(Maybe Xin can add some comments here)}. The few exceptions to the bounded dynamics condition are Samuel, Joe, and Benjamin \cite{daudin2023well} and Soner and Yan \cite{MeYa23}, but the former restricts the analysis to probability measures on the torus, thereby circumventing the issue of the lack of local compactness in the Wasserstein space, and the latter considers only a special form of the control problems leading to the Eikonal equations. Furthermore, none of these works permits the common noise volatility to depend on the state. The most advanced results in the literature utilize the finite-dimensional operator $\mathcal{H}$ defined in Bayraktar, Ekren, and Zhang \cite{bayraktar_comparison_2023}, which only allows time dependence {\color{blue} do you mean the distribution dependence?}.

In contrast, the primary contribution of our article is the consideration of the HJB equation with the second-order $L$-derivative, \sout{with unbounded dynamics} \pcm{unbounded coefficients} \rcm{Hang: In Wu Cong, I think they allow unbounded cost function, so I think it would be more clear if we say unbounded dynamics, or unbounded coefficient in the state dynamics} and state-dependent common noise volatility, under mild assumptions (see Assumptions \ref{assume:A} and \ref{assume:B}). Furthermore, our approach to viscosity solutions is intrinsic (in the sense that we do not have to lift the equations to Hilbert spaces), and we extend our analysis to allow the state space to encompass the entire space $\mathbb{R}^d$ rather than being confined to a torus. The goal of this paper is to establish the existence and uniqueness of viscosity solution of the following HJB equation:
\begin{align}
 \label{HJB_intro}
			\left\{\begin{aligned}
				&\partial_t u(t,\mu)+\int_{\mathbb{R}^d}
				\sup_{a\in A} \Bigg\{f(t,x,\mu,a)+b(t,x,\mu,a)\cdot \partial_\mu u(t,\mu)(x) \\
				&\h{100pt}+ \dfrac{1}{2}\text{tr}\Big((\sigma(t,x,a)\big[\sigma(t,x,a)\big]^\top+\sigma^0(t,x)[\sigma^0(t,x)]^\top)\nabla_x\partial_\mu u(t,\mu)(x)\Big)\Bigg\}\mu(dx) \\
				&+ \dfrac{1}{2} \int_{\mathbb{R}^d\times\mathbb{R}^d}\text{tr}\Big[\sigma^0(t,x)[\sigma^0(t,y)]^\top\partial_\mu^2
    u(t,\mu)(x,y)\Big] \, \mu^{\otimes 2}(dx,dy)=0,\h{5pt} \text{for any $(t,\mu) \in [0,T) \times\mathcal{P}_2(\mathbb{R}^d)$};\\
				&u(T,\mu)=\int_{\mathbb{R}^d}g(x,\mu)\mu(dx) \h{10pt}\text{for $\mu\in \mathcal{P}_2(\mathbb{R}^d)$},
			\end{aligned}\right.
		\end{align}
under a Crandall-Lions-like definition in Definition \ref{def. of vis sol}. The unique viscosity solution to the above equation is indeed the value function of a mean field control problem mentioned in Section \ref{sec. Associated Mean Field Control Problems}; see Theorems \ref{thm. existence of vis sol} and \ref{thm compar}. As mentioned earlier, the main difficulty in establishing the comparison theorem arises from the lack of local compactness in the Wasserstein space. The inspiring works \cite{cosso_master_2022, cheung2023viscosity} employed the smooth variational principle to overcome this hurdle, but this principle makes it difficult to construct a comparison map with a second-order $L$-derivative, as it is hard to find a second-order $L$-differentiable gauge function. Moreover, the choice of the gauge function may prohibit the unboundedness of the state dynamics, as in \cite{cosso_master_2022}.

To ensure a certain compactness without relying on the smooth variational principle, we pick the small perturbation of the approximated value function with the second moment of measure as the comparison map. Let $v(t,\mu)$ and $u_1(t,\mu)$ be the value function and viscosity subsolution respectively. Inspired by \cite{MeYa23}, if there is a point $(t_0,\mu_0)$ such that $u_1(t_0,\mu_0)-v(t_0,\mu_0)>0$, then we can always find a small $\delta>0$ such that $u_1(t_0,\mu_0)-v(t_0,\mu_0)-\delta M_2(\mu_0)>0$ where $M_2(\mu_0)$ is the second moment of $\mu_0$. This implies that the maximum point of $u_1-v-\delta M_2$, if exists, must be attained in the set of measures with bounded second moments. As shown in Lemma \ref{lem. compact of V^p_K}, such a set is compact with respect to the finer topology induced by $1$-Wasserstein metric. Moreover, the coefficient functions are assumed to be Lipschitz continuous under the 1-Wasserstein metric in our framework, which implies the same property for the value function. Therefore, the maximum point of $u_1-v-\delta M_2$ is attained. The mentioned continuity of the coefficient functions also facilitates the estimate of the approximation of the value function. We then smoothly approximate the value function and utilize the simple form of the derivatives of $M_2(\mu)$, ensuring that the constructed comparison map possesses a second-order $L$-derivative, and a nice enough growth condition that allows unbounded dynamics. By comparing the equation for the comparison map with the HJB equation \eqref{HJB_intro}, we derive the desired contradiction. The main advantage of this approach is that it enables the construction of a very simple comparison map from the second moment function which possesses derivatives up to second-order in very clean forms, allowing the unbounded state dynamics and the volatility of the common noise to be dependent on the state in our estimate.

The rest of the paper is organized as follows. In Section 2, we present the problem formulation along with some preliminary results. Section 3 introduces the smooth finite-dimensional approximation of the value function and provides the related estimates. Section 4 contains the proof of the existence and uniqueness of the viscosity solution.
}

	\section{Preliminaries}
In this section, we introduce the framework for setting up the HJB equation in \eqref{HJB_intro} and recall some basic results. Before this, we shall introduce some notations which are frequently used in the article. For any random variable $X$, the law of $X$ is denoted by $\mathcal{L}(X)$. For any \(x \in \mathbb{R}^d\), we denote its Euclidean norm by \(|x|\), and its \(i\)-th component by \(x_i\) or \((x)_i\). The standard scalar product of \(x, y \in \mathbb{R}^d\) is written as \(\langle x, y \rangle\) or \(x \cdot y\).  Let \( n \in \mathbb{N} \) and \( x^1, x^2, \ldots, x^n \in \mathbb{R}^d \), we denote the vector \( \overline{x} \in \mathbb{R}^{dn} \) by \( \overline{x} = (x^1, x^2, \ldots, x^n)^\top \). For any matrix \( M \in \mathbb{R}^{d \times d} \), the trace of \( M \) is denoted by \( \textup{tr}(M) := \sum_{i=1}^d M_{ii} \), its transpose by \( M^\top \), and its Frobenius norm by \( |M| := \left[\operatorname{tr}(M M^\top)\right]^{1/2} \). If \( M^0 \in \mathbb{R}^{d \times d} \) is another matrix, its transpose is denoted by \( M^{0;\top} \). The identity matrix in \( \mathbb{R}^d \) is denoted by \( I_d \). For a scalar variable \( x \in \mathbb{R} \), \( \p_x h \in \mathbb{R} \) refers to the partial derivative of the scalar function \( h \) with respect to \( x \). For a vector \( x \in \mathbb{R}^d \), \( \nabla_x h \in \mathbb{R}^d \) denotes the gradient of \( h \).

\subsection{Compactness and Differentiation in the Wasserstein space}
We introduce over $\mathbb{R}^d$ the space of probability measures $\mathcal{P}(\mathbb{R}^d)$ and define the $p$-th moment of any $\mu \in \mathcal{P}(\mathbb R^d)$ by 
\begin{align}
\label{moment_V_def}
M_p(\mu):=\int_{\mathbb{R}^d} |x|^p \, \mu(dx),\quad\text{for any $p\geq1$,}
\end{align}if it is finite.
We also denote $\mathcal{P}_p(\mathbb{R}^d)$ the subset of $\mathcal{P}(\mathbb{R}^d)$ consisting of those with finite $p$-th moment for $p\geq 1$. The space $\mathcal{P}_p(\mathbb{R}^d)$ is typically equipped with the $q$-Wasserstein distance with $q\in[1,p]$ and
		$$
		\mathcal{W}_q(\mu, \nu):=\inf _{\pi \in \Pi(\mu, \nu)}\Bigg(\int_{\mathbb{R}^d \times \mathbb{R}^d}|x-y|^q \pi(dx, dy)\Bigg)^{\frac{1}{q}}, \quad  \text{for $\mu, \nu \in \mathcal{P}_p(\mathbb{R}^d)$},
		$$
    		where $\Pi(\mu, \nu)$ is the set of probability measures on $\mathbb{R}^d\times\mathbb{R}^d$ satisfying $\pi(\cdot \times \mathbb R^d) = \mu $ and $\pi(\mathbb{R}^d \times \cdot) = \nu$. We present a simple compactness criterion for some subsets of the Wasserstein space, which is crucial for our uniqueness result. 

\begin{lemma}
    Let $K>0$ and $p_2 > p_1 \geq 1$. The set $V^{p_1,p_2}_K:=\{ \mu\in\mathcal{P}_{p_1}(\mathbb{R}^d) : M_{p_2}(\mu) \leq K\}=\{ \mu\in\mathcal{P}_{p_2}(\mathbb{R}^d) : M_{p_2}(\mu) \leq K\}$ is compact in $(\mathcal{P}_{p}(\mathbb{R}^d),\mathcal{W}_{p_1})$ for any $p\in[p_1,p_2]$.
    \label{lem. compact of V^p_K}
\end{lemma}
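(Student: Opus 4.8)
The plan is to characterize compactness in $(\mathcal{P}_{p_1}(\mathbb{R}^d), \mathcal{W}_{p_1})$ via the classical criterion: a set is relatively compact in the $\mathcal{W}_{p_1}$-topology if and only if it is tight and has uniformly integrable $p_1$-th moments (i.e. $\lim_{R\to\infty}\sup_{\mu}\int_{|x|>R}|x|^{p_1}\,\mu(dx)=0$). So the first step is to verify that $V^{p_1,p_2}_K$ satisfies both conditions. Tightness is immediate from Markov's inequality: $\mu(|x|>R)\le M_{p_2}(\mu)/R^{p_2}\le K/R^{p_2}$, uniformly in $\mu\in V^{p_1,p_2}_K$. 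Uniform integrability of the $p_1$-th moments follows from the strictly higher integrability exponent: on $\{|x|>R\}$ we have $|x|^{p_1}=|x|^{p_2}\cdot|x|^{p_1-p_2}\le R^{p_1-p_2}|x|^{p_2}$, hence $\int_{|x|>R}|x|^{p_1}\,\mu(dx)\le R^{p_1-p_2}K\to 0$ as $R\to\infty$, again uniformly in $\mu\in V^{p_1,p_2}_K$. This shows $V^{p_1,p_2}_K$ is relatively compact in $(\mathcal{P}_{p_1}(\mathbb{R}^d),\mathcal{W}_{p_1})$.

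The second step is to show $V^{p_1,p_2}_K$ is closed in $(\mathcal{P}_{p_1}(\mathbb{R}^d),\mathcal{W}_{p_1})$, so that relative compactness upgrades to compactness. If $\mu_n\to\mu$ in $\mathcal{W}_{p_1}$ with $\mu_n\in V^{p_1,p_2}_K$, then in particular $\mu_n\to\mu$ weakly, so by the Portmanteau theorem and Fatou's lemma applied to the lower-semicontinuous functional $\nu\mapsto\int |x|^{p_2}\wedge L\,\nu(dx)$ followed by monotone convergence in $L$, we get $M_{p_2}(\mu)\le\liminf_n M_{p_2}(\mu_n)\le K$. Hence $\mu\in V^{p_1,p_2}_K$, proving closedness. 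Combined with Step 1, $V^{p_1,p_2}_K$ is compact in $(\mathcal{P}_{p_1}(\mathbb{R}^d),\mathcal{W}_{p_1})$. The equality of the two descriptions of the set in the statement is trivial, since $M_{p_2}(\mu)\le K<\infty$ forces $\mu\in\mathcal{P}_{p_2}(\mathbb{R}^d)\subset\mathcal{P}_{p_1}(\mathbb{R}^d)$.

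The third step handles general $p\in[p_1,p_2]$: I claim that on $V^{p_1,p_2}_K$ the topologies induced by $\mathcal{W}_{p}$ for different $p$ in this range all coincide, so compactness in $\mathcal{W}_{p_1}$ gives compactness in $\mathcal{W}_p$. Since $\mathcal{W}_{p_1}\le\mathcal{W}_p$ always, it suffices to show that $\mathcal{W}_{p_1}$-convergence within $V^{p_1,p_2}_K$ implies $\mathcal{W}_p$-convergence. This is where the uniform $p_2$-moment bound does the real work: $\mathcal{W}_p$-convergence is equivalent to weak convergence plus convergence of $p$-th moments, and again writing $|x|^p\le R^{p-p_2}|x|^{p_2}$ on $\{|x|>R\}$ shows the $p$-th moments are uniformly integrable along any $\mathcal{W}_{p_1}$-convergent (hence weakly convergent) sequence in $V^{p_1,p_2}_K$; combined with weak convergence this yields convergence of the $p$-th moments, hence $\mathcal{W}_p$-convergence. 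Thus $V^{p_1,p_2}_K$ is sequentially compact, and since $(\mathcal{P}_p(\mathbb{R}^d),\mathcal{W}_p)$ is metric, compact, in $(\mathcal{P}_p(\mathbb{R}^d),\mathcal{W}_p)$ for every $p\in[p_1,p_2]$.

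I do not anticipate a serious obstacle here; the only point requiring care is invoking the correct form of the compactness/convergence characterization for $\mathcal{W}_p$ (weak convergence together with uniform integrability of $p$-th moments, equivalently convergence of $p$-th moments), and making sure the strict inequality $p_2>p_1\ge p$ is used precisely where uniform integrability is extracted — the argument genuinely fails at $p=p_2$, which is consistent with the statement restricting to $p\le p_2$ and with the set being defined by a closed moment constraint rather than, say, a tightness-only condition.
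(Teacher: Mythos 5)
Your Steps 1 and 2 are a correct proof of the lemma as stated, and they match the paper's argument: tightness from Markov's inequality, Prokhorov to extract a weakly convergent subsequence, lower semicontinuity of $M_{p_2}$ via truncation and monotone convergence to show the limit stays in $V^{p_1,p_2}_K$, and the Villani characterization (weak convergence plus uniform integrability of $p_1$-th moments) to upgrade weak convergence to $\mathcal{W}_{p_1}$-convergence. The single technical variation is the uniform-integrability bound: you use the pointwise estimate $|x|^{p_1}\leq R^{p_1-p_2}|x|^{p_2}$ on $\{|x|>R\}$, while the paper splits with H\"older's inequality, getting the identical rate $K/R^{p_2-p_1}$. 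This is an immaterial cosmetic difference.

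Your Step 3, however, rests on a misreading of the statement. The lemma asserts compactness in $(\mathcal{P}_p(\mathbb{R}^d),\mathcal{W}_{p_1})$ for $p\in[p_1,p_2]$: the metric is always $\mathcal{W}_{p_1}$; only the ambient space $\mathcal{P}_p$ changes, which is irrelevant since compactness of a subspace is intrinsic. So after Steps 1 and 2 you are done --- the entire third step is unnecessary. Moreover, Step 3 as written is internally inconsistent: you correctly observe that the uniform-integrability estimate $|x|^p\leq R^{p-p_2}|x|^{p_2}$ degenerates at $p=p_2$, yet you then conclude compactness in $(\mathcal{P}_p,\mathcal{W}_p)$ ``for every $p\in[p_1,p_2]$.'' The failure at $p=p_2$ is not merely an artifact of the method: the set is genuinely \emph{not} $\mathcal{W}_{p_2}$-compact. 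For instance $\mu_n=(1-1/n)\delta_0+(1/n)\delta_{(nK)^{1/p_2}e_1}$ satisfies $M_{p_2}(\mu_n)=K$ and converges weakly (indeed in $\mathcal{W}_{p_1}$) to $\delta_0$, but $\mathcal{W}_{p_2}^{p_2}(\mu_n,\delta_0)=K$ does not vanish, so no subsequence $\mathcal{W}_{p_2}$-converges. This is exactly why the lemma is stated in the $\mathcal{W}_{p_1}$ metric and why the stronger claim at the end of your Step 3 cannot be salvaged.
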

\begin{proof}
    Consider the closed ball $\overline{B_R}$ centered at $0$ with radius $R>0$ in $\mathbb{R}^d$. Then for any $\mu \in V^{p_1,p_2}_K$ and any random variable $X^\mu$ with law $\mu$, we have
    \begin{align*}
        \mu(\overline{B_R}) = \mathbb{P}(X^\mu \in \overline{B_R}) = 1 - \mathbb{P}(|X^\mu|> R)\geq 1 - K/R^{p_2},
    \end{align*}
    by Markov's inequality. $V^{p_1,p_2}_K$ is thus tight. Therefore, by Prokhorov's theorem, for any sequence $\{\mu_n\}_{n\in\mathbb{N}}\subseteq V^{p_1,p_2}_K$, there exists a subsequence $\{\mu_{n_k}\}_{k\in\mathbb{N}}$ and $\mu^*\in \mathcal{P}_{p_1}(\mathbb R^d)$ such that $\mu_{n_k}\to\mu^*$ weakly in the sense that $\int_{\mathbb{R}^d}f(x)\mu_{n_k}(dx) \to \int_{\mathbb{R}^d}f(x)\mu^*(dx)$ as $k\to \infty$, for any bounded and continuous function $f:\mathbb{R}^d \to\mathbb{R}$. Hence, it holds that $\int_{\mathbb{R}^d}\pig(|x|^{p_2}\wedge M\pig)\mu^*(dx)\leq K$ as $\int_{\mathbb{R}^d}\pig(|x|^{p_2}\wedge M\pig)\mu_{n_k}(dx)\leq K$ for any $k \in \mathbb{N}$. Passing $M \to \infty$, the monotone convergence theorem implies that
    $\int_{\mathbb{R}^d} |x|^{p_2} \mu^*(dx)\leq K$, therefore $\mu^* \in V_K^{p_1,p_2}$. 
    
    We claim that also $\mathcal{W}_{p_1}(\mu_{n_k},\mu) \to 0$. This can be easily seen from the fact (see \cite[Theorem 7.12]{villani_topics_2003}) that $\mathcal{W}_{p_1}(\mu_{n_k},\mu)\to 0$ if and only if
    \begin{enumerate}
        \item[(1).] $\mu_{n_k}\to\mu$ in weak convergence;
        \item[(2).]  $
        \displaystyle\lim_{R\to\infty}\limsup_{k\to\infty}\int_{|x|\geq R}|x|^{p_1} \mu_{n_k}(dx) = 0.$
    \end{enumerate}
    Take a $L^{p_1}$-random variable $X^{n_k}$ with its law $\mathcal{L}(X^{n_k})=\mu_{n_k}$, the second condition in the above follows from the fact that 
    \begin{align*}
        \int_{|x|\geq R}|x|^{p_1} \mu_{n_k}(dx) = \mathbb{E}\Big[|X^{n_k}|^{p_1} \mathds{1}_{\{|X^{n_k}|\geq R\}}\Big]
        \leq&\left\{\mathbb{E}\Big[|X^{n_k}|^{p_2} \Big]\right\}^{p_1/p_2}
        \left\{\mathbb{E}\Big[ \mathds{1}_{\{|X^{n_k}|\geq R\}}\Big]\right\}^{1-p_1/p_2}\\
        \leq&\frac{K}{R^{p_2-p_1}},
    \end{align*}
    hence $V_K^{p_1,p_2}$ is compact in $(\mathcal{P}_{p}(\mathbb{R}^d),\mathcal{W}_{p_1})$ for any $p\in[p_1,p_2]$.
\end{proof}
\begin{remark}
\label{V_lower_semi}
Note that if $\mu_n,\mu\in\mathcal{P}_2(\mathbb{R}^d)$ for any $n \in \mathbb{N}$ such that $\mu_n\to\mu$ in the $\mathcal{W}_1$-topology as $n \to \infty$, then $\mu_n$ weakly converges to $\mu$, and hence with $\mathcal{L}(X_n) = \mu_n$, $\mathcal{L}(X) = \mu$, it holds that $\liminf_n\mathbb{E}f(X_n)\geq\mathbb{E}f(X)$, for all nonnegative and continuous functions $f$. In particular, for any $p \geq 1$ we choose $f(x): = |x|^p$ and see that $\liminf_n  M_p(\mu_n) \geq  M_p(\mu)$, i.e., $ M_p$ is lower semicontinuous with respect to $\mathcal{W}_1$.     
\end{remark}

 In the rest of this subsection, we fix a rich enough probability space $(\Omega, \mathcal{F},\mathbb{P})$ such that it supports all probability laws on $\mathbb{R}^d$, i.e., for any probability law $\mu$ on $\mathbb{R}^d$, there exists $X :\Omega \to \mathbb{R}^d $ such that the law of $X$, denoted by $\mathcal{L}(X)$, is $\mu$. We recall the calculus in the Wasserstein space. For a function $f:[0,T]\times\mathcal{P}_2(\mathbb{R}^d)\to\mathbb{R}$, we adopt the notion of $L$-derivatives (see \cite{lions_annals} for instance) which are recalled as follows:
	\begin{definition}
		The function $f:[0,T]\times\mathcal{P}_2(\mathbb{R}^d)\to\mathbb{R}$ is said to be first-order $L$-differentiable if its lifting $F:[0,T]\times L^2(\Omega,\mathcal{F},\mathbb{P};\mathbb{R}^d)\to\mathbb{R}$; $F(t,\xi) := f(t,\mathcal{L}(\xi)) $ admits a continuous Fr\'echet derivative $D_\xi F :[0,T] \times L^2(\Omega,\mathcal{F},\mathbb{P};\mathbb{R}^d) \to L^2(\Omega,\mathcal{F},\mathbb{P};\mathbb{R}^d)$. 
        \label{def. 1st L d}
	\end{definition}
	
	\begin{remark}
		By \cite[Proposition 5.25]{carmona_probabilistic_2018_vol1}, if $f$ is first-order $L$-differentiable, then it can be shown that there is a measurable function, denoted by $\partial_\mu f(t,\mu)(\cdot) :\mathbb{R}^d \to \mathbb{R}^d$, such that $D_\xi F(t,\xi)=\partial_\mu f(t,\mu)(\xi)$ for any $(t,\mu) \in [0,T] \times \mathcal{P}_2(\mathbb{R}^d)$ and $\xi \in L^2(\Omega,\mathcal{F},\mathbb{P};\mathbb{R}^d)$ with $\mathcal{L}(\xi)=\mu$. We say that $\partial_\mu f:[0,T]\times \mathcal{P}_2(\mathbb{R}^d)\times\mathbb{R}^d \to\mathbb{R}^d$ is the first-order $L$-derivative of $f$.
		\label{169}
	\end{remark}

	\begin{definition}
		The function $f:[0,T]\times\mathcal{P}_2(\mathbb{R}^d)\to\mathbb{R}$ is said to be second-order $L$-differentiable if $f$ is first-order $L$-differentiable and for any $x \in \mathbb{R}^d$, the function $\mu\mapsto\partial_\mu f(t,\mu)(x)$ is $L$-differentiable, i.e., the lifting $F':L^2(\Omega,\mathcal{F},\mathbb{P};\mathbb{R}^d)\to\mathbb{R}^d$ of $\mu\mapsto\partial_\mu f (t,\mu)(x)$ admits a continuous Fr\'echet derivative $D_\xi F' :[0,T] \times L^2(\Omega,\mathcal{F},\mathbb{P};\mathbb{R}^d) \to L^2(\Omega,\mathcal{F},\mathbb{P};\mathbb{R}^{d\times d})$. 
               \label{def. 2nd L d}
	\end{definition}
	\begin{remark}
		Similarly, if $f$ is second-order $L$-differentiable, then there is a measurable function, denoted by $\partial_\mu^2 f(t,\mu)(x,\cdot) :\mathbb{R}^d \to \mathbb{R}^{d\times d}$, such that $D_\xi F'(t,\xi')=\partial_\mu^2 f(t,\mu)(x,\xi')$ for any $(t,\mu,x) \in [0,T] \times \mathcal{P}_2(\mathbb{R}^d)\times \mathbb{R}^d$ and $\xi' \in L^2(\Omega,\mathcal{F},\mathbb{P};\mathbb{R}^d)$ with $\mathcal{L}(\xi')=\mu$. We say that $\partial^2_\mu f:[0,T]\times \mathcal{P}_2(\mathbb{R}^d)\times\mathbb{R}^d\times\mathbb{R}^d \to\mathbb{R}^{d\times d}$ is the second-order $L$-derivative of $f$.
	\end{remark}

	\subsection{Problem Formulation}
	\label{strong_form_2}
	
This article aims to provide the proof of existence and uniqueness of viscosity solution of the HJB equation \begin{align*}
			\left\{\begin{aligned}
				&\partial_t u(t,\mu)+\int_{\mathbb{R}^d}
				\sup_{a\in A} \Bigg\{f(t,x,\mu,a)+b(t,x,\mu,a)\cdot \partial_\mu u(t,\mu)(x) \\
				&\h{100pt}+ \dfrac{1}{2}\text{tr}\Big((\sigma(t,x,a)\big[\sigma(t,x,a)\big]^\top+\sigma^0(t,x)[\sigma^0(t,x)]^\top)\nabla_x\partial_\mu u(t,\mu)(x)\Big)\Bigg\}\mu(dx) \\
				&+ \dfrac{1}{2} \int_{\mathbb{R}^d\times\mathbb{R}^d}\text{tr}\Big[\sigma^0(t,x)[\sigma^0(t,y)]^\top\partial_\mu^2
    u(t,\mu)(x,y)\Big] \, \mu^{\otimes 2}(dx,dy)=0,\h{5pt} \text{for any $(t,\mu) \in [0,T) \times\mathcal{P}_2(\mathbb{R}^d)$};\\
				&u(T,\mu)=\int_{\mathbb{R}^d}g(x,\mu)\mu(dx) \h{10pt}\text{for $\mu\in \mathcal{P}_2(\mathbb{R}^d)$},
			\end{aligned}\right.
		\end{align*} with the coefficient functions 
	\begin{align*}
		b&:[0,T]\times\mathbb{R}^d\times\mathcal{P}_2(\mathbb{R}^d)\times A\to \mathbb{R}^d,&
		\sigma&:[0,T]\times\mathbb{R}^d\times A\to\mathbb{R}^{d\times d},&
		\sigma^0&:[0,T]\times\mathbb{R}^d \to\mathbb{R}^{d\times d},\\
		f&:[0,T]\times\mathbb{R}^d\times\mathcal{P}_2(\mathbb{R}^d)\times A\to\mathbb{R},&
		g&:\mathbb{R}^d\times\mathcal{P}_2(\mathbb{R}^d)\to\mathbb{R},
	\end{align*}
	where $A$ is a compact subset of the Euclidean space $\mathbb{R}^d$ equipped with the distance $d_A$. The coefficient functions satisfy the following assumptions:
	\begin{assumption}\label{assume:A}
The functions $b$, $\sigma$, $\sigma^0$, $f$ and $g$ are jointly continuous in $(t,x, \mu,a)\in ([0,T],|\cdot|) \times (\mathbb{R}^d,|\cdot|)\times (\mathcal{P}_{2}(\mathbb{R}^d),\mathcal{W}_1)\times  (A,|\cdot|)$. Moreover, there exist constants $K\geq 0$, $\rho \in[0,1)$ and $\beta \in (0,1]$ such that for any $a\in A$, $(t,x,\mu)$, $(t',x',\mu')\in[0,T]\times\mathbb{R}^d\times\mathcal{P}_2(\mathbb{R}^d)$, it holds that
			\begin{align*}
			{\it (1)}.\,\,&\big|b(t,x,\mu,a)-b(t',x',\mu',a)\big|+\big|\sigma(t,x,a) - \sigma(t',x',a)\big|+\big|\sigma^0(t,x)-\sigma^0(t',x')\big|\\
				&+\big|f(t,x,\mu,a)-f(t',x',\mu',a)\big|+\big|g(x,\mu) - g(x',\mu')\big|
				\leq K\big[|x-x'|+|t-t'|^\beta+\mathcal{W}_1(\mu,\mu')\big];\\[3mm]
				{\it (2)}.\,\,&\big|b(t,x,\mu,a)\big|+\big|\sigma(t,x,a)\big|+\big|\sigma^0(t,x)\big|\leq K(1+|x|^\rho);\\[3mm]
    {\it (3)}.\,\,&\big|f(t,x,\mu,a)\big|+\big|g(x,\mu)\big|\leq K.
			\end{align*}
	\end{assumption}
 
 \begin{assumption}\label{assume:B}
   For any $a \in A$, the functions $\sigma(\cdot, \cdot, a)$ and $\sigma^0(\cdot,\cdot)$ belong to $C^{1,2}\left([0, T] \times \mathbb{R}^d\right)$. Moreover, there exists a constant $K \geq 0$ such that
		\begin{align*}
			\left|\partial_t \sigma(t, x, a)\right|+\left|\nabla_x \sigma(t, x, a)\right|+\big|\nabla_{xx}^2 \sigma(t, x, a)\big|+\left|\partial_t \sigma^0(t,x)\right|+\left|\nabla_x \sigma^0(t,x)\right|+\left|\nabla_{xx}^2 \sigma^0(t,x)\right| \leq K,
		\end{align*}
		for all $(t, x, a) \in[0, T] \times \mathbb{R}^d \times A$.
	\end{assumption}
  \begin{remark}
     The growth rate condition $\rho \in [0,1)$ in Assumption~\ref{assume:A} will be used for proving the uniqueness of viscosity solution in Theorem~\ref{thm compar}. Specifically, when controlling the error term of the perturbation of solution in \eqref{1271}, we are unable to achieve a sufficient decay rate if $\rho = 1$. Furthermore, we impose the $\mathcal{W}_1$-Lipschitz continuity to obtain better convergence results in the finite-dimensional particle approximation in Lemma \ref{lem estimate of b^i_n,m...}. The $\mathcal{W}_2$-Lipschitz continuity is insufficient, as noted in \cite[Remark 2.5]{cheung2023viscosity}. See also the second part of Remark \ref{rmk. W1 cts for test functions} for a reason on the compactness issue.
 \end{remark}
 Before giving the definition of viscosity solution, we introduce the set of test functions adopted in the article.
\begin{definition}
		The set $C^{1,2}([0,T]\times \mathcal{P}_2(\mathbb{R}^d))$ consists of all $([0,T],|\cdot|) \times (\mathcal{P}_{2}(\mathbb{R}^d),\mathcal{W}_2)$ continuous functions $\varphi:[0,T]\times \mathcal{P}_2(\mathbb{R}^d) \to \mathbb{R}$ satisfying the following:
		\begin{enumerate}[(1).]
			
			\item the derivatives $\partial_t \varphi(t,\mu)$, $\partial_\mu \varphi(t,\mu)(x)$, $\nabla_x \partial_\mu \varphi (t,\mu)(x)$, $\partial_\mu^2 \varphi(t,\mu)(x,x')$ exist and are jointly continuous in $(t, \mu, x, x')\in ([0,T],|\cdot|) \times (\mathcal{P}_{2}(\mathbb{R}^d),\mathcal{W}_2)\times (\mathbb{R}^d,|\cdot|)\times (\mathbb{R}^d,|\cdot|)$;
			\item there is a constant $C_\varphi\geq 0$ such that for any $(t,\mu,x,x') \in [0,T] \times \mathcal{P}_2(\mathbb{R}^d)\times \mathbb{R}^d\times \mathbb{R}^d$, we have
\begin{align*}			
     |\p_t \varphi(t,\mu)|+\pig|\partial_\mu^2 \varphi(t,\mu)(x,x')\pig|+\pig|\nabla_x\partial_\mu \varphi(t,\mu)(x)\pig|\leq C_\varphi\quad \text{and }\quad
     \pig|\partial_\mu \varphi(t,\mu)(x)\pig|\leq C_\varphi\pig(1+|x|\pig).
\end{align*}
		\end{enumerate}
  \label{def. of C1,2}
	\end{definition}
 We now introduce the notion of viscosity solution used in this article. The subsolution part follows the standard Crandall-Lions' definition. However, for equations involving the mean field term, we have to modify the supersolution part.

 \begin{definition}
 \label{vis_def}
		A bounded $([0,T],|\cdot|) \times (\mathcal{P}_{2}(\mathbb{R}^d),\mathcal{W}_1)$-continuous function $u:[0, T] \times \mathcal{P}_2(\mathbb{R}^d) \rightarrow \mathbb{R}$ is called a viscosity subsolution of equation \eqref{HJB_intro} if
		\begin{enumerate}
			\item[(1a).] $u(T, \mu) \leq \displaystyle\int_{\mathbb{R}^d} g(x, \mu) \mu(d x)$, for every $\mu \in \mathcal{P}_2(\mathbb{R}^d) ;$
			\item[(1b).] for any $(t, \mu) \in[0, T) \times \mathcal{P}_2(\mathbb{R}^d)$ and $\varphi \in C^{1,2}([0,T]\times\mathcal{P}_2(\mathbb{R}^d))$ such that $u-\varphi$ attains a maximum with a value of $0$ at $(t, \mu)$ over $[0,T]\times\mathcal{P}_2(\mathbb{R}^d)$, the first equation of \eqref{HJB_intro} holds  with the inequality sign $\geq$ replacing the equality sign and with $\varphi$ replacing  $u$.
		\end{enumerate}
  For any function $h:\mathcal{P}_2(\mathbb{R}^d)\to \mathbb{R}$, we extend it to a function on $\mathcal{P}_2(\mathbb{R}^d\times A)$ by $h(\nu) = h(\mu)$, with $\mu$ being the marginal distribution of $\nu$ on $\mathbb{R}^d$ such that $\nu(\cdot \times A)=\mu(\cdot)$. We define the respective projections $\pi_d : \mathbb{R}^{2d} \to \mathbb{R}^{d}$ and $\pi_{d \times d} : \mathbb{R}^{2d \times 2d} \to \mathbb{R}^{d \times d}$ such that for any $y=(y_1,y_2,\ldots,y_{2d})^\top \in \mathbb{R}^{2d}$ and $M\in\mathbb{R}^{2d \times 2d}$, it holds that $\pi_d(y)=(y_1,y_2,\ldots,y_{d})^\top$ and $(\pi_{d \times d}(M))_{ij}=M_{ij}$ for $i,j=1,\dots, d$. A bounded $([0,T],|\cdot|) \times (\mathcal{P}_{2}(\mathbb{R}^d),\mathcal{W}_1)$-continuous function $u:[0, T] \times \mathcal{P}_2(\mathbb{R}^d) \rightarrow \mathbb{R}$ is called a viscosity supersolution of equation \eqref{HJB_intro} if
		\begin{enumerate}
			\item[(2a).] $u(T, \mu) \geq \displaystyle\int_{\mathbb{R}^d} g(x, \mu) \mu(d x)$, for every $\mu \in \mathcal{P}_2(\mathbb{R}^d) ;$
			\item[(2b).] for any $s\in (0,T]$, $(t,\nu)\in [0,s)\times\mathcal{P}_2(\mathbb{R}^d\times A)$ and $\varphi \in C^{1,2}([0,s]\times\mathcal{P}_2(\mathbb{R}^d\times A))$ such that $u-\varphi$ attains a minimum with a value of $0$ at $(t,\nu)$ over $[0,s]\times\mathcal{P}_2(\mathbb{R}^d\times A)$, the following inequality holds:
\begin{align}
 \label{HJB_supersolution}
				&\partial_t \varphi(t,\nu)+\int_{\mathbb{R}^d\times A}
				 \Bigg\{f(t,x,\mu,a)+b(t,x,\mu,a)\cdot \partial_\mu \varphi(t,\nu)(x,a) \nonumber\\
				&\h{40pt}+ \dfrac{1}{2}\textup{tr}\Big((\sigma(t,x,a)\big[\sigma(t,x,a)\big]^\top+\sigma^0(t,x)[\sigma^0(t,x)]^\top)\nabla_x\partial_\mu \varphi(t,\mu)(x,a)\Big)\Bigg\}\nu(dx,da)\nonumber\\
&+ \dfrac{1}{2} \int_{\mathbb{R}^d\times A\times\mathbb{R}^d\times A}\textup{tr}\Big[\sigma^0(t,x)[\sigma^0(t,y)]^\top\partial_\mu^2
    \varphi(t,\mu)(x,a,y,\alpha)\Big] \, \nu^{\otimes 2}(dx,da,dy,d\alpha)\nonumber\\
				&\leq 0, 
		\end{align}
    where $\mu$ is the marginal distribution of $\nu$ on $\mathbb{R}^d$, $u(t,\nu)$ is defined as $u(t,\mu)$, $\partial_\mu \varphi(t,\nu)(\cdot,\cdot):=\pi_d(\partial_\nu \varphi(t,\nu)(\cdot,\cdot))$, and $\partial_\mu^2 \varphi(t,\nu)(\cdot,\cdot):=\pi_{d\times d}(\partial_\nu \varphi(t,\nu)(\cdot,\cdot))$. Here $\partial_\nu \varphi(t,\nu)(\cdot,\cdot):\mathbb{R}^d\times A\to\mathbb{R}^{2d}$ and $\partial^2_\nu \varphi(t,\nu)(\cdot,\cdot,\cdot,\cdot):\mathbb{R}^d\times A\times\mathbb{R}^d\times A\to\mathbb{R}^{2d\times 2d}$ are similarly defined as in Definitions \ref{def. 1st L d}, \ref{def. 2nd L d}, respectively, but over $\mathbb{R}^d \times A$.
\end{enumerate}
A function $u:[0, T] \times \mathcal{P}_2(\mathbb{R}^d) \rightarrow \mathbb{R}$ is called a viscosity solution of \eqref{HJB_intro} if it is both a viscosity subsolution and a viscosity supersolution.
		\label{def. of vis sol}
	\end{definition}
\begin{remark}\label{rmk modified supersol}
     As noted in \cite[Remark 6.1]{cheung2023viscosity}, conditions (2a)-(2b) in Definition \ref{def. of vis sol} provide sufficient criteria for the standard Crandall-Lions' definition of a supersolution. This formulation is adopted because, in proving the comparison theorem for viscosity supersolutions, there are technical difficulties that the Crandall-Lions' definition alone cannot resolve. For further details, see \cite[Remark 6.2]{cheung2023viscosity}.  It is important to note that this modification is required only for the comparison theorem; the value function can still be shown to be a viscosity solution to the HJB equation \eqref{HJB_intro} under the standard Crandall-Lions' definition, using essentially the same proof as in Theorem \ref{existence_label}.
\end{remark}
\begin{remark}
There are two key reasons for adopting the $\mathcal{W}_1$-Lipschitz continuity in Definition \ref{def. of vis sol}. First, as highlighted in \cite[Remark 2.5]{cheung2023viscosity}, to apply the finite-dimensional particle approximation of the value function $v$ in the comparison theorem, it is inevitable to assume the $\mathcal{W}_1$-Lipschitz continuity of the coefficient functions in Assumption \ref{assume:A}. This assumption ensures that the value function is also $\mathcal{W}_1$-Lipschitz continuity (see Lemma \ref{v_W1_continuous}). Hence, it is natural to seek candidate solutions among $\mathcal{W}_1$-continuous functions. Second, the $\mathcal{W}_1$-continuity provides a suitable compactness property, replacing the smooth variational principle to guarantee the existence of extrema for functions in the Wasserstein space.  To effectively apply the smooth variational principle, a gauge function that matches the order of the HJB equations is required. However, constructing a gauge function that is smooth up to second-order is challenging, which complicates the extension of viscosity solutions to fully nonlinear second-order HJB equations. Fortunately, this compactness property allows us to bypass the need for a second-order \( L \)-differentiable gauge function.
\label{rmk. W1 cts for test functions}
\end{remark}
\subsection{Associated Mean Field Control Problems}\label{sec. Associated Mean Field Control Problems}
 We consider a probability space \( (\Omega, \mathcal{F}, \mathbb{P}) \) structured as \( (\Omega^0 \times \Omega^1, \mathcal{F}^0 \otimes \mathcal{F}^1, \mathbb{P}^0 \otimes \mathbb{P}^1) \). The space \( (\Omega^0, \mathcal{F}^0, \mathbb{P}^0) \) supports a $d$-dimensional Brownian motion \( W^0 \), which represents the common noise. The space \( (\Omega^1, \mathcal{F}^1, \mathbb{P}^1) \) is of the form \( (\tilde{\Omega}^1 \times \hat{\Omega}^1, \mathcal{G} \otimes \hat{\mathcal{F}}^1, \tilde{\mathbb{P}}^1 \otimes \hat{\mathbb{P}}^1) \). On \( (\hat{\Omega}^1, \hat{\mathcal{F}}^1, \hat{\mathbb{P}}^1) \), there is a $d$-dimensional Brownian motion \( W \), which represents the idiosyncratic noise. Meanwhile, \( (\tilde{\Omega}^1, \mathcal{G}, \tilde{\mathbb{P}}^1) \) supports the initial random variables. We further assume, without loss of generality, that the probability space \( (\tilde{\Omega}^1, \mathcal{G}, \tilde{\mathbb{P}}^1) \) is rich enough to support any probability law on \( \mathbb{R}^d \). Specifically, for any probability measure \( \mu \) on \( \mathbb{R}^d \), there exists a random variable \( X(\omega) :\tilde{\Omega}^1 \to \mathbb{R}^d \) such that the distribution of \( X \), denoted by \( \mathcal{L}(X) \), is \( \mu \).

We represent \( \omega \in \Omega \) as \( \omega = (\omega^0, \omega^1) \), where the Brownian motions are given by \( W(\omega) = W(\omega^1) \) and \( W^0(\omega) = W^0(\omega^0) \). Let \( \mathbb{E} \) denote the expectation under \( \mathbb{P} \), while \( \mathbb{E}^0 \) and \( \mathbb{E}^1 \) represent the expectations under \( \mathbb{P}^0 \) and \( \mathbb{P}^1 \), respectively. We define the filtrations as follows:
\begin{itemize}
    \item \( \mathbb{F} = (\mathcal{F}_s)_{s \geq 0} := (\sigma(W^0_r)_{0 \leq r \leq s} \vee \sigma(W_r)_{0 \leq r \leq s} \vee \mathcal{G})_{s \geq 0} \);
    \item \( \mathbb{F}^t = (\mathcal{F}_s^t)_{s \geq 0} := (\sigma(W^0_{r \vee t} - W_t^0)_{0 \leq r \leq s} \vee \sigma(W_{r \vee t} - W_t)_{0 \leq r \leq s} \vee \mathcal{G})_{s \geq 0} \);
    \item \( \mathbb{F}^{W^0} = (\mathcal{F}_s^{W^0})_{s \geq 0} := (\sigma(W_r^0))_{0 \leq r \leq s} \);
    \item \( \mathbb{F}^1 = (\mathcal{F}_s^1)_{s \geq 0} := (\sigma(W_s) \vee \mathcal{G})_{0 \leq r \leq s} \).
\end{itemize}
For simplicity, we assume they are \( \mathbb{P} \)-complete.

Recall that \( A \subset \mathbb{R}^d\) is a compact subset equipped with the distance \( d_A \). For \( t > 0 \), let \( \mathcal{A} \) (resp. \( \mathcal{A}_t \)) denote the set of \( \mathbb{F} \)-progressively measurable (resp. \( \mathbb{F}^t \)-progressively measurable) processes on \( \Omega \) that take values in \( A \). The set \( \mathcal{A} \) (resp. \( \mathcal{A}_t \)) is a separable metric space with the Krylov distance defined as \( \Delta(\alpha, \beta) := \mathbb{E}\left[\int_0^T d_A(\alpha_r, \beta_r) \, dr\right] \) (resp. \( \Delta_t(\alpha, \beta) := \mathbb{E}\left[\int_t^T d_A(\alpha_r, \beta_r) \, dr\right] \)). We denote by \( \mathcal{B}_{\mathcal{A}} \) (resp. \( \mathcal{B}_{\mathcal{A}_t} \)) the Borel \( \sigma \)-algebra on \( \mathcal{A} \) (resp. \( \mathcal{A}_t \)). Without loss of generality, let \( (\Omega^0, \mathcal{F}^0, \mathbb{P}^0) \) be the canonical space, i.e., \( \Omega^0 = C(\mathbb{R}_{+}, \mathbb{R}^d) \), the space of continuous functions from \( \mathbb{R}^+ \) to \( \mathbb{R}^d \).

We now introduce a mean field control problem related to the HJB equation \eqref{HJB_intro}. For every $t\in[0,T]$, $\xi\in L^2(\Omega^1,\mathcal{F}_t^1,\mathbb{P}^1;\mathbb{R}^d)$ and $\alpha \in \mathcal{A}_t$, we consider the solution $X^{t,\xi,\alpha}$ of the following state dynamics:
	\begin{align}
		\label{dynamics}
		X_s &= \xi + \int_{t}^s b(r,X_{r},\mathbb{P}_{X_{r}}^{W^0},\alpha_r)dr + \int_t^s \sigma(r,X_{r},\alpha_r)dW_r + \int_t^s \sigma^0(r,X_{r})dW^0_r,\h{5pt} \text{ for $s \in [t,T]$,}
	\end{align}
	where $\mathbb{P}_{X_{r}}^{W^0}$ denotes the conditional distribution of $X_{r}$ given $W^0$. We are subject to the cost functional:
	\begin{align}
		J(t,\xi,\alpha) := \mathbb{E}\Bigg[\int_t^T f\Big(s,X_s^{t,\xi,\alpha},\mathbb{P}^{W^0}_{X_s^{t,\xi,\alpha}},\alpha_s\Big)ds + g\Big(X_T^{t,\xi,\alpha},\mathbb{P}_{X_T^{t,\xi,\alpha}}^{W^0}\Big)\Bigg].
	\end{align}
    We define the value function $V$ to be
	\begin{align}
		V(t,\xi):= \sup_{\alpha\in \mathcal{A}_t}J(t,\xi,\alpha),\quad\text{for any }(t,\xi)\in [0,T]\times L^2(\Omega^1,\mathcal{F}_t^1,\mathbb{P}^1;\mathbb{R}^d).
		\label{def. value function with xi}
	\end{align}
 As in \cite[Appendix B]{cosso2023optimal} and \cite[Proposition 3.3]{cheung2023viscosity}, it can be shown that the value function is law invariant under Assumption \ref{assume:A}, i.e., for every $t \in [0,T]$ and $\xi$, $\eta\in L^2({\Omega}^1,\mathcal{F}^1_t,\mathbb{P}^1;\mathbb{R}^d)$, with $\mathcal{L}(\xi) = \mathcal{L}(\eta)$, it holds that
$
			V(t,\xi) = V(t,\eta)$. 
Therefore we can define a function $v(t,\mu):[0,T]\times\mathcal{P}_2(\mathbb{R}^d) \mapsto \mathbb{R}$ such that for any $t\in [0,T]$ and $\mu \in \mathcal{P}_2(\mathbb{R}^d)$, it holds that
	\begin{align}
		\label{value_function_after_law_invariance}
		v(t,\mu) := V(t,\xi),
	\end{align}
     for any $\xi\in L^2(\Omega^1,\mathcal{F}_t^1,\mathbb{P}^1;\mathbb{R}^d)$ such that $\mathcal{L}(\xi)=\mu$, where $V(t,\xi)$ is defined in \eqref{def. value function with xi}. The main goal of this article is to prove that the value function $v(t,\mu)$ is the unique viscosity solution to the HJB equation \eqref{HJB_intro}, under Definition \ref{def. of vis sol}. To address this, we need some preliminary results. We first give the regularity of the solution of the SDE \eqref{dynamics} and the value function $v$. Proofs are standard and therefore omitted, and readers are referred to \cite{yong1999stochastic}.
		\begin{prop}
  \label{prop. property of X}
		Suppose that Assumption \ref{assume:A} holds. For every $t\in [0,T]$, $\xi\in L^2(\Omega^1,\mathcal{F}^1_t,\mathbb{P}^1;\mathbb{R}^d)$ and $\alpha\in\mathcal{A}_t$, there exists a unique (up to $\mathbb{P}$-indistinguishability) continuous $\mathbb{F}$-progressively measurable solution $X^{t,\xi,\alpha} = (X^{t,\xi,\alpha}_s)_{s\in [t,T]}$ of equation (\ref{dynamics}). Moreover, there is a constant $C$ depending only on $K$, $T$, $d$ such that
			\begin{align*}
&\mathbb{E}\Big[\sup_{s\in[t,T]}|X_s^{t,\xi,\alpha}|^2\Big]\leq C \pig(1+\mathbb{E}|\xi|^2\pigr),\quad
\mathbb{E}\Big[\sup_{s\in [t,T]}|X_{s}^{t,\xi,\alpha}-X_{s}^{t,\xi',\alpha}|^2\Big]\leq C \mathbb{E}\pig(|\xi-\xi'|^2\pigr)\quad\text{and}\\
&\mathbb{E}\Big[\sup_{s\in[t,t+h]}|X^{t,\xi,\alpha}_s-\xi|^2\Big]\leq C h,
		\end{align*}
for any $t\in [0,T]$, $h \in [0,T-t]$, $\xi,\xi'\in L^2(\Omega^1,\mathcal{F}^1_t,\mathbb{P}^1;\mathbb{R}^d)$ and $\alpha\in\mathcal{A}_t$. 
	\end{prop}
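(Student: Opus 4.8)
The statement to prove is Proposition~\ref{prop. property of X}, which asserts existence, uniqueness, and three a priori estimates for the conditional McKean--Vlasov SDE \eqref{dynamics}. I will treat this as a fixed-point argument in the space of continuous $\mathbb{F}$-progressively measurable processes on $[t,T]$ with finite $\mathbb{E}[\sup_s|\cdot|^2]$.

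\medskip

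\textbf{Proof sketch.} The plan is to set up a Picard iteration / Banach fixed-point argument on the Banach space $\mathcal{S}^2_{[t,T]}$ of continuous $\mathbb{F}$-progressively measurable $\mathbb{R}^d$-valued processes $Y$ with norm $\|Y\|^2 := \mathbb{E}[\sup_{s\in[t,T]}|Y_s|^2]$. For a candidate process $Y$ define its conditional law flow $\mu^Y_r := \mathbb{P}^{W^0}_{Y_r}$ (the regular conditional distribution of $Y_r$ given $W^0$, which exists since $\Omega^0$ is a standard Borel space), and define the map $\Phi(Y)$ to be the unique solution of the \emph{linear} SDE obtained by freezing $\mu^Y$ in the drift: $\Phi(Y)_s = \xi + \int_t^s b(r,\Phi(Y)_r,\mu^Y_r,\alpha_r)\,dr + \int_t^s \sigma(r,\Phi(Y)_r,\alpha_r)\,dW_r + \int_t^s \sigma^0(r,\Phi(Y)_r)\,dW^0_r$. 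Standard SDE theory (Lipschitz plus linear growth in $x$ from Assumption~\ref{assume:A}, noting $\rho\le 1$ gives at most linear growth) yields that $\Phi$ is well-defined and maps $\mathcal{S}^2_{[t,T]}$ into itself. I would then show $\Phi$ is a contraction on $[t,t+h_0]$ for $h_0$ small depending only on $K,T,d$: for $Y,Y'$, the difference $\Phi(Y)-\Phi(Y')$ satisfies a linear SDE whose inhomogeneity involves $b(r,\cdot,\mu^Y_r,\cdot)-b(r,\cdot,\mu^{Y'}_r,\cdot)$; using the $\mathcal{W}_1$-Lipschitz bound on $b$ and the elementary estimate $\mathbb{E}[\mathcal{W}_1(\mu^Y_r,\mu^{Y'}_r)^2]\le \mathbb{E}[\mathcal{W}_1(\mathbb{P}^{W^0}_{Y_r},\mathbb{P}^{W^0}_{Y'_r})^2]\le \mathbb{E}|Y_r-Y'_r|^2$ (coupling through the common conditioning), together with Burkholder--Davis--Gundy and Gronwall, gives $\|\Phi(Y)-\Phi(Y')\|_{[t,t+h_0]}\le \tfrac12\|Y-Y'\|_{[t,t+h_0]}$. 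Iterating over successive intervals of length $h_0$ (the constant $h_0$ being uniform) patches a unique global solution on $[t,T]$.

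\medskip

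\textbf{The estimates.} Once existence/uniqueness is in hand, the three bounds are routine moment estimates. For the first, apply It\^o to $|X_s|^2$, take $\sup$ and expectation, use BDG on the martingale parts, the linear growth bound $|b|+|\sigma|+|\sigma^0|\le K(1+|x|^\rho)\le K(1+|x|)$ from Assumption~\ref{assume:A}(2) (here $\rho\le 1$ is exactly what keeps this linear), and close with Gronwall to get $\mathbb{E}[\sup_{s\in[t,T]}|X_s^{t,\xi,\alpha}|^2]\le C(1+\mathbb{E}|\xi|^2)$, with $C=C(K,T,d)$. For the second, write the SDE for $\Delta_s := X_s^{t,\xi,\alpha}-X_s^{t,\xi',\alpha}$; the key point is that the measure arguments are $\mathbb{P}^{W^0}_{X_r^{t,\xi,\alpha}}$ and $\mathbb{P}^{W^0}_{X_r^{t,\xi',\alpha}}$, and $\mathbb{E}[\mathcal{W}_1(\mathbb{P}^{W^0}_{X_r^{t,\xi,\alpha}},\mathbb{P}^{W^0}_{X_r^{t,\xi',\alpha}})^2]\le \mathbb{E}|\Delta_r|^2$ via the natural coupling, so the $\mathcal{W}_1$-Lipschitz continuity of $b$ turns the mean-field term into a $\mathbb{E}|\Delta_r|^2$ contribution; Lipschitz continuity of $\sigma,\sigma^0$ in $x$ handles the diffusion terms, and Gronwall gives $\mathbb{E}[\sup_{s}|\Delta_s|^2]\le C\mathbb{E}|\xi-\xi'|^2$. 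For the third, for $s\in[t,t+h]$ bound $\mathbb{E}[\sup|X_s-\xi|^2]$ directly by $3$ times the expected squared sizes of the drift integral ($\le h^2\cdot K^2\mathbb{E}(1+\sup|X|)^2\le Ch^2$ using the first estimate), the $\sigma$-stochastic integral ($\le Ch$ by BDG and the first estimate), and the $\sigma^0$-stochastic integral ($\le Ch$ likewise), yielding $\le Ch$ for $h\le T$.

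\medskip

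\textbf{Main obstacle.} Everything here is ``standard'' as the statement says, so the only genuinely non-routine point is handling the \emph{conditional} law $\mathbb{P}^{W^0}_{X_r}$ rather than an unconditional one: one must verify that $r\mapsto \mathbb{P}^{W^0}_{X_r}$ is a well-defined, measurable (in $\omega^0$) flow in $\mathcal{P}_2(\mathbb{R}^d)$ so that $b(r,X_r,\mathbb{P}^{W^0}_{X_r},\alpha_r)$ is a bona fide progressively measurable integrand, and that the comparison inequality $\mathbb{E}[\mathcal{W}_1(\mathbb{P}^{W^0}_{Y_r},\mathbb{P}^{W^0}_{Y'_r})^2]\le \mathbb{E}|Y_r-Y'_r|^2$ holds --- this follows because $(Y_r,Y'_r)$ is itself an admissible coupling of the two conditional laws on the $\omega^1$-fibre, and one integrates over $\omega^0$. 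Since the problem explicitly defers these points to \cite{yong1999stochastic} and standard McKean--Vlasov theory (e.g.\ \cite{carmona_probabilistic_2018_vol1}), I would state them as lemmas cited from the literature rather than reproving them, and present only the Picard/Gronwall skeleton above.
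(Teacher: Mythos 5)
The paper omits this proof, calling it ``standard'' and citing \cite{yong1999stochastic}; your Picard/Banach fixed-point argument with BDG and Gr\"onwall estimates, together with the coupling inequality $\mathbb{E}\big[\mathcal{W}_1(\mathbb{P}^{W^0}_{Y_r},\mathbb{P}^{W^0}_{Y'_r})^2\big]\le\mathbb{E}|Y_r-Y'_r|^2$ to absorb the conditional-law feedback, is precisely the standard argument the authors have in mind. Your proposal is correct and takes the intended approach (one small caveat worth flagging, which is really an imprecision in the paper's statement rather than in your proof: the third estimate $\mathbb{E}\big[\sup_{s\in[t,t+h]}|X_s^{t,\xi,\alpha}-\xi|^2\big]\le Ch$ inherits, as your own computation shows, a dependence on $\mathbb{E}|\xi|^2$ through the growth of $\sigma,\sigma^0$ when $\rho>0$, so $C$ there really depends on $K,T,d$ and $\mathbb{E}|\xi|^2$).
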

	\begin{prop}
		\label{property_V}
		Suppose that Assumption \ref{assume:A} holds. There exists a constant $C\geq 0$ such that for any $t,t'\in[0,T]$, $\mu$, $\mu'\in \mathcal{P}_2(\mathbb{R}^d)$,
			\begin{align}
				\label{difference_V}
				|v(t,\mu)|\leq (1+T)K\quad \text{and}\quad|v(t,\mu)-v(t',\mu')| \leq C\Big[\mathcal{W}_2(\mu,\mu')
				+|t-t'|^{1/2}\Big].
			\end{align}
			The constant $C$ depends on $d$, $K$, $T$ and independent of $t$, $t'$, $\mu$, $\mu'$.
	\end{prop}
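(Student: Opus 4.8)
The plan is to transfer the regularity estimates for the controlled SDE \eqref{dynamics} from Proposition \ref{prop. property of X} to the value function through the representation \eqref{value_function_after_law_invariance}, using the boundedness and Lipschitz structure in Assumption \ref{assume:A}. For the uniform bound, fix $(t,\mu)$ and $\xi$ with $\mathcal L(\xi)=\mu$. For any $\alpha\in\mathcal A_t$ we estimate
\[
|J(t,\xi,\alpha)|\le \mathbb E\Bigl[\int_t^T \bigl|f(s,X_s^{t,\xi,\alpha},\mathbb P^{W^0}_{X_s^{t,\xi,\alpha}},\alpha_s)\bigr|\,ds + \bigl|g(X_T^{t,\xi,\alpha},\mathbb P^{W^0}_{X_T^{t,\xi,\alpha}})\bigr|\Bigr]\le (T-t)K + K\le (1+T)K,
\]
using Assumption \ref{assume:A}(3). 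Taking the supremum over $\alpha\in\mathcal A_t$ gives $|V(t,\xi)|\le (1+T)K$, hence $|v(t,\mu)|\le (1+T)K$ by law invariance.

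For the Lipschitz estimate in the measure argument, fix $t$, and $\mu,\mu'\in\mathcal P_2(\mathbb R^d)$. Pick $\xi,\xi'\in L^2(\Omega^1,\mathcal F^1_t,\mathbb P^1;\mathbb R^d)$ realizing an optimal (or near-optimal) coupling of $\mu,\mu'$, so that $\mathbb E|\xi-\xi'|^2=\mathcal W_2^2(\mu,\mu')$ (such a coupling can be realized on the rich space $(\tilde\Omega^1,\mathcal G,\tilde{\mathbb P}^1)$). For a fixed $\alpha\in\mathcal A_t$, I would write
\[
|J(t,\xi,\alpha)-J(t,\xi',\alpha)|\le \mathbb E\Bigl[\int_t^T \bigl|f(s,X_s,\mathbb P^{W^0}_{X_s},\alpha_s)-f(s,X_s',\mathbb P^{W^0}_{X_s'},\alpha_s)\bigr|\,ds + \bigl|g(X_T,\mathbb P^{W^0}_{X_T})-g(X_T',\mathbb P^{W^0}_{X_T'})\bigr|\Bigr],
\]
where $X=X^{t,\xi,\alpha}$, $X'=X^{t,\xi',\alpha}$. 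Applying the $\mathcal W_1$-Lipschitz bound of Assumption \ref{assume:A}(1), each integrand is controlled by $K(|X_s-X_s'|+\mathcal W_1(\mathbb P^{W^0}_{X_s},\mathbb P^{W^0}_{X_s'}))$, and since $\mathcal W_1(\mathbb P^{W^0}_{X_s},\mathbb P^{W^0}_{X_s'})\le \mathbb E^1[|X_s-X_s'|\mid W^0]$ in the conditional sense, taking full expectation and using Cauchy--Schwarz bounds everything by $C\,\mathbb E[\sup_{s}|X_s-X_s'|]\le C(\mathbb E[\sup_s|X_s-X_s'|^2])^{1/2}\le C(\mathbb E|\xi-\xi'|^2)^{1/2}=C\,\mathcal W_2(\mu,\mu')$, using the stability estimate of Proposition \ref{prop. property of X}. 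Since the bound is uniform in $\alpha$, taking suprema and using $|\sup_\alpha J(t,\xi,\alpha)-\sup_\alpha J(t,\xi',\alpha)|\le \sup_\alpha|J(t,\xi,\alpha)-J(t,\xi',\alpha)|$ yields $|v(t,\mu)-v(t,\mu')|\le C\mathcal W_2(\mu,\mu')$.

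For the time regularity, fix $\mu$ and $t<t'$, and use the dynamic programming principle (or a direct comparison): given $\xi\in L^2(\Omega^1,\mathcal F^1_t,\mathbb P^1;\mathbb R^d)$ with $\mathcal L(\xi)=\mu$, one compares $V(t,\xi)$ with $V(t',\xi)$ by restricting/extending controls over $[t,t']$, using $|f|\le K$ to absorb the contribution of the extra time interval (order $K|t-t'|$), and using the flow estimate $\mathbb E[\sup_{s\in[t,t']}|X_s^{t,\xi,\alpha}-\xi|^2]\le C|t-t'|$ from Proposition \ref{prop. property of X} together with the $\mathcal W_1$-Lipschitz continuity of $f,g$ to control the displacement of the state; this produces a bound of order $|t-t'|^{1/2}$. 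Combining the measure and time estimates via the triangle inequality gives the claim with a constant depending only on $d,K,T$.

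\textbf{Main obstacle.} The only delicate point is handling the conditional laws $\mathbb P^{W^0}_{X_s}$ in the Lipschitz estimate: one must check that $\mathcal W_1(\mathbb P^{W^0}_{X_s},\mathbb P^{W^0}_{X_s'})$ is controlled (almost surely in $\omega^0$) by the conditional $L^1$-distance of $X_s$ and $X_s'$, which follows since $(X_s,X_s')$ furnishes an admissible coupling of the conditional laws; after taking $\mathbb E^0$ this is dominated by $\mathbb E|X_s-X_s'|$. This is also precisely where the $\mathcal W_1$ (rather than $\mathcal W_2$) Lipschitz hypothesis matters, as flagged in the Remark following Assumption \ref{assume:B}. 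Everything else is a routine application of Gronwall-type SDE estimates, which is why the authors omit the proof.
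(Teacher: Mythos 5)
Your proposal is correct and is exactly the ``standard'' argument the authors have in mind when they omit the proof and cite \cite{yong1999stochastic}: transfer the moment, stability, and flow estimates of Proposition~\ref{prop. property of X} to the cost functional using the boundedness and $\mathcal W_1$-Lipschitz continuity of $f,g$ from Assumption~\ref{assume:A}, the elementary inequality $|\sup_\alpha J - \sup_\alpha J'|\le\sup_\alpha|J-J'|$, and the conditional-coupling bound $\mathcal W_1(\mathbb P^{W^0}_{X_s},\mathbb P^{W^0}_{X_s'})\le\mathbb E^1[|X_s-X_s'|\,|\,W^0]$ followed by Cauchy--Schwarz to close at the $\mathcal W_2$ level; there is no meaningful difference in route. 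Two small points you would want to make explicit in a full write-up: (i) in the time estimate, the asymmetry between $\mathcal A_t$ and $\mathcal A_{t'}$ means that one of the two one-sided inequalities requires either the DPP (Theorem~\ref{dpp_thm}) or a freezing/averaging argument over the ``extra'' noise on $[t,t']$ — your parenthetical ``(or a direct comparison)'' glosses this, and since the DPP route additionally needs the already-established measure-Lipschitz bound for $v(t',\cdot)$, the order of the two halves of the argument matters; (ii) the flow estimate $\mathbb E[\sup_{s\in[t,t+h]}|X_s-\xi|^2]\le Ch$ from Proposition~\ref{prop. property of X} implicitly carries a factor $(1+\mathbb E|\xi|^2)^\rho$ coming from the growth of $b,\sigma,\sigma^0$, which you inherit as stated but which is worth keeping in mind when asserting that the final constant is independent of $\mu,\mu'$.
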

 We then state the dynamic programming principle which is essential to verify that the value function $v$ is the viscosity solution to the HJB equation \eqref{HJB_intro}.  The proof is standard and can be obtained following arguments similar to those in \cite{Pham_Wei_Dynamic_Programming}.
	\begin{theorem}
		[Dynamic Programming Principle] 
		\label{dpp_thm}  
			Suppose that Assumption \ref{assume:A} holds. The value function $v$ satisfies the dynamic programming principle: for every $t\in[0,T]$ and $\mu \in \mathcal{P}_2(\mathbb{R}^d)$, it holds that
		\begin{align*}
			v(t,\mu) = & \sup_{\alpha\in\mathcal{A}_t}\sup_{s\in[t,T]}\Bigg\{\mathbb{E}\Bigg[\int_t^s f\Big(r,X_r^{t,\xi,\alpha},\mathbb{P}^{W^0}_{X_r^{t,\xi,\alpha}},\alpha_r\Big)dr+v\pig(s,\mathbb{P}_{X^{t,\xi,\alpha}_s}^{W^0}\pig)\Bigg]\Bigg\}\\
   = & \sup_{\alpha\in\mathcal{A}_t}\inf_{s\in[t,T]}\Bigg\{\mathbb{E}\Bigg[\int_t^s f\Big(r,X_r^{t,\xi,\alpha},\mathbb{P}^{W^0}_{X_r^{t,\xi,\alpha}},\alpha_r\Big)dr+v\pig(s,\mathbb{P}_{X^{t,\xi,\alpha}_s}^{W^0}\pig)\Bigg]\Bigg\},
		\end{align*}
		for any $\xi\in L^2(\Omega^1,\mathcal{F}_t^1,\mathbb{P}^1;\mathbb{R}^d)$ such that $\mathcal{L}(\xi) = \mu$.
	\end{theorem}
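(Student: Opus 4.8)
The plan is to follow the standard dynamic-programming argument for mean field control with common noise carried out in \cite{Pham_Wei_Dynamic_Programming} (see also \cite[Appendix B]{cosso2023optimal}); the structural steps are insensitive to the state dependence of $\sigma,\sigma^0$ and to the growth allowed in Assumption \ref{assume:A}, so only the measurability bookkeeping has to be adapted. Since $\inf_{s\in[t,T]}(\cdots)\le\sup_{s\in[t,T]}(\cdots)$ pointwise, it suffices to prove two one-sided estimates simultaneously: that $v(t,\mu)$ is an \emph{upper} bound for $\sup_{\alpha\in\mathcal{A}_t}\sup_{s\in[t,T]}\mathbb{E}\big[\int_t^s f(r,X_r^{t,\xi,\alpha},\mathbb{P}^{W^0}_{X_r^{t,\xi,\alpha}},\alpha_r)\,dr+v(s,\mathbb{P}^{W^0}_{X_s^{t,\xi,\alpha}})\big]$ (super-optimality / pasting), and that $v(t,\mu)$ is a \emph{lower} bound for $\sup_{\alpha\in\mathcal{A}_t}\inf_{s\in[t,T]}\mathbb{E}\big[\int_t^s f(r,X_r^{t,\xi,\alpha},\mathbb{P}^{W^0}_{X_r^{t,\xi,\alpha}},\alpha_r)\,dr+v(s,\mathbb{P}^{W^0}_{X_s^{t,\xi,\alpha}})\big]$ (sub-optimality); together with the trivial ordering these give both claimed identities. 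I would first establish two building blocks. (a) A \emph{flow property}: for $t\le s\le T$ and $\alpha\in\mathcal{A}_t$, strong uniqueness of \eqref{dynamics} (Proposition \ref{prop. property of X}) yields $X^{t,\xi,\alpha}_r=X^{s,X^{t,\xi,\alpha}_s,\alpha}_r$ for all $r\in[s,T]$, $\mathbb{P}$-a.s., together with the fact that $\mathbb{P}^{W^0}_{X_r^{t,\xi,\alpha}}$, $r\ge s$, is a version of the $W^0$-conditional law of the process restarted at time $s$. (b) A \emph{conditioning lemma}: for fixed $\alpha$ and $s$, disintegrating the $[s,T]$-cost over (a completion of) $\mathcal{F}_s^{W^0}$ identifies the conditional continuation cost, on each conditional scenario, with the cost of a mean field control problem started at time $s$ from the random initial law $\mathbb{P}^{W^0}_{X_s^{t,\xi,\alpha}}$, hence bounded above by $v(s,\mathbb{P}^{W^0}_{X_s^{t,\xi,\alpha}})$ via the law invariance \eqref{value_function_after_law_invariance}. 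A technical point inside (b), handled exactly as in \cite{Pham_Wei_Dynamic_Programming}, is that the $\mathcal{F}_s$-measurable restart data $X^{t,\xi,\alpha}_s$ must be re-encoded, conditionally on $\mathcal{F}_s^{W^0}$, by an $\mathcal{F}_s^1$-measurable random variable carrying the prescribed random law --- possible because $(\tilde{\Omega}^1,\mathcal{G},\tilde{\mathbb{P}}^1)$ is rich enough --- and that $\alpha$ restricted to $[s,T]$ must be replaced by a control adapted to the shifted filtration producing the same conditional effect.

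Granting (a)--(b), the sub-optimality inequality is quick: pick an $\varepsilon$-optimal $\alpha^\varepsilon\in\mathcal{A}_t$ for $v(t,\mu)$, so $J(t,\xi,\alpha^\varepsilon)\ge v(t,\mu)-\varepsilon$; split the cost at any $s\in[t,T]$, take the $\mathcal{F}_s^{W^0}$-conditional expectation of the $[s,T]$-part and bound it by $v(s,\mathbb{P}^{W^0}_{X_s^{t,\xi,\alpha^\varepsilon}})$ using (b); this gives $v(t,\mu)-\varepsilon\le\mathbb{E}\big[\int_t^s f\,dr+v(s,\mathbb{P}^{W^0}_{X_s^{t,\xi,\alpha^\varepsilon}})\big]$ for every $s$, hence $v(t,\mu)-\varepsilon\le\inf_s(\cdots)\le\sup_\alpha\inf_s(\cdots)$, and we let $\varepsilon\downarrow0$. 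For the super-optimality (pasting) inequality, fix $\alpha\in\mathcal{A}_t$ and $s\in[t,T]$; a measurable selection argument produces, for each $\varepsilon>0$, a control $\beta^\varepsilon$ adapted to the appropriate shifted filtration whose $W^0$-conditional continuation cost exceeds $v(s,\mathbb{P}^{W^0}_{X_s^{t,\xi,\alpha}})-\varepsilon$ on almost every conditional scenario; by the flow property the concatenation of $\alpha$ on $[t,s]$ with $\beta^\varepsilon$ on $[s,T]$ lies in $\mathcal{A}_t$, so $v(t,\mu)\ge\mathbb{E}\big[\int_t^s f\,dr+v(s,\mathbb{P}^{W^0}_{X_s^{t,\xi,\alpha}})\big]-\varepsilon$; letting $\varepsilon\downarrow0$ and taking the supremum over $\alpha$ and $s$ finishes it.

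I expect the main obstacle to be the measurability bookkeeping concentrated in (a)--(b) and in the measurable selection: one must verify that the restarted problem is genuinely an admissible mean field control problem over $\mathcal{A}_s$ (so that the definition of $V$ applies to its random initial law), that the near-optimal continuation controls can be chosen measurably in the conditioning variable, and that the resulting concatenations are $\mathbb{F}$-progressively measurable. The estimates of Propositions \ref{prop. property of X} and \ref{property_V} --- in particular the $\mathcal{W}_2$-continuity of $v$ and the bound $\mathbb{E}\big[\sup_{s\in[t,t+h]}|X^{t,\xi,\alpha}_s-\xi|^2\big]\le Ch$ --- are what legitimize the approximation and stability steps (replacing $\alpha$ on $[s,T]$, passing to conditional scenarios, $\varepsilon$-optimization). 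All remaining computations are routine and appear in full in \cite{Pham_Wei_Dynamic_Programming}, so we omit them.
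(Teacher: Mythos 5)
Your outline follows the same route the paper itself takes: the paper omits the proof and simply refers to \cite{Pham_Wei_Dynamic_Programming}, which is exactly the argument you reconstruct (flow property from strong uniqueness, conditioning on the common-noise $\sigma$-algebra together with law invariance, $\varepsilon$-optimal controls for the sub-optimality inequality, and measurable selection plus pasting for super-optimality). Your reduction to the two one-sided bounds $v\ge\sup_\alpha\sup_s(\cdot)$ and $v\le\sup_\alpha\inf_s(\cdot)$, combined with the trivial $\inf_s\le\sup_s$, is the standard and correct scheme.
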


  We recall the following version of It\^o's formula for functions in $C^{1,2}([0,T]\times\mathcal{P}_2(\mathbb{R}^d))$, as stated in \cite[Theorem 4.14]{carmona_probabilistic_2018_vol2}.
 \begin{theorem}
	\label{standard_ito}
		Let $(\widetilde{b}_t)_{t\geq 0}$, $(\widetilde{\sigma}_t)_{t\geq 0}$ and $(\widetilde{\sigma}_t^0)_{t\geq 0}$ be progressively measurable processes with respect to $\mathbb{F}$, with values in $\mathbb{R}^d$, $\mathbb{R}^{d\times d}$ and $\mathbb{R}^{d\times d}$ respectively, such that for any finite horizon $T>0$, 
  \begin{align*}
      \mathbb{E}\left[\int_0^T \big(|\widetilde{b}_t|^2 + |\widetilde{\sigma}_t|^4 + |\widetilde{\sigma}^0_t|^4\big)dt\right]<\infty.
  \end{align*}
  Consider the following $\mathbb{R}^d$-valued It\^o process:
		\begin{align*}
			X_t = \xi+ \int^t_0 \widetilde{b}_s ds + \int^t_0\widetilde{\sigma}_s dW_s+\int^t_0\widetilde{\sigma}_s^0dW_s^0,\quad \text{for }t\in[0,T],
		\end{align*}
        where $\xi\in L^2(\Omega,\mathcal{F},\mathbb{P};\mathbb{R}^d)$. It holds $\mathbb{P}^0$-a.s. that for $\varphi\in C^{1,2}([0,T]\times\mathcal{P}_2(\mathbb{R}^d))$,
        \begin{align*}
			&\h{-10pt}\varphi(t,\mathbb{P}_{X_t}^{W^0}) \\=\,& \varphi(0,\mathbb{P}_{X_0}^{W^0})  + \int_0^t \partial_t \varphi(s,\mathbb{P}_{X_s}^{W^0})ds + \int_0^t\mathbb{E}^1\big[\partial_\mu \varphi(s,\mathbb{P}_{X_s}^{W^0})(X_s)\cdot \widetilde{b}_s\big]ds\\
   &+\int_0^t \mathbb{E}^1\big[\widetilde{\sigma}_s^{0;\top} \partial_\mu \varphi(s,\mathbb{P}_{X_s}^{W^0})(X_s)\big]\cdot dW_s^0
			+\frac{1}{2}\int_0^t\mathbb{E}^1\Big\{\operatorname{tr}\pig[\nabla_x\partial_\mu \varphi(s,\mathbb{P}_{X_s}^{W^0})(X_s)\widetilde{\sigma}_s\widetilde{\sigma}_s^\top\pig]\Big\}ds\\
   &+\frac{1}{2}\int_0^t\mathbb{E}^1\Big\{\operatorname{tr}\pig[\nabla_x\partial_\mu \varphi(s,\mathbb{P}_{X_s}^{W^0})(X_s)\widetilde{\sigma}_s^0\widetilde{\sigma}_s^{0;\top}\pig]\Big\}ds+\frac{1}{2}\int_0^t \mathbb{E}^1\widecheck{\mathbb{E}}^1\Big\{\operatorname{tr}\Big[\partial_\mu^2 \varphi(s,\mathbb{P}_{X_s}^{W^0})(X_s,\widecheck{X}_s)\widetilde{\sigma}_s^0\pig(\widecheck{\widetilde{\sigma}_s^{0}}\pigr)^\top\Big]\Big\}ds,
		\end{align*}
  where $\widecheck{\mathbb{E}}^1$ is the expectation under $(\widecheck{\Omega}^1,\widecheck{\mathcal{F}}^1,\widecheck{\mathbb{P}}^1)$ which is a copy of $(\Omega^1,\mathcal{F}^1,\mathbb{P}^1)$; and $(\widecheck{X}_t)_{t\geq 0}$, $(\widecheck{\widetilde{b}}_t)_{t\geq 0}$, $(\widecheck{\widetilde{\sigma}}_t)_{t\geq 0}$, $(\widecheck{\widetilde{\sigma}_t^0})_{t\geq 0}$ are copies of $(X_t)_{t\geq 0}$, $(\widetilde{b}_t)_{t\geq 0}$, $(\widetilde{\sigma}_t)_{t\geq 0}$, $(\widetilde{\sigma}^0_t)_{t\geq 0}$ on $(\widecheck{\Omega}^1,\widecheck{\mathcal{F}}^1,\widecheck{\mathbb{P}}^1)$.
	\end{theorem}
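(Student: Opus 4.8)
The plan is to prove the identity --- the It\^o formula along a flow of conditional laws --- by approximating $\mathbb{P}^{W^0}_{X_t}$ with a particle system sharing the common noise $W^0$, applying the classical finite-dimensional It\^o formula to the empirical projection of $\varphi$, and passing to the limit. Fix $N\in\mathbb{N}$ and, after enlarging the space if necessary, take independent copies $(\xi^i,W^i,\widetilde{b}^i,\widetilde{\sigma}^i,\widetilde{\sigma}^{0,i})_{1\le i\le N}$ of $(\xi,W,\widetilde{b},\widetilde{\sigma},\widetilde{\sigma}^0)$ on the idiosyncratic space, all built on the \emph{same} $W^0$; set $X^i_t:=\xi^i+\int_0^t\widetilde{b}^i_s\,ds+\int_0^t\widetilde{\sigma}^i_s\,dW^i_s+\int_0^t\widetilde{\sigma}^{0,i}_s\,dW^0_s$ and $\mu^N_t:=\frac1N\sum_{i=1}^N\delta_{X^i_t}$. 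Conditionally on $\mathcal{F}^{W^0}$ the $X^i$ are i.i.d., so a conditional law of large numbers gives $\mathcal{W}_2(\mu^N_t,\mathbb{P}^{W^0}_{X_t})\to0$ as $N\to\infty$, and similarly for the two-fold empirical measures and for the empirical measures of the tuples $(X^i_s,\widetilde{b}^i_s,\widetilde{\sigma}^i_s,\widetilde{\sigma}^{0,i}_s)$; Proposition \ref{prop. property of X} together with the assumed $L^2$/$L^4$ integrability of the coefficients supplies the uniform integrability that upgrades this to convergence of the relevant moments, after integration in $s$ and $\omega$.

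Next one checks that the empirical projection $u_N(t,x^1,\dots,x^N):=\varphi\pig(t,\frac1N\sum_i\delta_{x^i}\pigr)$ belongs to $C^{1,2}([0,T]\times(\mathbb{R}^d)^N)$, with $\nabla_{x^i}u_N=\frac1N\partial_\mu\varphi(t,\mu^N)(x^i)$, $\nabla^2_{x^ix^i}u_N=\frac1N\nabla_x\partial_\mu\varphi(t,\mu^N)(x^i)+\frac1{N^2}\partial^2_\mu\varphi(t,\mu^N)(x^i,x^i)$, and $\nabla^2_{x^ix^j}u_N=\frac1{N^2}\partial^2_\mu\varphi(t,\mu^N)(x^i,x^j)$ for $i\ne j$; this is a chain-rule computation resting directly on Definitions \ref{def. 1st L d}--\ref{def. 2nd L d} and the continuity and growth bounds of Definition \ref{def. of C1,2}. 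Applying the classical finite-dimensional It\^o formula to $u_N(t,X^1_t,\dots,X^N_t)$, and using $\langle W^i,W^j\rangle\equiv0$ for $i\ne j$ and $\langle W^i,W^0\rangle\equiv0$ so that $d\langle X^i,X^i\rangle_t=(\widetilde{\sigma}^i_t(\widetilde{\sigma}^i_t)^\top+\widetilde{\sigma}^{0,i}_t(\widetilde{\sigma}^{0,i}_t)^\top)\,dt$ while $d\langle X^i,X^j\rangle_t=\widetilde{\sigma}^{0,i}_t(\widetilde{\sigma}^{0,j}_t)^\top\,dt$ for $i\ne j$, one obtains the particle analogue of the claimed identity: a $\partial_t\varphi$ term, a drift $\frac1N\sum_i\partial_\mu\varphi(t,\mu^N_t)(X^i_t)\cdot\widetilde{b}^i_t$, diagonal second-order terms weighted by $\frac1N$ (carrying $\widetilde{\sigma}^i(\widetilde{\sigma}^i)^\top+\widetilde{\sigma}^{0,i}(\widetilde{\sigma}^{0,i})^\top$), a cross term $\frac1{N^2}\sum_{i\ne j}\operatorname{tr}[\partial^2_\mu\varphi(t,\mu^N_t)(X^i_t,X^j_t)\widetilde{\sigma}^{0,i}_t(\widetilde{\sigma}^{0,j}_t)^\top]$, an $O(1/N)$ remainder coming from the $\frac1{N^2}$-weighted diagonal, the common-noise martingale $\frac1N\sum_i(\widetilde{\sigma}^{0,i}_t)^\top\partial_\mu\varphi(t,\mu^N_t)(X^i_t)\cdot dW^0_t$, and an idiosyncratic martingale whose quadratic variation is of order $1/N$.

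Finally one lets $N\to\infty$. The time-integral terms converge by the conditional law of large numbers and the continuity of $\partial_t\varphi,\partial_\mu\varphi,\nabla_x\partial_\mu\varphi,\partial^2_\mu\varphi$ along $\mathcal{W}_2$, using the uniform integrability from the first step; the $\frac1{N^2}$-weighted diagonal remainder and the idiosyncratic martingale both vanish. For the common-noise stochastic integral, the integrand $\frac1N\sum_i(\widetilde{\sigma}^{0,i}_s)^\top\partial_\mu\varphi(s,\mu^N_s)(X^i_s)$ converges in $L^2(ds\otimes d\mathbb{P})$ to $\mathbb{E}^1[(\widetilde{\sigma}^0_s)^\top\partial_\mu\varphi(s,\mathbb{P}^{W^0}_{X_s})(X_s)]$, so one may pass to the limit by It\^o's isometry, obtaining exactly the stated formula, with all non-$W^0$ terms under $\mathbb{E}^1$ and the double integral under $\mathbb{E}^1\widecheck{\mathbb{E}}^1$ --- which is precisely why no idiosyncratic martingale survives in the limit. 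A subsequence-plus-continuity argument then upgrades this to the identity holding $\mathbb{P}^0$-a.s.\ simultaneously for all $t\in[0,T]$. The main obstacle is making rigorous the two technical cores of this scheme: the $C^{1,2}$-regularity and exact derivative structure of the empirical projection $u_N$ (a delicate chain rule for $L$-derivatives along empirical measures), and the passage to the limit inside the $dW^0$-integral, which requires the conditional propagation-of-chaos estimates to be quantitative enough --- or coupled with enough uniform integrability --- to yield $L^2$ convergence of the integrand rather than mere convergence in probability. Since this is a standard result, we simply invoke \cite[Theorem 4.14]{carmona_probabilistic_2018_vol2}.
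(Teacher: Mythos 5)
Your proposal ends, as the paper itself does, by simply invoking \cite[Theorem 4.14]{carmona_probabilistic_2018_vol2}; the paper offers no proof beyond that citation, so in substance you have matched it exactly. The particle-system sketch preceding the citation — empirical projection $u_N$, the chain-rule identities for $\nabla_{x^i}u_N$ and $\nabla^2_{x^ix^j}u_N$, the classical It\^o formula with the $\langle W^i,W^j\rangle=0$ and $\langle W^i,W^0\rangle=0$ cross-variation structure, the $O(1/N)$ diagonal remainder and vanishing idiosyncratic martingale, and the $L^2$-passage in the $dW^0$-integral — is a faithful and accurate outline of how the cited result is actually established, so the extra detail is correct and harmless.
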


\section{Smooth Finite-dimensional Approximations of Value Function}
\label{approximation}
In this section, we construct $C^{1,2}([0,T]\times \mathcal{P}_2(\mathbb{R}^d))$ approximations of the value function $v$. As similar results have been proved in \cite{cosso_master_2022,cheung2023viscosity} and the notations are quite heavy, we only provide main ideas of the construction in this section and postpone the details to Appendix. The main idea is to first add a small term in the volatility to ensure that the  HJB equation \eqref{HJB_intro} is non-degenerate in the second-order term.  Then, we mollify the coefficient functions and use the empirical measure to approximate the measure arguments. It ensures that the approximated coefficient functions are smooth and on a finite-dimensional domain. These procedures allow us to construct a regular enough test function from the approximations of value function $v$.
\subsection{Approximation by Non-degenerate Problem}
	Fix a complete probability space $(\check{\Omega},\check{\mathcal{F}},\check{\mathbb{P}})$, also of the form $(\check{\Omega}^0\times\check{\Omega}^1,\check{\mathcal{F}}^0\otimes \check{\mathcal{F}}^1,\check{\mathbb{P}}^0\otimes \check{\mathbb{P}}^1)$. The space $(\check{\Omega}^0,\check{\mathcal{F}}^0,\check{\mathbb{P}}^0)$ supports a $d$-dimensional Brownian motion $\check{W}^0$. For $(\check{\Omega}^1,\check{\mathcal{F}}^1,\check{\mathbb{P}}^1)$, it is of the form $(\check{\tilde{\Omega}}^1\times \check{\hat{\Omega}}^1,\check{\mathcal{G}}\otimes\check{\hat{\mathcal{F}}}^1,\check{\tilde{\mathbb{P}}}^1\otimes\check{\hat{\mathbb{P}}}^1)$. On $(\check{\hat{\Omega}}^1,\check{\hat{\mathcal{F}}}^1,\check{\hat{\mathbb{P}}}^1)$, there lives $d$-dimensional Brownian motions $\check{W}$ and $\check{B}$. The space $(\check{\tilde{\Omega}}^1,\check{\mathcal{G}},\check{\tilde{\mathbb{P}}}^1)$ is where the initial random variables live. We assume that $(\check{\tilde{\Omega}}^1,\check{\mathcal{G}},\check{\tilde{\mathbb{P}}}^1)$ is rich enough to support all probability laws on $\mathbb{R}^d$, i.e., for any probability law $\mu$ on $\mathbb{R}^d$, there exists $X:\check{\tilde{\Omega}}^1\to\mathbb{R}^d$ such that $\mathcal{L}(X) = \mu$. The expectation $\mathbb{E}$ in this subsection is taken with respect to $\check{\mathbb{P}}=\check{\mathbb{P}}^0\otimes \check{\mathbb{P}}^1$.
 
	Set $\check{\mathbb{F}} = (\check{\mathcal{F}}_s)_{s\geq 0}:=\left(\sigma(\check{W}^0_{r})_{0\leq r\leq s}\vee\sigma(\check{W}_{r})_{0\leq r\leq s}\vee\sigma(\check{B}_{r})_{0\leq r\leq s}\vee\check{\mathcal{G}}\right)_{s\geq 0}$, $\check{\mathbb{F}}^t = (\mathcal{F}_s^t)_{s\geq 0}:=\big(\sigma(\check{W}_r^0-\check{W}_t^0)_{0\leq r\leq s}\vee\sigma(\check{W}_{r\vee t}-\check{W}_t)_{0\leq r\leq s}\vee\sigma(\check{B}_{r\vee t}-\check{B}_t)_{0\leq r\leq s}\vee\check{\mathcal{G}}\big)_{s\geq 0}$. Without loss of generality, we assume that they are $\check{\mathbb{P}}$-complete. Let $t>0$, denote by $\check{\mathcal{A}}$ (resp. $\check{\mathcal{A}}_t$) the set of $\check{\mathbb{F}}$-progressively measurable processes (resp. $\check{\mathbb{F}}^{t}$-progressively measurable processes) valued in $A$.

	Letting $\varepsilon>0$, $t\in [0,T)$, $\check{\xi}\in L^2(\check{\Omega}^1,\check{\mathcal{F}}^1,\check{\mathbb{P}}^1;\mathbb{R}^d)$ and $\check{\alpha}\in \check{\mathcal{A}}$, we consider the unique solution $\check{X}^{\varepsilon,t,\check{\xi},\check{\alpha}}=(\check{X}^{\varepsilon,t,\check{\xi},\check{\alpha}}_s)_{s\in [t,T]}$ of  the perturbed equation:
	\begin{align}
		X_s =\,& \check{\xi} + \int_{t}^s b(r,X_{r},\mathbb{P}_{X_{r}}^{\check{W}^0},\check{\alpha}_r)dr + \int_t^s \sigma(r,X_{r},\check{\alpha}_r)d\check{W}_r + \int_t^s \sigma^0(r,X_r)d\check{W}^0_r\nonumber\\
		&+\varepsilon(\check{B}_s - \check{B}_t).
		\label{eq. perturbed, mfc}
	\end{align}
	For any $t \in [0,T]$ and $\check{\xi}\in L^2(\check{\Omega}^1,\check{\mathcal{F}}^1,\check{\mathbb{P}}^1;\mathbb{R}^d)$, we  consider the value function:
	\begin{align}
		V_{\varepsilon}(t,\check{\xi})
		=\sup_{\check{\alpha}\in \check{\mathcal{A}}_t}J_{\varepsilon}(t,\check{\xi},\check{\alpha})
		:=\sup_{\check{\alpha}\in \check{\mathcal{A}}_t}
		\mathbb{E}\Bigg[\int_t^T f\Big(s,\check{X}^{\varepsilon,t,\check{\xi},\check{\alpha}}_s,
		\mathbb{P}^{\check{W}^0}_{\check{X}^{\varepsilon,t,\check{\xi},\check{\alpha}}_s},\check{\alpha}_s\Big)ds + g\Big(\check{X}^{\varepsilon,t,\check{\xi},\check{\alpha}}_T,
		\mathbb{P}^{\check{W}^0}_{\check{X}^{\varepsilon,t,\check{\xi},\check{\alpha}}_T}\Big)\Bigg].
		\label{1368}
	\end{align}
	By the law invariance property similar to \cite[Appendix B]{cosso2023optimal} and \cite[Proposition 3.3]{cheung2023viscosity}, we can define a function $v_{\varepsilon}(t,\mu):[0,T]\times\mathcal{P}_2(\mathbb{R}^d) \to \mathbb{R}$ such that
	\begin{align}
		v_{\varepsilon}(t,\mu) := V_{\varepsilon}(t,\check{\xi}),
		\label{def. v_e=V_e mfc}
	\end{align}
	for any $\check{\xi}\in L^2(\check{\Omega}^1,\check{\mathcal{F}}^1,\check{\mathbb{P}}^1;\mathbb{R}^d)$ such that $\mathcal{L}(\check{\xi})=\mu$.
	\begin{lemma}
	Suppose that Assumption \ref{assume:A} holds. There exists a constant $C_5=C_5(d,K,T)>0$ such that for any $\varepsilon \geq 0$ and $(t,\mu) \in [0,T] \times \mathcal{P}_2(\mathbb{R}^d)$, it holds that $|v_{\varepsilon}(t,\mu)-v_0(t,\mu)| \leq C_5\varepsilon.$
		\label{lem. |v_e-v_0|<C_5 e}
	\end{lemma}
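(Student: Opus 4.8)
The plan is to compare the two control problems by a coupling argument, propagating the $\varepsilon$-perturbation through the state dynamics and then into the cost functional. Fix $\varepsilon \geq 0$, $(t,\mu) \in [0,T]\times \mathcal{P}_2(\mathbb{R}^d)$ and a common initial random variable $\check{\xi}$ with $\mathcal{L}(\check{\xi})=\mu$. Since $v_\varepsilon$ and $v_0$ are both defined as suprema over the \emph{same} control set $\check{\mathcal{A}}_t$ (for $v_0$ one simply does not use the extra Brownian motion $\check{B}$, but it is harmless to keep the same filtration), it suffices to show that for every $\check{\alpha}\in\check{\mathcal{A}}_t$ one has $|J_\varepsilon(t,\check{\xi},\check{\alpha})-J_0(t,\check{\xi},\check{\alpha})|\leq C_5\varepsilon$, and then take the supremum; the elementary inequality $|\sup_\alpha J_\varepsilon - \sup_\alpha J_0|\leq \sup_\alpha|J_\varepsilon-J_0|$ closes the argument. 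So the whole problem reduces to a stability estimate for the SDE with respect to the additive noise term $\varepsilon(\check{B}_s-\check{B}_t)$, followed by the Lipschitz bounds on $f$ and $g$.

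First I would estimate $\mathbb{E}\big[\sup_{s\in[t,T]}|\check{X}^{\varepsilon,t,\check{\xi},\check{\alpha}}_s - \check{X}^{0,t,\check{\xi},\check{\alpha}}_s|^2\big]$. Writing $\Delta_s$ for the difference of the two state processes, subtracting the two copies of \eqref{eq. perturbed, mfc} (with the same $\check{\xi}$, $\check{\alpha}$, $\check{W}$, $\check{W}^0$), and using the $\mathcal{W}_1$-Lipschitz (in particular $\mathcal{W}_2$-Lipschitz, hence $L^2$-Lipschitz) continuity of $b$ in all of its $(x,\mu)$ arguments together with the Lipschitz continuity of $\sigma,\sigma^0$ in $x$ from Assumption~\ref{assume:A}(1), one gets
\begin{align*}
\mathbb{E}\Big[\sup_{r\in[t,s]}|\Delta_r|^2\Big] \leq C\int_t^s \mathbb{E}\Big[\sup_{u\in[t,r]}|\Delta_u|^2\Big]\,dr + C\,\varepsilon^2\,\mathbb{E}\Big[\sup_{r\in[t,T]}|\check{B}_r-\check{B}_t|^2\Big],
\end{align*}
where the Doob/Burkholder--Davis--Gundy inequality handles the stochastic integrals and the term $\mathbb{E}\big[\sup_{u\in[t,r]}|\mathbb{P}^{\check W^0}_{\check X^\varepsilon_u}\text{ vs }\mathbb{P}^{\check W^0}_{\check X^0_u}|^2\big]$ in $\mathcal{W}_2^2$ is bounded by $\mathbb{E}[\sup_{u\in[t,r]}|\Delta_u|^2]$ via the synchronous coupling. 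Since $\mathbb{E}[\sup_{r\in[t,T]}|\check{B}_r-\check{B}_t|^2]\leq C(T)$, Grönwall's inequality yields $\mathbb{E}\big[\sup_{s\in[t,T]}|\Delta_s|^2\big]\leq C\varepsilon^2$ with $C=C(d,K,T)$.

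Next I would plug this into the cost functionals. Using Assumption~\ref{assume:A}(1), $f$ and $g$ are Lipschitz in $(x,\mu)$ under $|\cdot|+\mathcal{W}_1$, so
\begin{align*}
|J_\varepsilon(t,\check{\xi},\check{\alpha})-J_0(t,\check{\xi},\check{\alpha})| &\leq \mathbb{E}\Big[\int_t^T K\big(|\Delta_s| + \mathcal{W}_1(\mathbb{P}^{\check W^0}_{\check X^\varepsilon_s},\mathbb{P}^{\check W^0}_{\check X^0_s})\big)ds + K\big(|\Delta_T| + \mathcal{W}_1(\mathbb{P}^{\check W^0}_{\check X^\varepsilon_T},\mathbb{P}^{\check W^0}_{\check X^0_T})\big)\Big]\\
&\leq C\,\mathbb{E}\Big[\sup_{s\in[t,T]}|\Delta_s|\Big] \leq C\Big(\mathbb{E}\Big[\sup_{s\in[t,T]}|\Delta_s|^2\Big]\Big)^{1/2} \leq C_5\,\varepsilon,
\end{align*}
where $\mathcal{W}_1(\mathbb{P}^{\check W^0}_{\check X^\varepsilon_s},\mathbb{P}^{\check W^0}_{\check X^0_s})\leq \mathbb{E}^1[|\Delta_s|\,|\,\check W^0]$ pointwise in $\check W^0$ by the synchronous coupling, Jensen, and Cauchy--Schwarz. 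Taking $\sup_{\check{\alpha}\in\check{\mathcal{A}}_t}$ on both sides and recalling \eqref{def. v_e=V_e mfc} gives $|v_\varepsilon(t,\mu)-v_0(t,\mu)|\leq C_5\varepsilon$, as claimed.

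The only delicate point — and the step I would flag as the main obstacle — is the treatment of the conditional-law term $\mathcal{W}_2(\mathbb{P}^{\check W^0}_{\check X^\varepsilon_r},\mathbb{P}^{\check W^0}_{\check X^0_r})$ inside the Grönwall loop: one must be careful that the coupling used is $\check W^0$-measurable so that $(\check X^\varepsilon_r,\check X^0_r)$ is, conditionally on $\check W^0$, a valid coupling of the two conditional laws, giving $\mathcal{W}_2^2(\mathbb{P}^{\check W^0}_{\check X^\varepsilon_r},\mathbb{P}^{\check W^0}_{\check X^0_r})\leq \mathbb{E}^1[|\Delta_r|^2\mid \check W^0]$ and hence, after taking $\mathbb{E}^0$, the bound $\mathbb{E}[\sup_{u\leq r}|\Delta_u|^2]$. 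This is exactly the kind of estimate already implicit in Proposition~\ref{prop. property of X}, so I would either invoke it directly or reproduce the one-line computation; everything else is routine BDG/Grönwall bookkeeping.
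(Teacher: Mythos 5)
Your proposal is correct and follows essentially the same route as the paper's (very terse) proof: synchronously couple the two state processes, use a Grönwall estimate to get $\mathbb{E}\big[\sup_{s}|\check X^\varepsilon_s-\check X^0_s|^2\big]\leq C\varepsilon^2$, then invoke Lipschitz continuity of $f,g$ and the elementary $|\sup_\alpha J_\varepsilon-\sup_\alpha J_0|\le\sup_\alpha|J_\varepsilon-J_0|$ bound. Your write-up is simply more explicit, in particular in justifying that the synchronous coupling yields a $\check W^0$-measurable coupling of the conditional laws so that the $\mathcal{W}_1$-terms in $b,f,g$ are controlled by $\mathbb{E}^1[|\Delta_s|\mid\check W^0]$ — a step the paper leaves implicit.
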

 \begin{proof}
It is standard to see that $\mathbb{E}\pig[|\check{X}^{\varepsilon,t,\check{\xi},\check{\alpha}}_s
-\check{X}^{0,t,\check{\xi},\check{\alpha}}_s|^2\pig]\leq C \varepsilon^2$ for any $s\in [t,T]$ by using Assumption \ref{assume:A} and equation \eqref{eq. perturbed, mfc}. The result follows by 
\begin{align*}
\left| v_{\varepsilon}(t, \mu) - v_0(t, \mu) \right|^2 
\leq 2K^2 \mathbb{E} \left[ T\int_{t}^{T} \left| \check{X}^{\varepsilon,t,\check{\xi},\check{\alpha}}_s - \check{X}^{0,t,\check{\xi},\check{\alpha}}_s \right|^2 ds + \left| \check{X}^{\varepsilon,t,\check{\xi},\check{\alpha}}_T - \check{X}^{0,t,\check{\xi},\check{\alpha}}_T \right|^2 \right]\leq C_5^2 \varepsilon^2.
\end{align*}
 \end{proof}

\vspace{4pt}
\subsection{Smooth Finite-dimensional Approximation of Coefficient Functions}
\label{Phi_defined}

In this section, we illustrate the finite-dimensional approximation. Consider a complete probability space $(\overline{\Omega},\overline{\mathcal{F}},\overline{\mathbb{P}})$, which is also of the form $(\overline{\Omega}^0\times\overline{\Omega}^1,\overline{\mathcal{F}}^0\otimes \overline{\mathcal{F}}^1,\overline{\mathbb{P}}^0\otimes \overline{\mathbb{P}}^1)$. The space $(\overline{\Omega}^0,\overline{\mathcal{F}}^0,\overline{\mathbb{P}}^0)$ supports a $d$-dimensional Brownian motion $\overline{W}^0$. For $(\overline{\Omega}^1,\overline{\mathcal{F}}^1,\overline{\mathbb{P}}^1)$, it is of the form $(\overline{\tilde{\Omega}}^1\times \overline{\hat{\Omega}}^1,\overline{\mathcal{G}}\otimes\overline{\hat{\mathcal{F}}}^1,\overline{\tilde{\mathbb{P}}}^1\otimes\overline{\hat{\mathbb{P}}}^1)$. Let $n \in \mathbb{N}$. There lives $d$-dimensional Brownian motions $\overline{W}^1,\ldots,\overline{W}^n$ and $\overline{B}^1,\ldots,\overline{B}^n$ on $(\overline{\hat{\Omega}}^1,\overline{\hat{\mathcal{F}}}^1,\overline{\hat{\mathbb{P}}}^1)$. We require $\overline{W}^{1},\ldots,\overline{W}^n,\overline{B}^1,\ldots,\overline{B}^n$ to be mutually independent. The space $(\overline{\tilde{\Omega}}^1,\overline{\mathcal{G}},\overline{\tilde{\mathbb{P}}}^1)$ is where the initial random variables live. We assume that $(\overline{\tilde{\Omega}}^1,\overline{\mathcal{G}},\overline{\tilde{\mathbb{P}}}^1)$ is rich enough to support all probability laws on $\mathbb{R}^d$, i.e., for any probability law $\mu$ on $\mathbb{R}^d$, there exists $X:\overline{\tilde{\Omega}}^1\to\mathbb{R}^d$ such that $\mathcal{L}(X) = \mu$. 

	We define $\overline{\mathbb{F}} = (\overline{\mathcal{F}}_s)_{s\geq 0}:=\left(\sigma(\overline{W}^0_{r})_{0\leq r\leq s}\vee\sigma(\overline{W}_{r}^i)_{0\leq r\leq s,i=1,\ldots,n}\vee\sigma(\overline{B}_{r}^i)_{0\leq r\leq s,i=1,\ldots,n}\vee\overline{\mathcal{G}}\right)_{s\geq 0}$, $\overline{\mathbb{F}}^t = (\mathcal{F}_s^t)_{s\geq 0}:=\left(\sigma(\overline{W}_r^0-\overline{W}^0_t)_{0\leq r\leq s}\vee\sigma(\overline{W}_{r\vee t}^i-\overline{W}^i_t)_{0\leq r\leq s,i=1,\ldots,n}\vee\sigma(\overline{B}^i_{r\vee t}-\overline{B}^i_t)_{0\leq r\leq s,i=1,\ldots,n}\vee\overline{\mathcal{G}}\right)_{s\geq 0}$. Without loss of generality, we assume that they are $\overline{\mathbb{P}}$-complete. Let $t>0$ and denote by $\overline{\mathcal{A}}^n$ (resp. $\overline{\mathcal{A}}_t^n$) the set of $\overline{\mathbb{F}}$-progressively measurable processes (resp. $\overline{\mathbb{F}}^{t}$-progressively measurable processes) $\overline{\alpha} = (\overline{\alpha}^1,\ldots,\overline{\alpha}^n)$ valued in $A^n$.
    
 Now we introduce the smooth approximations of the coefficients. Let $m \in \mathbb{N}$, $\phi: \mathbb{R} \to \mathbb{R}^+$ and $\Phi: \mathbb{R}^d \to \mathbb{R}^+$ be two compactly supported smooth functions satisfying $\int_{\mathbb{R}}\phi(s)ds=1$, $\int_{\mathbb{R}^{d}}\Phi(y)dy=1$ and $\int_{\mathbb{R}^{d}}|y|^\rho\Phi(y)dy\leq C_{\Phi,\rho}$ for some constant $C_{\Phi,\rho} > 0$, where $\rho$ is the parameter given in Assumption \ref{assume:A}. For $n,m \in \mathbb{N}$ and  $i=1,\ldots,n$, we define $b^i_{n,m}:[0,T]\times\mathbb{R}^{dn}\times A \to \mathbb{R}^d$, 
	$f^i_{n,m}:[0,T]\times\mathbb{R}^{dn}\times A \to \mathbb{R}^d$, 
	$g^i_{n,m}:\mathbb{R}^{dn}\to \mathbb{R}^d$ by the smooth approximations of $b$, $f$ and $g$ respectively such that
		\begin{align}
		b^i_{n,m}(t,\overline{x},a)&:=m^{dn+1}\int_{\mathbb{R}^{dn+1}}
		b\left( (t-s)^+\wedge T,x^i-y^i,\dfrac{1}{n}\sum^n_{j=1}\delta_{x^j-y^j},a\right)\phi(ms)\prod^n_{k=1}\Phi(my^k)dy^kds;\label{def. approx of b}\\
		f^i_{n,m}(t,\overline{x},a)&:=m^{dn+1}\int_{\mathbb{R}^{dn+1}}
		f\left( (t-s)^+\wedge T,x^i-y^i,\dfrac{1}{n}\sum^n_{j=1}\delta_{x^j-y^j},a\right)\phi(ms)\prod^n_{k=1}\Phi(my^k)dy^kds;\label{def. approx of f}\\
		g^i_{n,m}(\overline{x})&:=m^{dn}\int_{\mathbb{R}^{dn}}
		g\left( x^i-y^i,\dfrac{1}{n}\sum^n_{j=1}\delta_{x^j-y^j}\right)\prod^n_{k=1}\Phi(my^k)dy^k.
  \label{def. approx of g}
	\end{align}
First, we establish some basic properties of these approximation functions. 
\begin{lemma}
		Suppose that Assumption \ref{assume:A} holds, and define $
			\widehat{\mu}^{n,\overline{x}}:=\dfrac{1}{n}\displaystyle\sum^n_{j=1} \delta_{x^j}$. For any $n,m \in \mathbb{N}$, $i=1,2,\ldots,n$ and $(t,\overline{x},\overline{z},a) \in [0,T] \times \mathbb{R}^{dn}\times \mathbb{R}^{dn}\times A$, we have the following:
		\begin{enumerate}[(1).]
			\item 
				$|f^i_{n,m}(t,\overline{x},a)|
				\vee|g^i_{n,m}(\overline{x})|
				\leq K$ \,and \,$
    |b^i_{n,m}(t,\overline{x},a)|\leq K\pig(1+C_{\Phi,\rho}m^{-\rho}+|x^i|^\rho\pig);$
			\item 
			$|b^i_{n,m}(t,\overline{x},a)
					-b(t,x^i,\widehat{\mu}^{n,\overline{x}},a)|\vee
					|f^i_{n,m}(t,\overline{x},a)
					-f(t,x^i,\widehat{\mu}^{n,\overline{x}},a)|$\\
					$\leq\, Km\displaystyle\int_{\mathbb{R}}\left|t-\pig[T\wedge(t-s)^+\pig]\right|^\beta \phi(ms)ds
					+Km^{dn}\int_{\mathbb{R}^{dn}}
					\left(|y^i|+\dfrac{1}{n}\sum^n_{j=1}|y^j|\right) \prod^n_{k=1}\Phi(my^k)dy^k$;\\ 
				$|g^i_{n,m}(\overline{x})
					-g(x^i,\widehat{\mu}^{n,\overline{x}})|
					\leq Km^{dn}\displaystyle\int_{\mathbb{R}^{dn}}
					\left(|y^i|+\dfrac{1}{n}\displaystyle\sum^n_{j=1}|y^j|\right) \prod^n_{k=1}\Phi(my^k)dy^k$;
			\item $|b^i_{n,m}(t,\overline{x},a)
				-b^i_{n,m}(t,\overline{z},a)|
				\vee
				|f^i_{n,m}(t,\overline{x},a)
				-f^i_{n,m}(t,\overline{z},a)|
				\vee
				|g^i_{n,m}(\overline{x})
				-g^i_{n,m}(\overline{z})|$\\
				$\leq K\left[|x^i-z^i|+\dfrac{1}{n}\displaystyle\sum^n_{j=1}\left|x^j
				-z^j\right|\right]$;
   	\item $\displaystyle\lim_{m\to\infty} b^i_{n,m}(t,\overline{x},a)
				=b(t,x^i,\widehat{\mu}^{n,\overline{x}},a),\h{10pt}
				\lim_{m\to\infty} f^i_{n,m}(t,\overline{x},a)
				=f(t,x^i,\widehat{\mu}^{n,\overline{x}},a)$ and \\
				$\displaystyle\lim_{m\to\infty} g^i_{n,m}(\overline{x})
			=g(x^i,\widehat{\mu}^{n,\overline{x}})$. These limits hold uniformly in $(t, \overline{x}, a) \in [0, T] \times \mathbb{R}^{dn} \times A$.
		\end{enumerate}
		\label{lem estimate of b^i_n,m...}
	\end{lemma}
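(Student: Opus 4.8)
The proof of Lemma \ref{lem estimate of b^i_n,m...} is a direct computation unwinding the mollification formulas \eqref{def. approx of b}--\eqref{def. approx of g}, using only the bounds and Lipschitz estimates from Assumption \ref{assume:A}. I will carry out the five parts in order, since later parts reuse estimates from earlier ones.

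\textbf{Part (1): growth bounds.} For $f^i_{n,m}$ and $g^i_{n,m}$, I would simply move the absolute value inside the integrals and apply Assumption \ref{assume:A}(3), $|f|+|g| \leq K$; since $\phi$ and $\prod_k \Phi(m\,\cdot)$ integrate to $1$ (after the change of variables $y^k \mapsto m y^k$, $s \mapsto ms$ that absorbs the $m^{dn+1}$ prefactor), the bound $K$ follows. For $b^i_{n,m}$, I apply Assumption \ref{assume:A}(2), $|b(t,x,\mu,a)| \leq K(1+|x|^\rho)$, with $x$ replaced by $x^i - y^i$; using $|x^i-y^i|^\rho \leq |x^i|^\rho + |y^i|^\rho$ (valid since $\rho \in [0,1)$, so $t \mapsto t^\rho$ is subadditive) and the normalization $\int |y|^\rho \Phi(y)\,dy \leq C_{\Phi,\rho}$ together with the change of variables producing the $m^{-\rho}$ factor, I obtain $K(1 + C_{\Phi,\rho}m^{-\rho} + |x^i|^\rho)$.

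\textbf{Parts (2)--(3): mollification error and Lipschitz continuity.} For (2), I write $b^i_{n,m}(t,\overline{x},a) - b(t,x^i,\widehat{\mu}^{n,\overline{x}},a)$ as a single integral against the probability density $\phi(ms)\prod_k \Phi(my^k)$ (times the prefactor) of the difference $b((t-s)^+\wedge T, x^i - y^i, \tfrac1n\sum_j \delta_{x^j-y^j}, a) - b(t,x^i,\widehat{\mu}^{n,\overline{x}},a)$, then apply the joint Lipschitz bound in Assumption \ref{assume:A}(1). The time increment contributes $K|t - (T\wedge(t-s)^+)|^\beta$; the spatial increment contributes $K|y^i|$; and the measure increment contributes $K\,\mathcal{W}_1(\tfrac1n\sum_j\delta_{x^j-y^j}, \tfrac1n\sum_j\delta_{x^j}) \leq \tfrac{K}{n}\sum_j |y^j|$ by using the coupling matching $x^j - y^j$ with $x^j$. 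Integrating term by term gives the stated bound (the time-integral keeping $\phi(ms)$ un-rescaled as written, the space-integrals keeping $m^{dn}\prod_k\Phi(my^k)$). The same argument works verbatim for $f$, and for $g$ there is no time variable so only the last two terms appear. For (3), I take the difference $b^i_{n,m}(t,\overline{x},a) - b^i_{n,m}(t,\overline{z},a)$ at a common $(t,a)$, combine into one integral, and apply Assumption \ref{assume:A}(1) to $b((t-s)^+\wedge T, x^i-y^i, \tfrac1n\sum_j\delta_{x^j-y^j},a) - b((t-s)^+\wedge T, z^i-y^i, \tfrac1n\sum_j\delta_{z^j-y^j},a)$: the $y$'s cancel, leaving $K[|x^i-z^i| + \tfrac1n\sum_j|x^j-z^j|]$, and the densities integrate to $1$; same for $f$, $g$.

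\textbf{Part (4): uniform convergence as $m\to\infty$.} This follows from the bound in (2). The space-integral term is $K m^{dn}\int_{\mathbb{R}^{dn}}(|y^i| + \tfrac1n\sum_j|y^j|)\prod_k\Phi(my^k)\,dy^k$; after the change of variables $y^k = w^k/m$ this equals $\tfrac{K}{m}\int_{\mathbb{R}^{dn}}(|w^i| + \tfrac1n\sum_j|w^j|)\prod_k\Phi(w^k)\,dw^k$, which is $O(1/m)$ uniformly in $(t,\overline{x},a)$ since $\Phi$ has compact support (so all first moments are finite). The time-integral term $Km\int_{\mathbb{R}}|t - (T\wedge(t-s)^+)|^\beta\phi(ms)\,ds = K\int_{\mathbb{R}}|t - (T\wedge(t - u/m)^+)|^\beta\phi(u)\,du$ after $u = ms$; since $\phi$ is compactly supported, $|u| \leq R$ on its support, so $|t - (T\wedge(t-u/m)^+)| \leq |u|/m \leq R/m$ uniformly in $t \in [0,T]$, hence this term is $O(m^{-\beta})$ uniformly. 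Both bounds are independent of $\overline{x}$ and $a$, giving the claimed uniform convergence.

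\textbf{Main obstacle.} There is no deep obstacle; the only point requiring care is the bookkeeping of the change of variables absorbing the $m^{dn+1}$ (resp. $m^{dn}$) prefactor into the densities $\phi(ms)$, $\Phi(my^k)$, and being consistent about whether a given term is presented in rescaled or un-rescaled form — the statement of (2) deliberately keeps the un-rescaled densities $\phi(ms)$ and $m^{dn}\prod_k\Phi(my^k)$, and one must then do the rescaling only in (4) to extract the $m^{-\beta}$ and $m^{-1}$ decay. Also, in Part (1) and in the $\mathcal{W}_1$-estimate of Part (2), one uses the subadditivity $|a+b|^\rho \le |a|^\rho+|b|^\rho$ and the specific product coupling of empirical measures, but these are elementary.
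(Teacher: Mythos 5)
Your proof is correct and follows essentially the same route as the paper's own argument: move the differences inside the mollification integrals, invoke Assumption \ref{assume:A}, bound the $\mathcal{W}_1$-distance between the shifted empirical measures by $\tfrac1n\sum_j|y^j|$ via the product coupling, and extract the $m^{-\beta}$ and $m^{-\rho}$ (or $m^{-1}$) decay by rescaling $s\mapsto ms$, $y^k\mapsto my^k$. The only difference is that you spell out more explicitly why (4) follows from (2) (using the compact supports of $\phi,\Phi$ to get $O(m^{-\beta})+O(m^{-1})$ uniformly), which the paper leaves as ``follows immediately.''
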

    For the proof of Lemma \ref{lem estimate of b^i_n,m...}, please refer to Appendix \ref{Appendix_proof_Section_3}. Let $\overline{X}^{m,\varepsilon,t,\overline{x},\overline{\alpha}}_s
	=(\overline{X}^{1,m,\varepsilon,t,\overline{x},\overline{\alpha}}_s,\ldots,\overline{X}^{n,m,\varepsilon,t,\overline{x},\overline{\alpha}}_s)$ be the solution of 
	\begin{align}
		X_s^i =\,& x^i + \int_{t}^s b_{n,m}^i(r,X_{r},\overline{\alpha}_r^i)dr + \int_t^s \sigma(r,X_{r}^i,\overline{\alpha}_r^i)d\overline{W}_r^i + \int_t^s \sigma^0(r,X_{r}^i)d\overline{W}^0_r
  +\varepsilon(\overline{B}^i_s - \overline{B}^i_t),
		\label{eq. state perturbed by e BM and smoothing}
	\end{align}
	where $X_s=(X_s^1,\ldots,X^n_s)$ with $\mathbb{R}^d$-valued processes $X_s^i$ for $i = 1,\dots,n$. We define 
	\begin{align}
		\overline{v}_{\varepsilon,n,m}(t,\overline{x})
		:=&\sup_{\overline{\alpha}\in \overline{\mathcal{A}}^n_t}J_{\varepsilon,n,m}^*(t,\overline{x},\overline{\alpha})\nonumber\\
		:=&\sup_{\overline{\alpha}\in \overline{\mathcal{A}}^n_t} \dfrac{1}{n}\sum^n_{i=1}\mathbb{E}\Bigg[\int_t^T f^i_{n,m}\left(s,\overline{X}^{1,m,\varepsilon,t,\overline{x},\overline{\alpha}}_s,\ldots,
		\overline{X}^{n,m,\varepsilon,t,\overline{x},\overline{\alpha}}_s,
		\overline{\alpha}^i_s\right)ds\nonumber \\
		&\h{100pt}+ g^i_{n,m}\left(\overline{X}^{1,m,\varepsilon,t,\overline{x},\overline{\alpha}}_T,\ldots,
		\overline{X}^{n,m,\varepsilon,t,\overline{x},\overline{\alpha}}_T\right)\Bigg],
		\label{def. tilde v_e,n,m}
	\end{align}
	for any $t \in [0,T]$ and $\overline{x} \in \mathbb{R}^{dn}$. This approximation of the value function is defined on a finite-dimensional domain. Owing to the non-degeneracy and smoothness of the coefficients, the approximation belongs to the class $C^{1,2}([0,T] \times \mathbb{R}^{dn})$ and possesses the following properties:
	\begin{lemma}
 \label{finite_derivative_estimate}
		Suppose that Assumptions \ref{assume:A}-\ref{assume:B} hold. There are positive constants $C_4=C_4(d,K,T)$ and $C_{n,m}$ such that the function $\overline{v}_{\varepsilon,n,m} : [0,T] \times \mathbb{R}^{dn} \longmapsto \mathbb{R}$ satisfies the following:
  \begin{enumerate}[(1).]
      \item For any $(t,\overline{x})$ in $[0,T]\times \mathbb{R}^{dn}$ and $i=1,2,\ldots,n$, we have
      \begin{align}
      \label{ineq.|D v_e,n,m|}
      |\nabla_{x^i}\overline{v}_{\varepsilon,n,m}(t,\overline{x})| \leq \dfrac{C_4}{n};
      \end{align}
      \item Denoting by $\p_{\overline{x}_i\overline{x}_j}^2 v(t,\overline{x}) \in  \mathbb{R}$ the second-order derivative with respect to $\overline{x}_i$ and $\overline{x}_j$, it holds that 
      \begin{align}
        -C_{n,m}\leq\p^2_{\overline{x}_i\overline{x}_j}\overline{v}_{\varepsilon,n,m}(t,\overline{x}) \leq \dfrac{C_{n,m}}{\varepsilon^2},
         \label{ineq.|D^2 v_e,n,m|}
      \end{align}
      for any $(t,\overline{x})$ in $[0,T]\times \mathbb{R}^{dn}$ and every $i,j=1,2,\ldots,dn$;
      \item The function $\overline{v}_{\varepsilon,n,m} \in C^{1,2}([0,T] \times \mathbb{R}^{dn})$ is the unique classical solution of the Bellman equation 
		\begin{equation}
			\left\{
			\begin{aligned}
				&\partial_t u(t,\overline{x})
				+\sup_{\overline{a} \in A^n}\Bigg\{\dfrac{1}{n}\sum^n_{i=1}f^i_{n,m}(t,\overline{x},a^i)
				+\dfrac{1}{2}\sum^n_{i=1}\textup{tr}\left[\Big((\sigma\sigma^\top)(t,x^i,a^i)+\sigma^0\sigma^{0;\top}(t,x^i)+\varepsilon^2I_d\Big)\nabla_{x^ix^i}^2
				u(t,\overline{x})\right]\\ 
				&\h{80pt}+\sum^n_{i=1}\left\langle b^i_{n,m}(t,\overline{x},a^i)
				,\nabla_{x^i}u(t,\overline{x})\right\rangle
				+\frac{1}{2}\sum_{i,j=1,i\neq j}^n\textup{tr}\Big[\sigma^0(t,x^i)\sigma^{0;\top}(t,x^j)\nabla_{x^ix^j}^2u(t,\overline{x})\Big]\Bigg\}\\
				&=0\h{5pt} \text{in $[0,T) \times\mathbb{R}^{dn}$};\\
				&u(T,\overline{x})
				=\dfrac{1}{n}\sum^n_{i=1}g^i_{n,m}(\overline{x})
				\h{5pt} \text{ in $\mathbb{R}^{dn}$},
			\end{aligned}
			\right.   
			\label{eq. bellman bar v_e,n,m}
		\end{equation}
		where $\overline{a}=(a^1,\ldots,a^n)$ with $a^i\in A $ for $i=1,2,\ldots,n$. 
  \end{enumerate}

		\label{lem. classical sol. of smooth approx.}
	\end{lemma}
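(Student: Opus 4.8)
The plan is to treat \eqref{def. tilde v_e,n,m} as the value function of a standard finite-dimensional stochastic control problem on $\mathbb{R}^{dn}$ with uniformly non-degenerate diffusion, and to invoke the classical theory of HJB equations. First I would observe that, by Lemma \ref{lem estimate of b^i_n,m...}, the drifts $b^i_{n,m}$ are Lipschitz in $\overline{x}$ uniformly in $(t,a)$, have at most $\rho$-growth (hence linear growth), and are continuous in $(t,\overline{x},a)$; similarly $f^i_{n,m}, g^i_{n,m}$ are bounded and Lipschitz; and by Assumptions \ref{assume:A}--\ref{assume:B} the maps $\sigma(\cdot,\cdot,a),\sigma^0(\cdot,\cdot)$ are bounded, Lipschitz in $x$, and $C^{1,2}$ in $(t,x)$. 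The diffusion matrix of the $\mathbb{R}^{dn}$-valued process $\overline{X}$ is a block matrix whose $(i,i)$-block is $\sigma\sigma^\top(t,x^i,a^i)+\sigma^0\sigma^{0;\top}(t,x^i)+\varepsilon^2 I_d$ and whose $(i,j)$-block for $i\neq j$ is $\sigma^0(t,x^i)\sigma^{0;\top}(t,x^j)$; this is exactly $\varepsilon^2 I_{dn}$ plus a positive semidefinite matrix (the Gram-type matrix coming from $\sigma,\sigma^0$), hence uniformly elliptic with ellipticity constant $\varepsilon^2$. Combined with the $C^{1,2}$-regularity and boundedness of the coefficients in Assumption \ref{assume:B}, standard parabolic theory (e.g. the results in \cite{yong1999stochastic} on HJB equations, or Krylov's regularity theory) gives that the dynamic programming equation \eqref{eq. bellman bar v_e,n,m} has a unique classical $C^{1,2}([0,T]\times\mathbb{R}^{dn})$ solution, and a verification argument identifies it with $\overline{v}_{\varepsilon,n,m}$; the constant $C_{n,m}$ bounding second derivatives (and $C_{n,m}/\varepsilon^2$ via Schauder-type interior estimates scaled by the ellipticity) comes from these a priori estimates and legitimately depends on $n,m$ through the Lipschitz/$C^{1,2}$ bounds of the mollified coefficients.

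Next I would prove part (1), the estimate $|\nabla_{x^i}\overline{v}_{\varepsilon,n,m}(t,\overline{x})|\leq C_4/n$. The key point is the $1/n$-scaling of the running and terminal costs and the symmetric structure of the dynamics. Fix $i$ and compare the system started from $\overline{x}$ with the one started from $\overline{x}'$ differing only in the $i$-th block. Using the same control $\overline{\alpha}$, by the Lipschitz estimates in Lemma \ref{lem estimate of b^i_n,m...}(3) and a Gr\"onwall argument on the $dn$-particle system (note that perturbing only particle $i$ feeds into the empirical-measure argument of every other particle's drift with weight $1/n$), one gets $\mathbb{E}\sup_{s}|\overline{X}^{j}_s-\overline{X}'^{j}_s|\leq C|x^i-x'^i|$ for each $j$, with $C=C(K,T)$ independent of $n$. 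Then, since $J^*_{\varepsilon,n,m}$ carries the prefactor $\tfrac1n\sum_{i=1}^n$ and each $f^i_{n,m},g^i_{n,m}$ is $K$-Lipschitz, the difference $|J^*(t,\overline{x},\overline{\alpha})-J^*(t,\overline{x}',\overline{\alpha})|\leq \tfrac{C}{n}|x^i-x'^i|$; taking suprema over $\overline{\alpha}$ and then $|x^i-x'^i|\to 0$ yields \eqref{ineq.|D v_e,n,m|}. (One must be slightly careful that the dependence of the $j$-th drift on the $i$-th coordinate is itself only $O(1/n)$-Lipschitz, which is exactly why no factor of $n$ is lost in the Gr\"onwall constant; I would make this explicit.)

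Finally, part (2): the lower bound $-C_{n,m}\leq \partial^2_{\overline{x}_i\overline{x}_j}\overline{v}_{\varepsilon,n,m}$ is a semiconcavity-type estimate that follows from the $C^{1,2}$-regularity of the coefficients and a standard argument (differentiating the flow twice, or using the representation of $\overline{v}_{\varepsilon,n,m}$ as a sup of smooth functionals with uniformly bounded second derivatives in $\overline{x}$, which is where Assumption \ref{assume:B} is essential); the upper bound $\partial^2_{\overline{x}_i\overline{x}_j}\overline{v}_{\varepsilon,n,m}\leq C_{n,m}/\varepsilon^2$ is the interior Schauder estimate for the non-degenerate parabolic equation \eqref{eq. bellman bar v_e,n,m}, where the $\varepsilon^{-2}$ is the usual loss from dividing by the ellipticity constant. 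I would assemble these by first establishing (3) via verification (so that $\overline{v}_{\varepsilon,n,m}$ is genuinely the classical solution one can apply Schauder to), then deriving (1) and the lower bound in (2) probabilistically, and the upper bound in (2) analytically. The main obstacle, and the step deserving the most care, is the uniform-in-$n$ control in part (1): one has to track precisely how the $1/n$ weight in the empirical measure interacts with the Gr\"onwall constant across the coupled $dn$-dimensional system so that the gradient bound is genuinely $O(1/n)$ and the constant $C_4$ depends only on $d,K,T$; by contrast, the second-order bounds in (2) are allowed to blow up in $n,m$ and $\varepsilon$, so they require only the qualitative classical theory rather than sharp tracking.
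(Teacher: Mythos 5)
The main gap is in your argument for part (1). You state the per-particle bound
$\mathbb{E}\sup_s|\overline{X}^j_s-\overline{X}'^j_s|\leq C|x^i-x'^i|$ for each $j$, with $C$ independent of $n$, and then deduce $|J^*(t,\overline{x},\overline{\alpha})-J^*(t,\overline{x}',\overline{\alpha})|\leq\tfrac{C}{n}|x^i-x'^i|$. This deduction does not follow. By Lemma~\ref{lem estimate of b^i_n,m...}(3), $|f^i_{n,m}(t,\overline{x},a)-f^i_{n,m}(t,\overline{z},a)|\leq K\bigl[|x^i-z^i|+\tfrac1n\sum_j|x^j-z^j|\bigr]$, so
$|J^*-J'^*|\lesssim \tfrac1n\sum_i\mathbb{E}\bigl[|\Delta X^i|+\tfrac1n\sum_j|\Delta X^j|\bigr]=\tfrac{2}{n}\sum_j\mathbb{E}|\Delta X^j|$ (up to integrating in $s$ and adding the terminal term). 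Your per-particle bound gives $\sum_j\mathbb{E}|\Delta X^j|\leq Cn|\Delta x^i|$, hence $|J^*-J'^*|\leq C'|\Delta x^i|$ --- the factor $1/n$ is lost. What is actually required is the \emph{summed} estimate $\sum_j\mathbb{E}|\Delta X^j_s|\leq C|\Delta x^i|$ with $C$ depending only on $d,K,T$; your parenthetical remark correctly identifies the $O(1/n)$ coupling as the mechanism, but the estimate as written does not encode it. Equivalently, you would need the \emph{refined} per-particle bounds $\mathbb{E}|\Delta X^j_s|\leq \tfrac{C}{n}|\Delta x^i|$ for $j\neq i$ and $\mathbb{E}|\Delta X^i_s|\leq C|\Delta x^i|$ --- neither of which you state.

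There is also a second, more technical, gap: a bare Gr\"onwall argument in $L^1$ for $\mathbb{E}|\Delta X^j_s|$ does not close because the martingale terms must be controlled. BDG gives $\mathbb{E}\sup_s\bigl|\int_t^s(\ldots)\,dW\bigr|\lesssim\mathbb{E}\bigl(\int_t^T|\ldots|^2\,dr\bigr)^{1/2}$, which mixes $L^1$ and $L^2$ moments and would re-introduce $n$-dependent factors when summed over $j$. The paper circumvents this by a Tanaka-type device: apply It\^o to $u_\vartheta(\Delta\overline{X}^i_s)$ with $u_\vartheta(y)=\sqrt{|y|^2+\vartheta^2}$, observe that the martingale integrand $\nabla u_\vartheta(\Delta\overline{X}^i)\cdot(\ldots)$ is bounded (so its expectation vanishes), bound the second-order terms by $K^2|\Delta\overline{X}^i|$, pass $\vartheta\to 0$, and only \emph{then} sum over $i$ and Gr\"onwall. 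Crucially, the $n$-independence of the Gr\"onwall constant rests on the \emph{column-sum} bound $\max_\ell\sum_{i=1}^n\sup_{\overline{x}}|\nabla_{x^\ell}b^i_{n,m}|\leq 2\sqrt{d}K$, i.e.\ one $O(1)$ own-block derivative plus $n-1$ off-diagonal derivatives each of size $O(1/n)$. This column-sum bound, rather than a per-particle bound, is the precise form in which ``the $1/n$ weight interacts with Gr\"onwall''; your sketch does not isolate it, and without it plus the Tanaka smoothing the argument as outlined would fail. On parts (2) and (3) your route is in the right spirit but glosses over the bootstrapping: a fully nonlinear equation with a $\sup_a$ is not covered by off-the-shelf classical existence theory, and the paper must first pass through viscosity $\to$ strong ($C^{(1+\alpha)/2,1+\alpha}_{\mathrm{loc}}$) $\to$ classical by freezing the (now H\"older-continuous) first-order term and invoking linear theory; ``a verification argument identifies it with $\overline{v}_{\varepsilon,n,m}$'' presupposes the classical solution one is trying to construct.
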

The proof of Lemma \ref{lem. classical sol. of smooth approx.} is delegated to Appendix \ref{Appendix_proof_Section_3}.

\subsection{Approximation of Value Function}

Recalling the definition of $J_{\varepsilon,n,m}^*$ in \eqref{def. tilde v_e,n,m}, we define
\begin{align*}
		\widetilde{v}_{\varepsilon,n,m}(t,\overline{\mu})
		:=&\sup_{\overline{\alpha}\in \overline{\mathcal{A}}^n_t}J_{\varepsilon,n,m}^*(t,\overline{\xi},\overline{\alpha})
\end{align*}
	for any $t \in [0,T]$ and $\overline{\mu} \in \mathcal{P}_2(\mathbb{R}^{dn})$ such that $\mathcal{L}(\overline{\xi})=\overline{\mu}$. It is clear that $\overline{v}_{\varepsilon,n,m}(t,\overline{x})=\widetilde{v}_{\varepsilon,n,m}(t,\delta_{x^1}\otimes\ldots\otimes\delta_{x^n})$. We define $v_{\varepsilon,n,m}(t,\mu):[0,T] \times \mathcal{P}_2(\mathbb{R}^d)\to\mathbb{R}$ by
\begin{align}
		v_{\varepsilon,n,m}(t,\mu) := \widetilde{v}_{\varepsilon,n,m}(t,\mu\otimes\ldots\otimes\mu).
		\label{def. v_e=V_e mfg}
\end{align}
\begin{theorem}
			Suppose that Assumptions \ref{assume:A}-\ref{assume:B} hold. For every $\varepsilon > 0$,
		$n,m \in \mathbb{N}$, the function $\overline{v}_{\varepsilon,n,m}$ defined in \eqref{def. tilde v_e,n,m} and the function $v_{\varepsilon,n,m}$ defined in \eqref{def. v_e=V_e mfg} satisfy the following:
		\begin{enumerate}[(1).]
			\item 
			For any $(t,\mu) \in [0,T] \times \mathcal{P}_2(\mathbb{R}^d)$, we have
			\begin{align}
				v_{\varepsilon,n,m}(t,\mu)
				=\int_{\mathbb{R}^{dn}}
				\overline{v}_{\varepsilon,n,m}
				(t,x^1,\ldots,x^n)
				\mu(dx^1)\otimes\ldots\otimes\mu(dx^n)\, ;
				\label{eq. v_e,n,m=int bar of v_e,n,m}
			\end{align}
			\item  $v_{\varepsilon,n,m}(t,\mu) \in C^{1,2}([0,T] \times \mathcal{P}_2(\mathbb{R}^d))$;
			\item There is a positive constant $\ell_2=\ell_2(K,T)$ such that for any $(t,\mu) \in [0,T] \times \mathcal{P}_2(\mathbb{R}^d)$, we have $|v_{\varepsilon,n,m}(t,\mu)|\leq \ell_2$;
			\item The function $v_{\varepsilon,n,m}(t,\mu)$ solves the following equation in the classical sense:
			\begin{equation*}
				\left\{
				\begin{aligned}
					&\partial_t u(t,\mu)\\
					&+\int_{\mathbb{R}^{dn}}
					\sup_{\overline{a} \in A^n}\Bigg\{\dfrac{1}{n}\sum^n_{i=1}f^i_{n,m}(t,\overline{x},a^i)
     +\sum^n_{i=1}\left\langle b^i_{n,m}(t,\overline{x},a^i),\nabla_{x^i}\overline{v}_{\varepsilon,n,m}(t,\overline{x})\right\rangle
					\\
					&\h{65pt}
     +\dfrac{1}{2}
			\sum^n_{i=1}\textup{tr}\left[\Big((\sigma\sigma^\top)(t,x^i,a^i)
					+(\sigma^0\sigma^{0;\top})(t,x^i)+\varepsilon^2I_d\Big)\nabla_{x^ix^i}^2
					\overline{v}_{\varepsilon,n,m}
	(t,\overline{x})\right]\\
 		&\h{65pt}+\frac{1}{2}\sum_{i,j=1,i\neq j}^n\textup{tr}\Big[\sigma^0(t,x^i)\sigma^{0;\top}(t,x^j)\nabla_{x^ix^j}^2\overline{v}_{\varepsilon,n,m}(t,\overline{x})\Big]\Bigg\}\mu(dx^1)\otimes\ldots\otimes \mu(dx^n)\\
					&=0\h{5pt} \text{for any $(t,\mu) \in [0,T) \times\mathcal{P}_2(\mathbb{R}^{d})$};\\
					& u(T,\mu)
					=\dfrac{1}{n}\sum^n_{i=1}\int_{\mathbb{R}^{dn}}g^i_{n,m}(\overline{x})\mu(dx^1)\otimes\ldots\otimes\mu(dx^n)
					\h{5pt} \text{for any $\mu \in \mathcal{P}_2(\mathbb{R}^{d})$}.
				\end{aligned}
				\right.    
			\end{equation*}
		\end{enumerate}
		\label{thm v_e,n,m}
	\end{theorem}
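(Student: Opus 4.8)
The plan is to prove the four assertions in the order (1), then (3), then (2), then (4): assertion (1) is an identity about passing from a random to a deterministic initial condition, (3) drops out of (1), the $C^{1,2}$-regularity in (2) is read off from the representation in (1) by differentiating under the integral sign, and (4) is nothing but the integrated form of the finite-dimensional Bellman equation \eqref{eq. bellman bar v_e,n,m}.

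For (1), fix $\overline{\mu}\in\mathcal{P}_2(\mathbb{R}^{dn})$ and an $\overline{\mathcal{G}}$-measurable random vector $\overline{\xi}$ with $\mathcal{L}(\overline{\xi})=\overline{\mu}$, independent of all the Brownian motions $\overline{W}^0,\overline{W}^i,\overline{B}^i$. I would first establish $\widetilde{v}_{\varepsilon,n,m}(t,\overline{\mu})=\mathbb{E}\big[\overline{v}_{\varepsilon,n,m}(t,\overline{\xi})\big]$, which in particular re-proves the law invariance used to define $\widetilde{v}_{\varepsilon,n,m}$. For ``$\leq$'': given $\overline{\alpha}\in\overline{\mathcal{A}}^n_t$, condition on $\overline{\mathcal{G}}$; since the Brownian increments after $t$ are independent of $\overline{\mathcal{G}}$ and $\overline{\mathbb{F}}^t\supseteq\overline{\mathcal{G}}$, for $\overline{\mathbb{P}}$-a.e.\ realization $\overline{\xi}=\overline{x}$ the process $\overline{\alpha}$ becomes an admissible control for the deterministic-initial-condition problem started at $\overline{x}$, so the conditional expectation given $\overline{\mathcal{G}}$ of the cost realized by $\overline{\alpha}$ is at most $\overline{v}_{\varepsilon,n,m}(t,\overline{\xi})$ a.s.; taking expectations gives $J_{\varepsilon,n,m}^*(t,\overline{\xi},\overline{\alpha})\leq\mathbb{E}[\overline{v}_{\varepsilon,n,m}(t,\overline{\xi})]$. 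For ``$\geq$'': by a measurable-selection argument, for $\eta>0$ there is a Borel map $\overline{x}\mapsto\overline{\alpha}^{\overline{x}}\in\overline{\mathcal{A}}^n_t$ with $J_{\varepsilon,n,m}^*(t,\overline{x},\overline{\alpha}^{\overline{x}})\geq\overline{v}_{\varepsilon,n,m}(t,\overline{x})-\eta$; substituting $\overline{x}=\overline{\xi}$ yields an $\overline{\mathbb{F}}^t$-progressively measurable control (as $\overline{\mathbb{F}}^t\supseteq\overline{\mathcal{G}}$), whence $\widetilde{v}_{\varepsilon,n,m}(t,\overline{\mu})\geq\mathbb{E}[\overline{v}_{\varepsilon,n,m}(t,\overline{\xi})]-\eta$; let $\eta\downarrow0$. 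This is the same mechanism as the law-invariance arguments of \cite[Appendix B]{cosso2023optimal} and \cite[Proposition 3.3]{cheung2023viscosity}. Now take $\overline{\mu}=\mu^{\otimes n}$, so the components of $\overline{\xi}$ are i.i.d.\ with law $\mu$; Fubini's theorem gives $\mathbb{E}[\overline{v}_{\varepsilon,n,m}(t,\overline{\xi})]=\int_{\mathbb{R}^{dn}}\overline{v}_{\varepsilon,n,m}(t,x^1,\dots,x^n)\,\mu(dx^1)\cdots\mu(dx^n)$, which is \eqref{eq. v_e,n,m=int bar of v_e,n,m}. Assertion (3) is then immediate: Lemma~\ref{lem estimate of b^i_n,m...}(1) gives $|f^i_{n,m}|\vee|g^i_{n,m}|\leq K$, hence $|\overline{v}_{\varepsilon,n,m}(t,\overline{x})|\leq(1+T)K$ pointwise in $\overline{x}$, and (1) yields $|v_{\varepsilon,n,m}(t,\mu)|\leq(1+T)K=:\ell_2$.

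For (2), I would differentiate \eqref{eq. v_e,n,m=int bar of v_e,n,m}. Observe that $\overline{v}_{\varepsilon,n,m}$ is invariant under permutations of its $n$ $\mathbb{R}^d$-blocks (the mollified coefficients $b^i_{n,m},f^i_{n,m},g^i_{n,m}$ depend on the block index only through the corresponding coordinate and the symmetric empirical measure $\frac{1}{n}\sum_j\delta_{x^j-y^j}$), and $\overline{v}_{\varepsilon,n,m}\in C^{1,2}([0,T]\times\mathbb{R}^{dn})$ with the derivative bounds of Lemma~\ref{finite_derivative_estimate}. The standard differentiation rule for functionals $\mu\mapsto\int_{\mathbb{R}^{dn}}h\,d\mu^{\otimes n}$ with $h\in C^2$ of suitable growth (see \cite{carmona_probabilistic_2018_vol1}), combined with this symmetry, gives
\begin{align*}
\partial_\mu v_{\varepsilon,n,m}(t,\mu)(y)&=n\int_{\mathbb{R}^{d(n-1)}}\nabla_{x^1}\overline{v}_{\varepsilon,n,m}(t,y,x^2,\dots,x^n)\prod_{j=2}^n\mu(dx^j),\\
\nabla_y\partial_\mu v_{\varepsilon,n,m}(t,\mu)(y)&=n\int_{\mathbb{R}^{d(n-1)}}\nabla^2_{x^1x^1}\overline{v}_{\varepsilon,n,m}(t,y,x^2,\dots,x^n)\prod_{j=2}^n\mu(dx^j),\\
\partial^2_\mu v_{\varepsilon,n,m}(t,\mu)(y,y')&=n(n-1)\int_{\mathbb{R}^{d(n-2)}}\nabla^2_{x^1x^2}\overline{v}_{\varepsilon,n,m}(t,y,y',x^3,\dots,x^n)\prod_{j=3}^n\mu(dx^j),
\end{align*}
while $\partial_t v_{\varepsilon,n,m}(t,\mu)=\int_{\mathbb{R}^{dn}}\partial_t\overline{v}_{\varepsilon,n,m}(t,\overline{x})\,\mu^{\otimes n}(d\overline{x})$. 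Joint continuity of these maps in $(t,\mu,y,y')\in[0,T]\times(\mathcal{P}_2(\mathbb{R}^d),\mathcal{W}_2)\times\mathbb{R}^d\times\mathbb{R}^d$ follows from the joint continuity of the finite-dimensional derivatives (Lemma~\ref{finite_derivative_estimate}(3)) together with dominated convergence, using that $\mathcal{W}_1$-convergence (hence $\mathcal{W}_2$-convergence) of $\mu_k$ implies weak convergence of $\mu_k^{\otimes(n-1)}$ and that the integrands are bounded by Lemma~\ref{finite_derivative_estimate}(1)--(2). Those bounds give $|\partial_\mu v_{\varepsilon,n,m}(t,\mu)(y)|\leq C_4\leq C_4(1+|y|)$ and $|\nabla_y\partial_\mu v_{\varepsilon,n,m}(t,\mu)(y)|,\,|\partial^2_\mu v_{\varepsilon,n,m}(t,\mu)(y,y')|\leq C$ for some $C=C(d,n,m,\varepsilon)$; the bound on $\partial_t v_{\varepsilon,n,m}$ is read off from \eqref{eq. bellman bar v_e,n,m} using the boundedness of $f^i_{n,m}$, the growth estimate of $b^i_{n,m}$ in Lemma~\ref{lem estimate of b^i_n,m...}(1), Assumption~\ref{assume:A}(2), and the first- and second-order derivative bounds just recorded. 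This shows $v_{\varepsilon,n,m}\in C^{1,2}([0,T]\times\mathcal{P}_2(\mathbb{R}^d))$.

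Finally, (4) follows by integrating \eqref{eq. bellman bar v_e,n,m} against $\mu^{\otimes n}$: by (1)--(2) one may differentiate \eqref{eq. v_e,n,m=int bar of v_e,n,m} in $t$ under the integral, so $\partial_t v_{\varepsilon,n,m}(t,\mu)=\int_{\mathbb{R}^{dn}}\partial_t\overline{v}_{\varepsilon,n,m}(t,\overline{x})\,\mu^{\otimes n}(d\overline{x})$, and replacing $\partial_t\overline{v}_{\varepsilon,n,m}(t,\overline{x})$ by $-\sup_{\overline{a}\in A^n}\{\cdots\}$ from \eqref{eq. bellman bar v_e,n,m} produces exactly the equation in assertion (4); the terminal condition is immediate from $\overline{v}_{\varepsilon,n,m}(T,\overline{x})=\frac{1}{n}\sum_{i=1}^n g^i_{n,m}(\overline{x})$. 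I expect the main obstacle to lie in step (2): justifying the differentiation-under-the-integral identities with the correct symmetry factors and, more delicately, establishing the joint continuity of the $L$-derivatives in the $\mathcal{W}_2$-topology together with the uniform bounds demanded by Definition~\ref{def. of C1,2}. In particular, since the mollified drift and volatility are only of sublinear growth $|x|^\rho$ with $\rho<1$, controlling $\partial_t v_{\varepsilon,n,m}$ through \eqref{eq. bellman bar v_e,n,m} is precisely where the growth restrictions of Assumption~\ref{assume:A} enter; everything else is routine given Lemmas~\ref{lem estimate of b^i_n,m...} and \ref{finite_derivative_estimate}.
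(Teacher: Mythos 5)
Your proposal follows essentially the same route as the paper's proof: item (1) is obtained by the conditioning/measurable-selection mechanism of law invariance followed by Fubini (the paper outsources this to Steps I--II of \cite[Theorem A.7]{cosso_master_2022}), item (3) is read off from (1) and the bounds on $f^i_{n,m},g^i_{n,m}$, item (2) by differentiating \eqref{eq. v_e,n,m=int bar of v_e,n,m} using the derivative bounds of Lemma~\ref{finite_derivative_estimate}, and item (4) by integrating \eqref{eq. bellman bar v_e,n,m} against $\mu^{\otimes n}$. The only cosmetic difference is that you simplify the $L$-derivative formulas using the permutation symmetry of $\overline{v}_{\varepsilon,n,m}$, whereas the paper keeps the full sum over indices; this is correct and equivalent.
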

	\begin{proof} The proof of item (1) follows from Steps I-II of the proof of \cite[Theorem A.7]{cosso_master_2022}, together with Lemma \ref{lem. classical sol. of smooth approx.}. The function $\overline{v}_{\varepsilon,n,m}(t,\overline{x})$ is in $C^{1,2}([0,T] \times \mathbb{R}^{dn})$ from Lemma \ref{lem. classical sol. of smooth approx.}  and its boundedness is ensured by its definition. Then, item (2) follows by differentiating \eqref{eq. v_e,n,m=int bar of v_e,n,m} and item (3) is ensured by the boundedness of $\overline{v}_{\varepsilon,n,m}(t,\overline{x})$. Item (4) is easily obtained by integrating \eqref{eq. bellman bar v_e,n,m} with $u=\overline{v}_{\varepsilon,n,m}$.
 \end{proof}

	\begin{lemma}
		Suppose that Assumption \ref{assume:A} holds. There exists a positive constant $C_6=C_6(K,T,d)$ such that for any $n,m\in\mathbb{N}$ and $(t,\mu) \in [0,T] \times \mathcal{P}_2(\mathbb{R}^d)$, we have $|v_{\varepsilon,n,m}(t,\mu)-v_{0,n,m}(t,\mu)|\leq C_6 \varepsilon$.
		\label{lem. |v_e,n,m-v_0,n,m| < C_6e}
	\end{lemma}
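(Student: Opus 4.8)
The plan is to follow the template of Lemma~\ref{lem. |v_e-v_0|<C_5 e}, the one genuinely new issue being to verify that the resulting constant is independent of $n$ and $m$. First I would fix $(t,\mu)\in[0,T]\times\mathcal{P}_2(\mathbb{R}^d)$ and choose $\overline{\xi}\in L^2(\overline{\Omega}^1,\overline{\mathcal{F}}^1,\overline{\mathbb{P}}^1;\mathbb{R}^{dn})$ with $\mathcal{L}(\overline{\xi})=\mu\otimes\cdots\otimes\mu$. For an arbitrary admissible control $\overline{\alpha}\in\overline{\mathcal{A}}^n_t$, I set $X^i_s:=\overline{X}^{i,m,\varepsilon,t,\overline{\xi},\overline{\alpha}}_s$ and $Y^i_s:=\overline{X}^{i,m,0,t,\overline{\xi},\overline{\alpha}}_s$, which solve \eqref{eq. state perturbed by e BM and smoothing} driven by the same noises and control, differing only through the extra increment $\varepsilon(\overline{B}^i_s-\overline{B}^i_t)$ present in the first. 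Since the admissible set does not depend on $\varepsilon$ and $|\sup_{\overline{\alpha}}a-\sup_{\overline{\alpha}}b|\le\sup_{\overline{\alpha}}|a-b|$, by \eqref{def. v_e=V_e mfg} it suffices to bound $|J_{\varepsilon,n,m}^*(t,\overline{\xi},\overline{\alpha})-J_{0,n,m}^*(t,\overline{\xi},\overline{\alpha})|$ uniformly in $\overline{\alpha}$, $n$, $m$.

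The core step is a particle-system stability estimate. Subtracting the two copies of \eqref{eq. state perturbed by e BM and smoothing}, squaring, and taking the supremum over time and expectation, I would use the Burkholder--Davis--Gundy inequality together with the Lipschitz bound of Lemma~\ref{lem estimate of b^i_n,m...}(3) for $b^i_{n,m}$ and Assumption~\ref{assume:A}(1) for $\sigma,\sigma^0$ to obtain, for $\delta_s:=\sum_{i=1}^n\mathbb{E}\big[\sup_{r\in[t,s]}|X^i_r-Y^i_r|^2\big]$, a Gronwall-type inequality $\delta_s\le C\int_t^s\delta_r\,dr+Cn\varepsilon^2$ with $C=C(K,T,d)$. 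The decisive point is that the empirical-average form $\tfrac1n\sum_j|x^j-z^j|$ appearing in Lemma~\ref{lem estimate of b^i_n,m...}(3) satisfies $\sum_{i=1}^n\tfrac1n\sum_{j=1}^n\mathbb{E}|X^j_r-Y^j_r|^2=\sum_{j=1}^n\mathbb{E}|X^j_r-Y^j_r|^2\le\delta_r$, so summing over $i$ does not generate a factor of $n$ in $C$, while the inhomogeneity is just $n$ copies of $\varepsilon^2\mathbb{E}\big[\sup_{r\in[t,T]}|\overline{B}^i_r-\overline{B}^i_t|^2\big]\le\varepsilon^2dT$. Gronwall then gives $\delta_T\le C'n\varepsilon^2$ with $C'=C'(K,T,d)$.

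Finally, applying the Lipschitz bound of Lemma~\ref{lem estimate of b^i_n,m...}(3) to $f^i_{n,m}$ and $g^i_{n,m}$ and then the Cauchy--Schwarz inequality $\sum_{i=1}^n a_i\le\sqrt{n}\,(\sum_{i=1}^n a_i^2)^{1/2}$,
\[
|J_{\varepsilon,n,m}^*(t,\overline{\xi},\overline{\alpha})-J_{0,n,m}^*(t,\overline{\xi},\overline{\alpha})|
\le \frac{2K(T+1)}{n}\sum_{i=1}^n\sup_{s\in[t,T]}\mathbb{E}|X^i_s-Y^i_s|
\le \frac{2K(T+1)}{n}\sqrt{n}\,\sqrt{\delta_T}
\le 2K(T+1)\sqrt{C'}\,\varepsilon,
\]
uniformly in $\overline{\alpha}$, $n$, $m$; taking the supremum over $\overline{\alpha}\in\overline{\mathcal{A}}^n_t$ and recalling \eqref{def. v_e=V_e mfg} yields the claim with $C_6:=2K(T+1)\sqrt{C'}$. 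The step I expect to be delicate is precisely this $n$-uniformity bookkeeping: a crude estimate would only give $\sum_i\mathbb{E}|X^i_s-Y^i_s|=O(n\varepsilon)$, which is rescued by the prefactor $1/n$ only after Cauchy--Schwarz, and the empirical-average structure of the Lipschitz constants in Lemma~\ref{lem estimate of b^i_n,m...}(3) is exactly what keeps the Gronwall constant free of $n$; $m$-uniformity, by contrast, is immediate because every Lipschitz constant there equals $K$.
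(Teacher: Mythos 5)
Your proof is correct and is essentially the argument the paper has in mind: the paper's proof consists only of the remark that it "follows the same approach as the proof of Lemma \ref{lem. |v_e-v_0|<C_5 e}, with the aid of Lemma \ref{lem estimate of b^i_n,m...}," which is exactly your $L^2$ particle-stability estimate followed by the Lipschitz bounds of Lemma \ref{lem estimate of b^i_n,m...}. Your bookkeeping correctly identifies and resolves the one nontrivial point (uniformity of $C_6$ in $n$), exploiting both the empirical-average form of the Lipschitz constants and the final Cauchy--Schwarz step.
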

	\begin{proof}
This proof follows the same approach as the proof of Lemma \ref{lem. |v_e-v_0|<C_5 e}, with the aid of Lemma \ref{lem estimate of b^i_n,m...}. The details are omitted here.
	\end{proof}

	\begin{lemma}
	Suppose that Assumption \ref{assume:A} holds. Let $\varepsilon>0$ and $(t,\mu) \in [0,T] \times \mathcal{P}_2(\mathbb{R}^d)$. If there exists $q>2$ such that $ \mu \in \mathcal{P}_q(\mathbb{R}^d)$, then $\displaystyle\lim_{n\to \infty}\lim_{m\to \infty} v_{\varepsilon,n,m}(t,\mu)=v_\varepsilon(t,\mu)$ where $v_\varepsilon(t,\mu)$ is defined in \eqref{def. v_e=V_e mfc}.
		\label{lem. conv. of v_e,n,m to v_e}
	\end{lemma}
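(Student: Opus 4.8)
The plan is to pass to the two limits one at a time, exploiting the representation $v_{\varepsilon,n,m}(t,\mu)=\int_{\mathbb{R}^{dn}}\overline v_{\varepsilon,n,m}(t,\overline x)\,\mu^{\otimes n}(d\overline x)$ from Theorem~\ref{thm v_e,n,m}(1), and then identifying $\lim_{n}\lim_{m}v_{\varepsilon,n,m}(t,\mu)$ with the value $v_\varepsilon(t,\mu)$ of the perturbed mean field control problem \eqref{eq. perturbed, mfc}--\eqref{def. v_e=V_e mfc}. Throughout, the first limit is a soft stability argument valid for every $\mu\in\mathcal{P}_2(\mathbb{R}^d)$, while the extra moment $q>2$ is only invoked in the second limit.

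\textbf{Step 1 ($m\to\infty$, $n$ fixed).} I would introduce the un-mollified $n$-particle coefficients $b^i_{n,\infty}(t,\overline x,a):=b(t,x^i,\widehat\mu^{n,\overline x},a)$ and, analogously, $f^i_{n,\infty},g^i_{n,\infty}$, let $\overline v_{\varepsilon,n,\infty}$ be the value function of the corresponding $n$-particle cooperative control problem (same dynamics as \eqref{eq. state perturbed by e BM and smoothing} but with $b^i_{n,\infty}$), and set $v_{\varepsilon,n,\infty}(t,\mu):=\int_{\mathbb{R}^{dn}}\overline v_{\varepsilon,n,\infty}(t,\overline x)\mu^{\otimes n}(d\overline x)$. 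By Lemma~\ref{lem estimate of b^i_n,m...}(1) the drifts $b^i_{n,m}$ have sublinear growth in $x^i$ uniformly in $m\geq1$, so, together with the growth bounds on $\sigma,\sigma^0$ in Assumption~\ref{assume:A}, the state \eqref{eq. state perturbed by e BM and smoothing} has moments bounded uniformly in $m$ and in $\overline\alpha$. Combining this with the uniform Lipschitz estimates of Lemma~\ref{lem estimate of b^i_n,m...}(3) and the uniform convergences $b^i_{n,m}\to b^i_{n,\infty}$, $f^i_{n,m}\to f^i_{n,\infty}$, $g^i_{n,m}\to g^i_{n,\infty}$ of Lemma~\ref{lem estimate of b^i_n,m...}(4), a Gronwall estimate on the difference of the two particle systems gives $\sup_{\overline x,\overline\alpha}\mathbb{E}\big[\sup_{s}|\overline X^{m,\varepsilon,t,\overline x,\overline\alpha}_s-\overline X^{\infty,\varepsilon,t,\overline x,\overline\alpha}_s|^2\big]\to0$ as $m\to\infty$. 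Since $f^i_{n,m},g^i_{n,m}$ are uniformly bounded by $K$, plugging these bounds into $J^*_{\varepsilon,n,m}$ and taking the supremum over controls yields $\sup_{\overline x}|\overline v_{\varepsilon,n,m}(t,\overline x)-\overline v_{\varepsilon,n,\infty}(t,\overline x)|\to0$, hence $|v_{\varepsilon,n,m}(t,\mu)-v_{\varepsilon,n,\infty}(t,\mu)|\leq\sup_{\overline x}|\overline v_{\varepsilon,n,m}-\overline v_{\varepsilon,n,\infty}|\to0$ for every $n$ and every $\mu\in\mathcal{P}_2(\mathbb{R}^d)$.

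\textbf{Step 2 ($n\to\infty$).} It remains to prove $v_{\varepsilon,n,\infty}(t,\mu)\to v_\varepsilon(t,\mu)$ when $\mu\in\mathcal{P}_q(\mathbb{R}^d)$ for some $q>2$. By construction $v_{\varepsilon,n,\infty}(t,\mu)$ is the value of the cooperative control problem for $n$ particles started from i.i.d.\ $\mu$-samples, interacting through their empirical measure, driven by a shared common noise $\overline W^0$ and independent idiosyncratic and regularizing noises. I would establish the two inequalities following the limit theorem of \cite{cosso_master_2022} (see also \cite{cosso2023optimal}). For $\liminf_n v_{\varepsilon,n,\infty}(t,\mu)\geq v_\varepsilon(t,\mu)$: given $\eta>0$, pick an $\eta$-optimal $\check\alpha\in\check{\mathcal{A}}_t$ for $v_\varepsilon(t,\mu)$ and replicate it across particles (the $i$-th copy using the $i$-th idiosyncratic noise, regularizing noise and initial variable); conditional propagation of chaos forces $\widehat\mu^n_s\to\mathbb{P}^{W^0}_{X_s}$ in $\mathcal{W}_1$ along the trajectories, and the $\mathcal{W}_1$-Lipschitz continuity of the coefficients (Assumption~\ref{assume:A}) together with the moment estimates of Proposition~\ref{prop. property of X} let one pass to the limit in the averaged cost, giving $\liminf_n v_{\varepsilon,n,\infty}(t,\mu)\geq J_\varepsilon(t,\mu,\check\alpha)\geq v_\varepsilon(t,\mu)-\eta$; here $\mathcal{P}_2$ suffices. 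For $\limsup_n v_{\varepsilon,n,\infty}(t,\mu)\leq v_\varepsilon(t,\mu)$: take near-optimal $n$-particle control vectors $\overline\alpha^n$, form the empirical measures $\tfrac1n\sum_i\delta_{(X^{n,i},\alpha^{n,i})}$ on path$\times$control space, use the uniform bound $\sup_n\mathbb{E}[(\tfrac1n\sum_i\|X^{n,i}\|_\infty^2)^{q/2}]<\infty$ --- \emph{this is the only place the $\mathcal{P}_q$, $q>2$, hypothesis is used, via convexity of $r\mapsto r^{q/2}$ and $\mu$-moments of the i.i.d.\ starting points, to get tightness of these empirical measures in the $\mathcal{W}_2$-topology} --- extract a subsequential weak limit, identify it as a (relaxed) admissible control for \eqref{eq. perturbed, mfc}, and bound its cost by $v_\varepsilon(t,\mu)$, using that the relaxed and strong mean field control values coincide under Assumption~\ref{assume:A}. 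Combining the two inequalities with Step~1 yields the claim.

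\textbf{Main obstacle.} The crux is the $\limsup$ inequality in Step~2: producing a limiting (McKean--Vlasov) control from a sequence of strongly coupled, possibly open-loop $n$-particle controls requires the compactness argument on the canonical path--control space, the identification of the subsequential limit as a solution of the controlled McKean--Vlasov (relaxed) martingale problem with the non-degenerate $\varepsilon$-perturbation, and the equivalence of relaxed and strong mean field control values. This is exactly where the extra moment $q>2$ is indispensable and where the proof leans on the machinery of \cite{cosso_master_2022}; the remaining ingredients (Step~1, and the $\liminf$ half of Step~2) are routine given the estimates of Lemma~\ref{lem estimate of b^i_n,m...}, Proposition~\ref{prop. property of X} and Theorem~\ref{thm v_e,n,m}.
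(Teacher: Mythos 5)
Your proposal is correct and mirrors the paper's approach: the paper's proof simply defers to the two-step limit of \cite[Theorem A.6]{cosso_master_2022} (mollification limit $m\to\infty$, then $n\to\infty$), substituting the mean-field limit theorems of \cite{limit_theory_tan} for the $n\to\infty$ step to handle the common noise, which is precisely the relaxed-control/propagation-of-chaos machinery you invoke in Step~2. You also correctly locate the role of the $\mathcal{P}_q$, $q>2$, hypothesis (tightness of the empirical path-control laws) and observe that the $m\to\infty$ step is a soft uniform stability estimate valid for any $\mu\in\mathcal{P}_2$.
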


	\begin{proof}
		The proof follows the same reasoning as \cite[Theorem A.6]{cosso_master_2022}, but with \cite[Theorems 3.1, 3.6]{limit_theory_tan} replacing the limit theory in \cite{cosso_master_2022} when passing to the limit $n \to \infty$ in $\lim_{m\to \infty} v_{\varepsilon,n,m}(t,\mu)$, because of the appearance of the common noise.
		
		\mycomment{ {\bf Step 1. Convergence of $\overline{X}^{m,\varepsilon ,t,\overline{\xi},\overline{\alpha}}
				$ to $\overline{X}^{\varepsilon ,t,\overline{\xi},\overline{\alpha}}$ as $m\to \infty$:}\\
			Recalling the solutions of \eqref{eq. state perturbed by e BM} and \eqref{eq. state perturbed by e BM and smoothing}, we consider
			\begin{align*}
				&\h{-10pt}\Big|\overline{X}^{i,m,\varepsilon ,t,\overline{\xi},\overline{\alpha}}_s
				-\overline{X}^{i,\varepsilon ,t,\overline{\xi},\overline{\alpha}}_s\Big|\\
				\leq\,& \bigg|\int_{t}^s b_{n,m}^i(r,\overline{X}^{m,\varepsilon ,t,\overline{\xi},\overline{\alpha}}_{r},\overline{\alpha}_r^i)
				-b_{n,m}^i(r,\overline{X}^{\varepsilon ,t,\overline{\xi},\overline{\alpha}}_{r},\overline{\alpha}_r^i)dr+\int_{t}^s b_{n,m}^i(r,\overline{X}^{\varepsilon ,t,\overline{\xi},\overline{\alpha}}_{r},\overline{\alpha}_r^i)
				-b(r,\overline{X}^{i,\varepsilon ,t,\overline{\xi},\overline{\alpha}}_{r},\widehat{\mu}^n_r,\overline{\alpha}_r^i)dr\\
				&\h{5pt}+ \int_t^s \sigma (r,\overline{X}^{i,m,\varepsilon ,t,\overline{\xi},\overline{\alpha}}_{r},\overline{\alpha}_r^i)
				-\sigma(r,\overline{X}^{i,\varepsilon ,t,\overline{\xi},\overline{\alpha}}_{r},\overline{\alpha}_r^i)d\overline{W}_r^i\\
				&\h{5pt}+ \int_t^s \int_{\mathbb{R}^d}\gamma_{n,m}^i(r,\overline{X}^{m,\varepsilon ,t,\overline{\xi},\overline{\alpha}}_{r},\overline{\alpha}_r^i,z)
				-\gamma_{n,m}^i(r,\overline{X}^{\varepsilon ,t,\overline{\xi},\overline{\alpha}}_{r},\overline{\alpha}_r^i,z)\overline{\tilde{N}}^i(dz,dr)\\
				&\h{5pt}+ \int_t^s \int_{\mathbb{R}^d}\gamma_{n,m}^i(r,\overline{X}^{\varepsilon ,t,\overline{\xi},\overline{\alpha}}_{r},\overline{\alpha}_r^i,z)
				-\gamma(r,\overline{X}^{i,\varepsilon ,t,\overline{\xi},\overline{\alpha}}_{r},\widehat{\mu}^n_r,\overline{\alpha}_r^i,z)\overline{\tilde{N}}^i(dz,dr)
				\bigg|.
			\end{align*}
			By Lemma \ref{lem estimate of b^i_n,m...}, the Burkholder-Davis-Gundy inequality and H\"older inequality, we have
			\begin{align*}
				&\h{-10pt}\mathbb{E}\Big[\pig|\overline{X}^{i,m,\varepsilon ,t,\overline{\xi},\overline{\alpha}}_s
				-\overline{X}^{i,\varepsilon ,t,\overline{\xi},\overline{\alpha}}_s\pigr|\Big]\\
				\leq\,& K\int_{t}^s 
				\Bigg\{\mathbb{E}\Big[\pig|\overline{X}^{i,m,\varepsilon ,t,\overline{\xi},\overline{\alpha}}_{r}
				-\overline{X}^{i,\varepsilon ,t,\overline{\xi},\overline{\alpha}}_{r}\pigr|\Big]
				+\dfrac{1}{n}\sum^n_{j=1}\mathbb{E}\Big[\pig|\overline{X}^{j,m,\varepsilon ,t,\overline{\xi},\overline{\alpha}}_{r}
				-\overline{X}^{j,\varepsilon ,t,\overline{\xi},\overline{\alpha}}_{r}\pigr|\Big]\\
				&\h{35pt}+m\int_{\mathbb{R}}\left|r-\pig[T\wedge(r-\tau)^+\pig]\right|^\beta \phi(m\tau)d\tau
				+m^{dn}\int_{\mathbb{R}^{dn}}
				\left(|y_i|+\dfrac{1}{n}\sum^n_{j=1}|y_j|\right) \prod^n_{j=1}\Phi(my_j)dy_j\Bigg\}dr\\
				& + \int_{t}^s\int_{\mathbb{R}^d}\rho(z)\Bigg\{ 
				\mathbb{E}\Big[\pig|\overline{X}^{i,m,\varepsilon ,t,\overline{\xi},\overline{\alpha}}_{r}
				-\overline{X}^{i,\varepsilon ,t,\overline{\xi},\overline{\alpha}}_{r}\pigr|\Big]
				+\dfrac{1}{n}\sum^n_{j=1}\mathbb{E}\Big[\pig|\overline{X}^{j,m,\varepsilon ,t,\overline{\xi},\overline{\alpha}}_{r}
				-\overline{X}^{j,\varepsilon ,t,\overline{\xi},\overline{\alpha}}_{r}\pigr|\Big]\\
				&\h{35pt}+m\int_{\mathbb{R}}\left|r-\pig[T\wedge(r-\tau)^+\pig]\right|^\beta \phi(m\tau)d\tau
				+m^{dn}\int_{\mathbb{R}^{dn}}
				\left(|y_i|+\dfrac{1}{n}\sum^n_{j=1}|y_j|\right) \prod^n_{j=1}\Phi(my_j)dy_j\Bigg\}\overline{\tilde{N}}^i(dz,dr)\\
				&+C_{\textup{BDG}}\mathbb{E}\left[\int^s_t\pig|\overline{X}^{i,m,\varepsilon ,t,\overline{\xi},\overline{\alpha}}_{r}
				-\overline{X}^{i,\varepsilon ,t,\overline{\xi},\overline{\alpha}}_{r}\pigr|^2dr\right]^{1/2}\\
				\leq\,& K\int_{t}^s 
				\Bigg\{\mathbb{E}\Big[\pig|\overline{X}^{i,m,\varepsilon ,t,\overline{\xi},\overline{\alpha}}_{r}
				-\overline{X}^{i,\varepsilon ,t,\overline{\xi},\overline{\alpha}}_{r}\pigr|\Big]
				+\dfrac{1}{n}\sum^n_{j=1}\mathbb{E}\Big[\pig|\overline{X}^{j,m,\varepsilon ,t,\overline{\xi},\overline{\alpha}}_{r}
				-\overline{X}^{j,\varepsilon ,t,\overline{\xi},\overline{\alpha}}_{r}\pigr|\Big]\\
				&\h{35pt}+m\int_{\mathbb{R}}\left|r-\pig[T\wedge(r-\tau)^+\pig]\right|^\beta \phi(m\tau)d\tau
				+m^{dn}\int_{\mathbb{R}^{dn}}
				\left(|y_i|+\dfrac{1}{n}\sum^n_{j=1}|y_j|\right) \prod^n_{j=1}\Phi(my_j)dy_j\Bigg\}dr\\
				& + \left\{\int_{t}^s\int_{\mathbb{R}^d}\pig[\rho(z)\pigr]^2\overline{\tilde{N}}^i(dz,dr)\right\}^{1/2}\cdot\\
				&
				\h{15pt}\Bigg\{\int_{t}^s\int_{\mathbb{R}^d}\Bigg\{ 
				\mathbb{E}\Big[\pig|\overline{X}^{i,m,\varepsilon ,t,\overline{\xi},\overline{\alpha}}_{r}
				-\overline{X}^{i,\varepsilon ,t,\overline{\xi},\overline{\alpha}}_{r}\pigr|\Big]
				+\dfrac{1}{n}\sum^n_{j=1}\mathbb{E}\Big[\pig|\overline{X}^{j,m,\varepsilon ,t,\overline{\xi},\overline{\alpha}}_{r}
				-\overline{X}^{j,\varepsilon ,t,\overline{\xi},\overline{\alpha}}_{r}\pigr|\Big]\\
				&\h{35pt}+m\int_{\mathbb{R}}\left|r-\pig[T\wedge(r-\tau)^+\pig]\right|^\beta \phi(m\tau)d\tau
				+m^{dn}\int_{\mathbb{R}^{dn}}
				\left(|y_i|+\dfrac{1}{n}\sum^n_{j=1}|y_j|\right) \prod^n_{j=1}\Phi(my_j)dy_j\Bigg\}^2\overline{\tilde{N}}^i(dz,dr)\Bigg\}^{1/2}\\
				&+C_{\textup{BDG}}\mathbb{E}\left[\int^s_t\pig|\overline{X}^{i,m,\varepsilon ,t,\overline{\xi},\overline{\alpha}}_{r}
				-\overline{X}^{i,\varepsilon ,t,\overline{\xi},\overline{\alpha}}_{r}\pigr|^2dr\right]^{1/2}
			\end{align*}
			
			Sending $m \to \infty$, we see that $\mathbb{E}\Big[\pig|\overline{X}^{m,\varepsilon ,t,\overline{\xi},\overline{\alpha}}_s
			-\overline{X}^{\varepsilon ,t,\overline{\xi},\overline{\alpha}}_s\pigr|\Big] \longrightarrow 0$.
			
			 \noindent{\bf Step 2. Convergence of $v_{\varepsilon,n,m}(t,\mu) 
				$ to $v_{\varepsilon}(t,\mu) $ as $m\to \infty$ and then $n \to \infty$:} Recalling the definition of $v_{\varepsilon,n,m}$ in \eqref{def. v_e=V_e mfg}, \eqref{def. tilde v_e,n,m} and that of $v_{\varepsilon,n}$ in \eqref{def. v_e=V_e mfg}, \eqref{def. tilde v_e,n}, we use Lemma \ref{lem estimate of b^i_n,m...} and the convergence of $\overline{X}^{m,\varepsilon ,t,\overline{\xi},\overline{\alpha}}_s$ to obtain that $\lim_{m \to \infty} v_{\varepsilon,n,m}= v_{\varepsilon,n}$
		}
		
	\end{proof}

\section{Viscosity Solution Theory}
In this section, we prove the main result of the article, which is the existence and uniqueness of the viscosity solution of the HJB equation in \eqref{HJB_intro}. We first need the following lemma.
\begin{lemma}
        \label{v_W1_continuous}
            The value function $v(t,\mu)$ defined \eqref{value_function_after_law_invariance} and its finite-dimensional approximation $v_{\varepsilon,n,m}(t,\mu)$ defined \eqref{def. v_e=V_e mfg} satisfy
            $$|v_{\varepsilon,n,m}(t,\mu)-v_{\varepsilon,n,m}(t,\mu')|
                \leq C_4\mathcal{W}_1(\mu,\mu'),\quad
                |v (t,\mu)-v (t,\mu')|
                \leq C_4\mathcal{W}_1(\mu,\mu')$$
for any $(t,\mu,\mu')\in [0,T] \times \mathcal{P}_2(\mathbb{R}^d)\times \mathcal{P}_2(\mathbb{R}^d)$, where $C_4$ is given in Lemma \ref{lem. classical sol. of smooth approx.}.
        \end{lemma}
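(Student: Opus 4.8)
The plan is to establish the estimate first for the finite-dimensional approximation $v_{\varepsilon,n,m}$, where the gradient bound of Lemma~\ref{lem. classical sol. of smooth approx.} makes it nearly immediate, and then transfer it to $v$ by successively letting $m\to\infty$, $n\to\infty$, $\varepsilon\downarrow 0$, and finally by a density argument in $\mathcal{P}_2(\mathbb{R}^d)$. For the first inequality: by Lemma~\ref{lem. classical sol. of smooth approx.} the map $\overline{x}\mapsto\overline{v}_{\varepsilon,n,m}(t,\overline{x})$ is continuously differentiable with $|\nabla_{x^i}\overline{v}_{\varepsilon,n,m}(t,\overline{x})|\le C_4/n$, so integrating the gradient along the segment joining $\overline{x}=(x^1,\ldots,x^n)$ and $\overline{y}=(y^1,\ldots,y^n)$ gives
\[
\big|\overline{v}_{\varepsilon,n,m}(t,\overline{x})-\overline{v}_{\varepsilon,n,m}(t,\overline{y})\big|\le \frac{C_4}{n}\sum_{i=1}^n|x^i-y^i|.
\]
Fix $t$ and $\mu,\mu'\in\mathcal{P}_2(\mathbb{R}^d)$, pick a $\mathcal{W}_1$-optimal coupling $(X,Y)$ of $(\mu,\mu')$, i.e.\ $\mathbb{E}|X-Y|=\mathcal{W}_1(\mu,\mu')$, and take i.i.d.\ copies $(X^i,Y^i)_{i=1}^n$ of $(X,Y)$, so that $\mathcal{L}(X^1,\ldots,X^n)=\mu^{\otimes n}$ and $\mathcal{L}(Y^1,\ldots,Y^n)=(\mu')^{\otimes n}$. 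Plugging these into the representation \eqref{eq. v_e,n,m=int bar of v_e,n,m} and using the display above,
\[
\big|v_{\varepsilon,n,m}(t,\mu)-v_{\varepsilon,n,m}(t,\mu')\big|\le \frac{C_4}{n}\sum_{i=1}^n\mathbb{E}|X^i-Y^i|=C_4\,\mathcal{W}_1(\mu,\mu'),
\]
which is the first claimed bound, valid for all $\mu,\mu'\in\mathcal{P}_2(\mathbb{R}^d)$.

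\textbf{Passing to $v$.} Suppose first $\mu,\mu'\in\mathcal{P}_q(\mathbb{R}^d)$ for some $q>2$. By Lemma~\ref{lem. conv. of v_e,n,m to v_e} we may let $m\to\infty$ and then $n\to\infty$ in the bound just proved to obtain $|v_\varepsilon(t,\mu)-v_\varepsilon(t,\mu')|\le C_4\,\mathcal{W}_1(\mu,\mu')$; since $v_0=v$, Lemma~\ref{lem. |v_e-v_0|<C_5 e} gives $|v_\varepsilon(t,\cdot)-v(t,\cdot)|\le C_5\varepsilon$ pointwise, and letting $\varepsilon\downarrow 0$ yields $|v(t,\mu)-v(t,\mu')|\le C_4\,\mathcal{W}_1(\mu,\mu')$ for all $\mu,\mu'\in\mathcal{P}_q(\mathbb{R}^d)$, $q>2$. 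For general $\mu,\mu'\in\mathcal{P}_2(\mathbb{R}^d)$, approximate them in $\mathcal{W}_2$ by compactly supported measures $\mu_k,\mu_k'$ (e.g.\ by truncating representative random variables), which lie in $\mathcal{P}_q(\mathbb{R}^d)$ for every $q$; applying the previous bound to $(\mu_k,\mu_k')$ and letting $k\to\infty$, the left side tends to $|v(t,\mu)-v(t,\mu')|$ by the $\mathcal{W}_2$-continuity of $v$ from Proposition~\ref{property_V}, while $\mathcal{W}_1(\mu_k,\mu_k')\to\mathcal{W}_1(\mu,\mu')$ since $\mathcal{W}_1\le\mathcal{W}_2$ and by the triangle inequality. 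This gives the second claimed bound.

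\textbf{Main difficulty.} The computation is routine once the gradient estimate from Lemma~\ref{lem. classical sol. of smooth approx.} is in hand (the crucial feature being the $1/n$ scaling, which makes the sum over particles independent of $n$). The only point that needs care is that the convergence in Lemma~\ref{lem. conv. of v_e,n,m to v_e} is available only on $\mathcal{P}_q(\mathbb{R}^d)$ with $q>2$, so the Lipschitz estimate for $v$ must be obtained there first and then propagated to all of $\mathcal{P}_2(\mathbb{R}^d)$ using density and the $\mathcal{W}_2$-continuity of $v$.
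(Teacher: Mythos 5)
Your proof is correct and follows essentially the same approach as the paper: both establish the finite-dimensional bound via the gradient estimate from Lemma~\ref{lem. classical sol. of smooth approx.} combined with an i.i.d.\ coupling, and both pass to $v$ by the limit chain $m\to\infty$, $n\to\infty$, $\varepsilon\downarrow 0$ (Lemmas~\ref{lem. conv. of v_e,n,m to v_e} and~\ref{lem. |v_e-v_0|<C_5 e}) together with a truncation/density argument on $\mathcal{P}_q$ with $q>2$ and the $\mathcal{W}_2$-continuity of $v$ from Proposition~\ref{property_V}. The only cosmetic difference is that you separate the $\mathcal{P}_q$ case from the density step, whereas the paper folds the truncation into a single chain of iterated limits.
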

        \begin{proof}
            Let $\mu$, $\mu' \in \mathcal{P}_2(\mathbb{R}^d)$. Let $(\xi^1,\eta^1),\ldots,(\xi^n,\eta^n)$ be a sequence of independent and identically distributed random variables, with $\mathcal{L}(\xi^i) = \mu$, $\mathcal{L}(\eta^i) = \mu'$, for $i = 1,\ldots, n$. Theorem \ref{thm v_e,n,m} deduces that $v_{\varepsilon,n,m}(t,\mu) = \mathbb{E}\bar{v}_{\varepsilon,n,m}(t,\xi^1,\ldots,\xi^n)$ and $v_{\varepsilon,n,m}(t,\mu') = \mathbb{E}\bar{v}_{\varepsilon,n,m}(t,\eta^1,\ldots,\eta^n)$. Lemma \ref{lem. classical sol. of smooth approx.} implies that
            \begin{align*}
                |v_{\varepsilon,n,m}(t,\mu)-v_{\varepsilon,n,m}(t,\mu')|
                \leq \mathbb{E}|\bar{v}_{\varepsilon,n,m}(t,\xi^1,\ldots,\xi^n) - \bar{v}_{\varepsilon,n,m}(t,\eta^1,\ldots,\eta^n)|
                \leq &\frac{C_4}{n}\sum_{i=1}^n\mathbb{E}|\xi^i - \eta^i|\\
                =&C_4\mathbb{E}|\xi^1 - \eta^1|.
            \end{align*}
            Since $\xi^1$ and $\eta^1$ are arbitrary random variables such that $\mathcal{L}(\xi^1) = \mu$ and $\mathcal{L}(\eta^1) = \mu'$, we conclude that 
            \begin{align*}
                |v_{\varepsilon,n,m}(t,\mu)-v_{\varepsilon,n,m}(t,\mu')|\leq C_4\mathcal{W}_1(\mu,\mu').
            \end{align*}
            Now we consider $v$. Let $\mathcal{L}(\xi) = \mu$ and $\mathcal{L}(\eta) = \mu'$. By applying Proposition \ref{property_V}, together with Lemmas \ref{lem. |v_e,n,m-v_0,n,m| < C_6e} and \ref{lem. conv. of v_e,n,m to v_e}, we have
        \begin{align*}
                |v(t,\mu) - v(t,\mu')| =& |V(t,\xi) - V(t,\eta)|\\
                =& \lim_{k\to\infty} |V(t,\xi \mathds{1}_{\{|\xi|\leq k\}}) - V(t,\eta \mathds{1}_{\{|\eta|\leq k\}})|\\
                = &\lim_{k\to\infty} \lim_{\varepsilon\to0}\lim_{n\to\infty}\lim_{m\to\infty}|v_{\varepsilon,n,m}(t,\mathcal{L}(\xi \mathds{1}_{\{|\xi|\leq k\}})) - v_{\varepsilon,n,m}(t,\mathcal{L}(\eta \mathds{1}_{\{|\eta|\leq k\}}))|\\
                \leq &\,C_4\lim_{k\to\infty} \mathbb{E}|\xi \mathds{1}_{\{|\xi|\leq k\}} - \eta \mathds{1}_{\{|\eta|\leq k\}}|\\
                =&\,C_4 \mathbb{E}|\xi - \eta|.
            \end{align*}
            As $\xi$, $\eta$ are arbitrary, provided that $\mathcal{L}(\xi) = \mu$ and  $\mathcal{L}(\eta) = \mu'$, we conclude that $|v(t,\mu) - v(t,\mu')|\leq C_4\mathcal{W}_1(\mu,\mu')$.
        \end{proof}
\subsection{Existence}
We verify that the value function $v$ is a viscosity solution of \eqref{HJB_intro} by applying the It\^o's formula in Theorem \ref{standard_ito} and the dynamic programming in Theorem \ref{dpp_thm}. Similarly, the value function $v$ can be shown to be a viscosity solution under the standard Crandall-Lions definition by following the approach outlined in the theorem below. However, for consistency with the uniqueness theorem, we provide the proof under Definition \ref{def. of vis sol}.
\begin{theorem}
\label{existence_label}
	Suppose that Assumption \ref{assume:A} holds. The value function $v$ defined in \eqref{value_function_after_law_invariance} is a viscosity solution of \eqref{HJB_intro}.
		\label{thm. existence of vis sol}
\end{theorem}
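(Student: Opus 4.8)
The plan is to derive both halves of Definition~\ref{def. of vis sol} from the dynamic programming principle (Theorem~\ref{dpp_thm}) together with the It\^o formula of Theorem~\ref{standard_ito}. The candidate $v$ is bounded and $\mathcal W_1$-continuous by Proposition~\ref{property_V} and Lemma~\ref{v_W1_continuous}, and its extension $v(t,\nu):=v(t,\mu)$ (for $\mu$ the $\mathbb R^d$-marginal of $\nu$) inherits these properties, so $v$ is admissible. The terminal conditions (1a) and (2a) hold with equality: for $\mathcal L(\xi)=\mu$ the datum $\xi$ is independent of $W^0$, hence $\mathbb P^{W^0}_{X^{T,\xi,\alpha}_T}=\mu$ and $v(T,\mu)=\mathbb E[g(\xi,\mu)]=\int_{\mathbb R^d}g(x,\mu)\,\mu(dx)$.

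For the subsolution property (1b), fix $(t_0,\mu_0)\in[0,T)\times\mathcal P_2(\mathbb R^d)$ and $\varphi\in C^{1,2}([0,T]\times\mathcal P_2(\mathbb R^d))$ with $v-\varphi$ attaining its global maximum $0$ at $(t_0,\mu_0)$, so $v\le\varphi$ everywhere and $v(t_0,\mu_0)=\varphi(t_0,\mu_0)$. Picking $\xi\in L^2(\Omega^1,\mathcal F^1_{t_0},\mathbb P^1;\mathbb R^d)$ with $\mathcal L(\xi)=\mu_0$, one has $\mathbb P^{W^0}_{X^{t_0,\xi,\alpha}_{t_0}}=\mu_0$ for every control. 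For small $h>0$, restricting the inner infimum in the second identity of Theorem~\ref{dpp_thm} to $s=t_0+h$ yields an $h^2$-optimal control $\alpha^h\in\mathcal A_{t_0}$ with, writing $X^h:=X^{t_0,\xi,\alpha^h}$,
\[
v(t_0,\mu_0)\ \le\ \mathbb E\!\left[\int_{t_0}^{t_0+h}f\big(r,X^h_r,\mathbb P^{W^0}_{X^h_r},\alpha^h_r\big)\,dr+v\big(t_0+h,\mathbb P^{W^0}_{X^h_{t_0+h}}\big)\right]+h^2.
\]
I would then replace $v(t_0+h,\cdot)$ by $\varphi(t_0+h,\cdot)$ (licit since $v\le\varphi$), use $v(t_0,\mu_0)=\varphi(t_0,\mu_0)$, expand $\varphi(t_0+h,\mathbb P^{W^0}_{X^h_{t_0+h}})-\varphi(t_0,\mu_0)$ by Theorem~\ref{standard_ito}, and take the full expectation $\mathbb E$, which kills the $dW^0$-stochastic integral. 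Bounding the drift and second-order bracket appearing under the conditional expectation by its pointwise supremum over $a\in A$, and using $\mathbb E^1[\psi(X^h_r)]=\int\psi\,d\mathbb P^{W^0}_{X^h_r}$ together with the conditional independence of the decoupled copy, the surviving integrand is $\le F[\varphi](r,\mathbb P^{W^0}_{X^h_r})$, where $F[\varphi](r,\lambda)$ denotes the left-hand side of the first line of \eqref{HJB_intro} at $(r,\lambda)$. Dividing by $h$, letting $h\downarrow 0$, and invoking Proposition~\ref{prop. property of X} (whose moment and increment estimates are uniform in the control, so $\mathbb P^{W^0}_{X^h_r}\to\mu_0$) with the joint continuity and the $M_2$-growth of $F[\varphi]$, one obtains $F[\varphi](t_0,\mu_0)\ge 0$, which is (1b).

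For the supersolution property (2b), the point is to use the enlarged test-function space $\mathcal P_2(\mathbb R^d\times A)$ together with a \emph{constant-in-time} control. Fix $s\in(0,T]$, $(t,\nu)\in[0,s)\times\mathcal P_2(\mathbb R^d\times A)$ and $\varphi\in C^{1,2}([0,s]\times\mathcal P_2(\mathbb R^d\times A))$ with $v-\varphi$ attaining its global minimum $0$ at $(t,\nu)$, so $v\ge\varphi$ and $v(t,\nu)=\varphi(t,\nu)$. Since $\mathcal G$ supports all laws, choose a $\mathcal G$-measurable pair $(\xi,\zeta)$, $\xi\in\mathbb R^d$, $\zeta\in A$, with $\mathcal L(\xi,\zeta)=\nu$; then $\alpha\equiv\zeta\in\mathcal A_t$, and with $X:=X^{t,\xi,\alpha}$ the flow $\nu_r:=\mathbb P^{W^0}_{(X_r,\zeta)}\in\mathcal P_2(\mathbb R^d\times A)$ satisfies $\nu_t=\nu$ and has $\mathbb R^d$-marginal $\mathbb P^{W^0}_{X_r}$. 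Since the first identity of Theorem~\ref{dpp_thm} dominates the value of this control at $s=t+h$, and using $v(t+h,\nu_{t+h})=v(t+h,\mathbb P^{W^0}_{X_{t+h}})\ge\varphi(t+h,\nu_{t+h})$ and $v(t,\nu)=\varphi(t,\nu)$, one gets $0\ge\mathbb E[\varphi(t+h,\nu_{t+h})-\varphi(t,\nu)]+\mathbb E\int_t^{t+h}f(r,X_r,\mathbb P^{W^0}_{X_r},\zeta)\,dr$. Applying the $\mathbb R^{2d}$-version of Theorem~\ref{standard_ito} to $r\mapsto\varphi(r,\nu_r)$ --- the last $d$ coordinates of $(X_r,\zeta)$ are constant, so the corresponding drift and diffusion blocks vanish and the projections $\pi_d,\pi_{d\times d}$ of Definition~\ref{def. of vis sol} extract exactly $\partial_\mu\varphi$, $\nabla_x\partial_\mu\varphi$, $\partial_\mu^2\varphi$ --- and taking $\mathbb E$, the right-hand side becomes $\mathbb E\int_t^{t+h}G[\varphi](r,\nu_r)\,dr$ with $G[\varphi]$ the left-hand side of \eqref{HJB_supersolution}. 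Dividing by $h$ and letting $h\downarrow 0$ --- here the control is \emph{fixed}, so $\nu_r\to\nu$ in $\mathcal W_2$ by Proposition~\ref{prop. property of X} and dominated convergence applies directly --- gives $G[\varphi](t,\nu)\le 0$, i.e.\ \eqref{HJB_supersolution}.

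The main obstacle is the $h\downarrow 0$ passage in the subsolution step, where the $h^2$-optimal control $\alpha^h$, hence the flow $r\mapsto\mathbb P^{W^0}_{X^h_r}$, varies with $h$: one must control $\mathbb E\int_{t_0}^{t_0+h}\big|F[\varphi](r,\mathbb P^{W^0}_{X^h_r})-F[\varphi](t_0,\mu_0)\big|\,dr$ uniformly in $\alpha^h$, using $|F[\varphi](r,\lambda)|\le C(1+M_2(\lambda))$, the joint continuity of $F[\varphi]$, the control-uniform estimates of Proposition~\ref{prop. property of X}, and uniform integrability of $\{\sup_{r\in[t_0,t_0+h]}M_2(\mathbb P^{W^0}_{X^h_r})\}_h$. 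A related point is that under Assumption~\ref{assume:A} alone the integrability hypothesis of Theorem~\ref{standard_ito} ($\mathbb E\int(|\widetilde b|^2+|\widetilde\sigma|^4+|\widetilde\sigma^0|^4)\,dr<\infty$) is automatic only when $\rho\le\tfrac12$; for $\rho\in(\tfrac12,1)$ one first argues at points of $\mathcal P_q(\mathbb R^d)$ with $q$ large --- where Theorem~\ref{standard_ito} applies and all expectations converge --- and then reaches general $\mu_0\in\mathcal P_2(\mathbb R^d)$ after a localization of $X^h$ by an exit time, combined with the stopping-time form of the dynamic programming principle. The supersolution step uses a single fixed control and is comparatively routine, and the identification of the integrands produced by It\^o's formula with the operators in \eqref{HJB_intro} and \eqref{HJB_supersolution} is bookkeeping relying on $\mathbb E^1[\psi(X_r)]=\int\psi\,d\mathbb P^{W^0}_{X_r}$ and the conditional independence of the decoupled copy.
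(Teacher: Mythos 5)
Your proof follows the paper's route in its essentials: both directions combine the dynamic programming principle of Theorem~\ref{dpp_thm} with the It\^o formula of Theorem~\ref{standard_ito}, and the supersolution part uses the same device of a control constant on $[t_0,T]$ together with the $\mathcal P_2(\mathbb R^d\times A)$-lift and the projections $\pi_d,\pi_{d\times d}$. The one genuinely different choice is in the subsolution step. You take an $h^2$-optimal control $\alpha^h$ that depends on the horizon $h$, so the measure flow $r\mapsto\mathbb P^{W^0}_{X^{h}_r}$ moves with $h$, and the limit $h\downarrow 0$ then requires a modulus of continuity for the Hamiltonian that is uniform over controls --- precisely the obstacle you flag. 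The paper instead fixes a single $\varepsilon$-optimal control $\alpha^\varepsilon$ that works for \emph{all} $h$ simultaneously (take $\alpha^\varepsilon$ to be $\varepsilon$-optimal for the full problem on $[t_0,T]$ and invoke the sub-optimality direction of the DPP), lets $h\downarrow 0$ along that frozen flow --- a pointwise-continuity-plus-dominated-convergence argument organized through the three error terms I, II, III --- and only afterwards sends $\varepsilon\downarrow 0$. That ordering of quantifiers avoids the uniformity-in-control issue entirely; your version is still workable because the bounds in Proposition~\ref{prop. property of X} are uniform in the control, but it is the slightly harder path. Your remark on the fourth-moment hypothesis of Theorem~\ref{standard_ito} when $\rho\in(\tfrac12,1)$ is a fair observation that the paper does not spell out; the exit-time localization (or, equivalently, arguing first on $\mathcal P_q(\mathbb R^d)$ for $q$ large and then approximating in $\mathcal W_2$) that you sketch is the standard way to close it.
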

\begin{proof}
 From Lemma \ref{v_W1_continuous}, the value function $v$ is $([0,T],|\cdot|) \times (\mathcal{P}_{2}(\mathbb{R}^d),\mathcal{W}_1)$-continuous, and its boundedness follows from the boundedness of $f$ and $g$. Now we show separately that it is a viscosity supersolution and a viscosity subsolution of \eqref{HJB_intro}, according to Definition \ref{vis_def}.\\
 \hfill\\
\noindent\textbf{Part 1. $v$ is a viscosity supersolution:} For any $s_0\in (0,T]$ and $\varphi \in C^{1,2}([0,s_0]\times\mathcal{P}_2(\mathbb{R}^d\times A))$, we assume that $v-\varphi$ attains a minimum at $(t_0,\nu_0)\in [0,s_0)\times\mathcal{P}_2(\mathbb{R}^d\times A)$ with a value of $0$, where $v(t_0,\nu_0)$ is defined as $v(t_0,\mu_0)$ with $\mu_0$ being the marginal of $\nu_0$ on $\mathbb{R}^d$. Recalling  that $\mathcal{M}_{t}$ is the set of $\mathcal{F}_{t}^{t}$-measurable random variables, we let $\alpha' \in \mathcal{M}_{t_0}$ and  $\xi \in L^2(\Omega^1,\mathcal{F}_t^1,\mathbb{P}^1;\mathbb{R}^d)$ such that $\mathcal{L}(\xi,\alpha') = \nu_0$. Letting $a\in A$ be arbitrary, we define $\alpha_s := a 
  \mathds{1}_{[0,t_0)}(s)
  +  \alpha' \mathds{1}_{[t_0,T]}(s)$, which belongs to $\mathcal{A}_{t_0}$. Let $(X_s^{t_0,\xi,\alpha})_{s\in [t_0, T]}$ be the solution of the dynamic \eqref{dynamics} with initial time $t_0$, initial data $\xi$ and control $\alpha$ defined in the above. Then for $h>0$ small enough, we use the dynamic programming in Theorem \ref{dpp_thm} and It\^o's formula in Theorem \ref{standard_ito} to obtain that
		\begin{align*}
			0\geq\,&\frac{1}{h}\mathbb{E}\Big[(v-\varphi)(t_0,\nu_0) - (v-\varphi)(t_0+h,\mathbb{P}^{W^0}_{(X_{t_0+h}^{t_0,\xi,\alpha},\alpha_{t_0+h})})\Big]\\
			\geq\, &\frac{1}{h}\mathbb{E}\Bigg[\int_{t_0}^{t_0+h}f(s,X_s^{t_0,\xi,\alpha},\mathbb{P}^{W^0}_{X_s^{t_0,\xi,\alpha}},\alpha_s) + \partial_t\varphi(s,\mathbb{P}_{(X_s^{t_0,\xi,\alpha},\alpha_s)}^{W^0})\\
   &\h{30pt}+ \mathbb{E}^1\big[\partial_\mu \varphi(s,\mathbb{P}^{W^0}_{(X_s^{t_0,\xi,\alpha},\alpha_s)})(X_s^{t_0,\xi,\alpha},\alpha_s)\cdot b(s,X_s^{t_0,\xi,\alpha},\mathbb{P}^{W^0}_{X_s^{t_0,\xi,\alpha}},\alpha_s)\big]\\
			&\h{30pt}+\frac{1}{2}\mathbb{E}^1\Big\{\operatorname{tr}\pig[\nabla_x\partial_\mu \varphi(s,\mathbb{P}^{W^0}_{(X_s^{t_0,\xi,\alpha},\alpha_s)})(X_s^{t_0,\xi,\alpha},\alpha_s)\sigma(s,X_s^{t_0,\xi,\alpha},\alpha_s)\sigma^\top(s,X_s^{t_0,\xi,\alpha},\alpha_s)\pig]\Big\}\\
&\h{30pt} +\frac{1}{2}\mathbb{E}^1\Big\{\operatorname{tr}\pig[\nabla_x\partial_\mu \varphi(s,\mathbb{P}^{W^0}_{(X_s^{t_0,\xi,\alpha},\alpha_s)})(X_s^{t_0,\xi,\alpha},\alpha_s)\sigma^0(s,X_s^{t_0,\xi,\alpha})\sigma^{0;\top}(s,X_s^{t_0,\xi,\alpha})\pig]\Big\}\\
&\h{30pt} +\frac{1}{2}\mathbb{E}^1\widecheck{\mathbb{E}}^1\Big\{\operatorname{tr}\Big[\partial_\mu^2 \varphi(s,\mathbb{P}^{W^0}_{(X_s^{t_0,\xi,\alpha},\alpha_s)})(X_s^{t_0,\xi,\alpha},\alpha_s,\widecheck{X}_s^{t_0,\xi,\alpha},\widecheck{\alpha}_s)\sigma^0(s,X_s^{t_0,\xi,\alpha})\sigma
^{0;\top}(s,\widecheck{X}_s^{t_0,\xi,\alpha})\Big]\Big\}ds\Bigg],
\end{align*}
where the operators $\partial_\mu$, $\partial_\mu^2$ are defined as in Definition \ref{vis_def}, through the projection. The process $(\widecheck{X}_s^{t_0,\xi,\alpha},\widecheck{\alpha}_s)$ is an independent copy of $(X_s^{t_0,\xi,\alpha},\alpha_s)$. For any $\varepsilon >0$, it holds that
\begin{equation}
\label{continuity_measure_flow}
\begin{split}
\mathbb{P}^0\left(\sup_{t\in[s,s+h]}\mathcal{W}_2(\mathbb{P}^{W^0}_{X_{t}^{t_0,\xi,\alpha}},\mathbb{P}^{W^0}_{X_{s}^{t_0,\xi,\alpha}})> \varepsilon\right)
\leq \,&\frac{\mathbb{E}^0\displaystyle\sup_{t\in[s,s+h]}\pig[\mathcal{W}_2(\mathbb{P}^{W^0}_{X_{t}^{t_0,\xi,\alpha}},\mathbb{P}^{W^0}_{X_{s}^{t_0,\xi,\alpha}})\pigr]^2}{\varepsilon^2}\\
\leq\,&\frac{\mathbb{E}\displaystyle\sup_{t\in[s,s+h]}|X_{t}^{t_0,\xi,\alpha}-X_{s}^{t_0,\xi,\alpha}|^2}{\varepsilon^2}\\
\leq\,&\frac{Ch}{\varepsilon^2},
\end{split}
\end{equation}
where in the last inequality we have used Proposition \ref{prop. property of X}. The above term goes to $0$ as $h\to 0^+$, thus the flow of measure  $s\mapsto \mathbb{P}_{X_s^{t_0,\xi,\alpha}}^{W^0}$ is continuous on $[t_0,s_0]$ for $\mathbb{P}$-a.s. $\omega^0 \in \Omega^0$, so is $s\mapsto \mathbb{P}_{(X_s^{t_0,\xi,\alpha},\alpha_s)}^{W^0}$ for $\mathbb{P}$-a.s. $\omega^0 \in \Omega^0$. From the regularity of the coefficients in Assumption \ref{assume:A}, the regularity of the test function $\varphi$, the continuity of the processes $s\mapsto X_s^{t_0,\xi,\alpha}$, $s\mapsto \mathbb{P}_{(X_s^{t_0,\xi,\alpha},\alpha_s)}^{W^0}$ for $\mathbb{P}$-a.s. $\omega^0 \in \Omega^0$ and the dominated convergence theorem, we have as $h\to 0^+$,
\begin{equation}
\label{supersolution}
\begin{split}
&\partial_t\varphi(t_0,\nu_0)+\mathbb{E}\widecheck{\mathbb{E}}\bigg\{f(t_0,\xi,\mu_0,\alpha') + \big[\partial_\mu \varphi(t_0,\nu_0)(\xi,\alpha')\cdot b(t_0,\xi,\mu_0,\alpha')\big]\\
			&+\frac{1}{2}\operatorname{tr}\Big[\nabla_x\partial_\mu \varphi(t_0,\nu_0)(\xi,\alpha')\sigma(t_0,\xi,\alpha')[\sigma(t_0,\xi,\alpha')]^\top\Big]+\frac{1}{2}\operatorname{tr}\Big[\nabla_x\partial_\mu \varphi(t_0,\nu_0)(\xi,\alpha')\sigma^0(t_0,\xi)[\sigma^0(t_0,\xi)]^\top\Big]\\
   &+\frac{1}{2}\operatorname{tr} \pig[\partial_\mu^2 \varphi(t_0,\nu_0)(\xi,\alpha',\widecheck{\xi},\widecheck{\alpha'})
   \sigma^0(t_0,\xi)[\sigma^0(t_0,\widecheck{\xi})]^\top\pig]\bigg\}\leq 0.
\end{split}		
  \end{equation}
\textbf{Part 2. $v$ is a viscosity subsolution:} For any $\varphi \in {C}^{1,2}([0,T]\times \mathcal{P}_2(\mathbb{R}^d))$, we assume that $v-\varphi$ attains a maximum at $(t_0,\mu_0)\in [0,T)\times\mathcal{P}_2(\mathbb{R}^d)$ with a value of $0$. Let $\varepsilon>0$ and $\xi \in L^2(\Omega^1,\mathcal{F}^1,\mathbb{P}^1;\mathbb{R}^d)$ satisfying $\mathcal{L}(\xi) = \mu_0$. There is an $\alpha^{\varepsilon} \in \mathcal{A}_t$ such that for any $h\in(0,T-t_0]$, we have
\begin{align*}
			v(t_0,\mu_0)-\varepsilon
            \leq\, &\mathbb{E}\Bigg[\int_{t_0}^{t_0+h} f(r,X_r^{t_0,\xi,\alpha^{\varepsilon}},\mathbb{P}^{W^0}_{X_r^{t_0,\xi,\alpha^{\varepsilon}}},\alpha_r^{\varepsilon})dr+v(t_0+h,\mathbb{P}^{W^0}_{X^{t_0,\xi,\alpha^{\varepsilon}}_{t_0+h}})\Bigg],
		\end{align*}
		where $(X_s^{t_0,\xi,\alpha^{\varepsilon}})_{s\in[t_0,T]}$ solves the dynamic \eqref{dynamics} with initial time $t_0$, initial data $\xi$ and control $\alpha^\varepsilon$. Then Theorem \ref{standard_ito} implies
		\begin{align}
			0\leq\,& \frac{1}{h}\mathbb{E}\Big[(v-\varphi)(t_0,\mu_0) - (v-\varphi)(t_0+h,\mathbb{P}^{W^0}_{X_{t_0+h}^{t_0,\xi,\alpha^{\varepsilon}}})\Big]\nonumber\\
			<\, &\frac{1}{h}\mathbb{E}\Bigg\{\int_{t_0}^{t_0+h}f(s,X_s^{t_0,\xi,\alpha^{\varepsilon}},\mathbb{P}^{W^0}_{X_s^{t_0,\xi,\alpha^{\varepsilon}}},\alpha_s^{\varepsilon}) + \partial_t\varphi(s,\mathbb{P}_{X_s^{t_0,\xi,\alpha^{\varepsilon}}}^{W^0})\nonumber\\
   &\h{30pt}+ \mathbb{E}^1\Big[\partial_\mu \varphi(s,\mathbb{P}^{W^0}_{X_s^{t_0,\xi,\alpha^{\varepsilon}}})(X_s^{t_0,\xi,\alpha^{\varepsilon}})\cdot b(s,X_s^{t_0,\xi,\alpha^{\varepsilon}},\mathbb{P}^{W^0}_{X_s^{t_0,\xi,\alpha^{\varepsilon}}},\alpha_s^{\varepsilon})\Big]\nonumber\\
			&\h{30pt}+ \frac{1}{2}\mathbb{E}^1\Big\{\operatorname{tr}\pig[\nabla_x\partial_\mu \varphi(s,\mathbb{P}^{W^0}_{X_s^{t_0,\xi,\alpha^{\varepsilon}}})(X_s^{t_0,\xi,\alpha^{\varepsilon}})\sigma(s,X_s^{t_0,\xi,\alpha^{\varepsilon}},\alpha_s^{\varepsilon})\sigma^\top(s,X_s^{t_0,\xi,\alpha^{\varepsilon}},\alpha_s^{\varepsilon})\pig]\Big\}\nonumber\\
   &\h{30pt}+ \frac{1}{2}\mathbb{E}^1\Big\{\operatorname{tr}\pig[\nabla_x\partial_\mu \varphi(s,\mathbb{P}^{W^0}_{X_s^{t_0,\xi,\alpha^{\varepsilon}}})(X_s^{t_0,\xi,\alpha^{\varepsilon}})\sigma^0(s,X_s^{t_0,\xi,\alpha^{\varepsilon}})\sigma^{0;\top}(s,X_s^{t_0,\xi,\alpha^{\varepsilon}})\pig]\Big\}\nonumber\\
   &\h{30pt}+ \frac{1}{2}\mathbb{E}^1\widecheck{\mathbb{E}}^1\Big\{\operatorname{tr}\Big[\partial_\mu^2 \varphi(s,\mathbb{P}^{W^0}_{X_s^{t_0,\xi,\alpha^\varepsilon}})(X_s^{t_0,\xi,\alpha^\varepsilon},\widecheck{X}_s^{t_0,\xi,\alpha^\varepsilon})\sigma^0(s,X_s^{t_0,\xi,\alpha^\varepsilon})\sigma
^{0;\top}(s,\widecheck{X}_s^{t_0,\xi,\alpha^\varepsilon})\Big]\Big\}ds\Bigg\}+\varepsilon\nonumber\\
			\leq &\frac{1}{h}\mathbb{E}\Bigg\{\int_{t_0}^{t_0+h}\sup_{a\in A}\bigg(f(s,X_s^{t_0,\xi,\alpha^\varepsilon},\mathbb{P}^{W^0}_{X_s^{t_0,\xi,\alpha^\varepsilon}},a)   
   + \partial_t\varphi(s,\mathbb{P}_{X_s^{t_0,\xi,\alpha^{\varepsilon}}}^{W^0}) \nonumber\\
   &\h{80pt}+ \mathbb{E}^1\big[\partial_\mu \varphi(s,\mathbb{P}^{W^0}_{X_s^{t_0,\xi,\alpha^\varepsilon}})(X_s^{t_0,\xi,\alpha^\varepsilon})\cdot b(s,X_s^{t_0,\xi,\alpha^\varepsilon},\mathbb{P}^{W^0}_{X_s^{t_0,\xi,\alpha^\varepsilon}},a)\big]\nonumber\\
			&\h{80pt}+ \frac{1}{2}\mathbb{E}^1\pig\{\operatorname{tr}\pig[\nabla_x\partial_\mu \varphi(s,\mathbb{P}^{W^0}_{X_s^{t_0,\xi,\alpha^\varepsilon}})(X_s^{t_0,\xi,\alpha^\varepsilon})\sigma(s,X_s^{t_0,\xi,\alpha^\varepsilon},a)\sigma^\top(s,X_s^{t_0,\xi,\alpha^\varepsilon},a)\pig]\Big\}\Bigg)\nonumber\\
   &\hspace{30pt}+ \frac{1}{2}\mathbb{E}^1\Big\{\operatorname{tr}\pig[\nabla_x\partial_\mu \varphi(s,\mathbb{P}^{W^0}_{X_s^{t_0,\xi,\alpha^{\varepsilon}}})(X_s^{t_0,\xi,\alpha^{\varepsilon}})\sigma^0(s,X_s^{t_0,\xi,\alpha^{\varepsilon}})\sigma^{0;\top}(s,X_s^{t_0,\xi,\alpha^{\varepsilon}})\pig]\Big\}\nonumber\\
   &\h{30pt}+ \frac{1}{2}\mathbb{E}^1\widecheck{\mathbb{E}}^1\Big\{\operatorname{tr}\Big[\partial_\mu^2 \varphi(s,\mathbb{P}^{W^0}_{X_s^{t_0,\xi,\alpha^\varepsilon}})(X_s^{t_0,\xi,\alpha^\varepsilon},\widecheck{X}_s^{t_0,\xi,\alpha^\varepsilon})\sigma^0(s,X_s^{t_0,\xi,\alpha^\varepsilon})\sigma
^{0;\top}(s,\widecheck{X}_s^{t_0,\xi,\alpha^\varepsilon})\Big]\Big\}\Bigg\}ds
+\varepsilon.
\label{759}
   \end{align}
  Arguing as in \eqref{supersolution},  we have
   \begin{align}
       &\frac{1}{h}\mathbb{E}\int_{t_0}^{t_0+h} \Bigg\{\partial_t \varphi(s,\mathbb{P}_{X_s^{t_0,\xi,\alpha^{\varepsilon}}}^{W^0})+ \frac{1}{2}\mathbb{E}^1\Big\{\operatorname{tr}\pig[\nabla_x\partial_\mu \varphi(s,\mathbb{P}^{W^0}_{X_s^{t_0,\xi,\alpha^{\varepsilon}}})(X_s^{t_0,\xi,\alpha^{\varepsilon}})\sigma^0(s,X_s^{t_0,\xi,\alpha^{\varepsilon}})\sigma^{0;\top}(s,X_s^{t_0,\xi,\alpha^{\varepsilon}})\pig]\Big\}\nonumber\\
   &\h{60pt}+ \frac{1}{2}\mathbb{E}^1\widecheck{\mathbb{E}}^1\Big\{\operatorname{tr}\Big[\partial_\mu^2 \varphi(s,\mathbb{P}^{W^0}_{X_s^{t_0,\xi,\alpha^\varepsilon}})(X_s^{t_0,\xi,\alpha^\varepsilon},\widecheck{X}_s^{t_0,\xi,\alpha^\varepsilon})\sigma^0(s,X_s^{t_0,\xi,\alpha^\varepsilon})\sigma
^{0;\top}(s,\widecheck{X}_s^{t_0,\xi,\alpha^\varepsilon})\Big]\Big\}\Bigg\}ds\nonumber\\
       &\longrightarrow \partial_t\varphi(t_0,\mu_0)+\frac{1}{2}\mathbb{E}\widecheck{\mathbb{E}}\Bigg\{\operatorname{tr}\Big[\nabla_x\partial_\mu \varphi(t_0,\mu_0)(\xi)\sigma^0(t_0,\xi)[\sigma^0(t_0,\xi)]^\top\Big]+\operatorname{tr} \pig[\partial_\mu^2 \varphi(t_0,\mu_0)(\xi,\widecheck{\xi})
   \sigma^0(t_0,\xi)[\sigma^0(t_0,\widecheck{\xi})]^\top\pig]\Bigg\},
   \label{767}
   \end{align}
as $h\to 0^+$. We define the following terms:
  \begin{align*}
      \RN{1}:=&\frac{1}{h}\mathbb{E}\int_{t_0}^{t_0+h}\sup_{a\in A}\Big\{f(s,X_s^{t_0,\xi,\alpha^\varepsilon},\mathbb{P}^{W^0}_{X_s^{t_0,\xi,\alpha^\varepsilon}},a)-f(t_0,\xi,\mu_0,a)\Big\}ds;\\
      \RN{2}:=&\frac{1}{h}\mathbb{E}\int_{t_0}^{t_0+h}\sup_{a\in A}\Big\{\mathbb{E}^1\pig[\partial_\mu \varphi(s,\mathbb{P}^{W^0}_{X_s^{t_0,\xi,\alpha^\varepsilon}})(X_s^{t_0,\xi,\alpha^\varepsilon})\cdot b(s,X_s^{t_0,\xi,\alpha^\varepsilon},\mathbb{P}^{W^0}_{X_s^{t_0,\xi,\alpha^\varepsilon}},a)\\
      &\h{85pt}-\partial_\mu \varphi(t_0,\mu_0)(\xi)\cdot b(t_0,\xi,\mu_0,a)\pig]\Big\}ds;\\
      \RN{3}:=&\frac{1}{h}\mathbb{E}\int_{t_0}^{t_0+h}\sup_{a\in A}\bigg\{\frac{1}{2}\mathbb{E}^1\Big\{\operatorname{tr}\pig[\nabla_x\partial_\mu \varphi(s,\mathbb{P}^{W^0}_{X_s^{t_0,\xi,\alpha^\varepsilon}})(X_s^{t_0,\xi,\alpha^\varepsilon})\sigma(s,X_s^{t_0,\xi,\alpha^\varepsilon},a)\sigma^\top(s,X_s^{t_0,\xi,\alpha^\varepsilon},a)\pig]\Big\}\\
  &\h{85pt}- \frac{1}{2}\mathbb{E}^1\Big\{\operatorname{tr}\pig[\nabla_x\partial_\mu \varphi(t_0,\mu_0)(\xi)\sigma(t_0,\xi,a)\sigma^\top(t_0,\xi,a)\pig]\Big\}\bigg\}ds.
  \end{align*}
  Then the above gives 
  \begin{align}
  &\frac{1}{h}\mathbb{E}\Bigg\{\int_{t_0}^{t_0+h}\sup_{a\in A}\bigg(f(s,X_s^{t_0,\xi,\alpha^\varepsilon},\mathbb{P}^{W^0}_{X_s^{t_0,\xi,\alpha^\varepsilon}},a)
  +\mathbb{E}^1\Big[\partial_\mu \varphi(s,\mathbb{P}^{W^0}_{X_s^{t_0,\xi,\alpha^\varepsilon}})(X_s^{t_0,\xi,\alpha^\varepsilon})\cdot b(s,X_s^{t_0,\xi,\alpha^\varepsilon},\mathbb{P}^{W^0}_{X_s^{t_0,\xi,\alpha^\varepsilon}},a)\Big]\nonumber\\
			&\h{80pt}+ \frac{1}{2}\mathbb{E}^1\Big\{\operatorname{tr}\pig[\nabla_x\partial_\mu \varphi(s,\mathbb{P}^{W^0}_{X_s^{t_0,\xi,\alpha^\varepsilon}})(X_s^{t_0,\xi,\alpha^\varepsilon})\sigma(s,X_s^{t_0,\xi,\alpha^\varepsilon},a)\sigma^\top(s,X_s^{t_0,\xi,\alpha^\varepsilon},a)\pig]\Big\}\bigg)ds\Bigg\}\nonumber\\ 
  &\leq\RN{1}+\RN{2}+\RN{3}+\mathbb{E}\sup_{a\in A}\Big\{f(t_0,\xi,\mu_0,a)+\partial_\mu \varphi(t_0,\mu_0)(\xi)\cdot b(t_0,\xi,\mu_0,a)\nonumber\\
  &\h{110pt}+\frac{1}{2}\operatorname{tr}\pig[\nabla_x\partial_\mu \varphi(t_0,\mu_0)(\xi)\sigma(t_0,\xi,a)\sigma^\top(t_0,\xi,a)\pig]\Big\}.\label{782}
  \end{align}
  We now show that all these three terms $\RN{1}$, $\RN{2}$, $\RN{3} \to 0$ as $h\to 0^+$. We first investigate $\RN{1}$ by
  \begin{align*}
   \RN{1}=\,&\frac{1}{h}\mathbb{E}\int_{t_0}^{t_0+h}\sup_{a\in A}\Big\{f(s,X_s^{t_0,\xi,\alpha^\varepsilon},\mathbb{P}^{W^0}_{X_s^{t_0,\xi,\alpha^\varepsilon}},a)-f(t_0,\xi,\mu_0,a)\Big\}ds\\ 
  \leq\,& \frac{1}{h}\mathbb{E}\int_{t_0}^{t_0+h}K\Big\{|X_s^{t_0,\xi,\alpha^\varepsilon}-\xi|+\mathcal{W}_1(\mathbb{P}_{X_s^{t_0,\xi,\alpha^\varepsilon}}^{W^0},\mu_0)+|s-t_0|^{\beta}\Big\}ds\\
  &\h{-10pt}\longrightarrow  0,
\end{align*}
as $h\to 0^+$ by Proposition \ref{prop. property of X} and Assumption \ref{assume:A}. For $\RN{2}$, we have
\begin{align}
    \RN{2}=\,&\frac{1}{h}\mathbb{E}\int_{t_0}^{t_0+h}\sup_{a\in A}\Big\{\mathbb{E}^1\pig[\partial_\mu \varphi(s,\mathbb{P}^{W^0}_{X_s^{t_0,\xi,\alpha^\varepsilon}})(X_s^{t_0,\xi,\alpha^\varepsilon})\cdot b(s,X_s^{t_0,\xi,\alpha^\varepsilon},\mathbb{P}^{W^0}_{X_s^{t_0,\xi,\alpha^\varepsilon}},a)\nonumber\\
    &\h{70pt}-\partial_\mu \varphi(t_0,\mu_0)(\xi)\cdot b(t_0,\xi,\mu_0,a)\pig]\Big\}ds\nonumber\\
    \leq\, &\frac{1}{h}\mathbb{E}\int_{t_0}^{t_0+h} \sup_{a\in  A}\bigg\{\mathbb{E}^1\Big[\partial_\mu \varphi(s,\mathbb{P}^{W^0}_{X_s^{t_0,\xi,\alpha^\varepsilon}})(X_s^{t_0,\xi,\alpha^\varepsilon})\cdot b(s,X_s^{t_0,\xi,\alpha^\varepsilon},\mathbb{P}^{W^0}_{X_s^{t_0,\xi,\alpha^\varepsilon}},a)\Big]\nonumber\\
    &\,\h{70pt}-\mathbb{E}^1\Big[\partial_\mu \varphi(t_0,\mu_0)(\xi)\cdot b(s,X_s^{t_0,\xi,\alpha^\varepsilon},\mathbb{P}^{W^0}_{X_s^{t_0,\xi,\alpha^\varepsilon}},a)\Big]\bigg\}ds\nonumber\\
    &+\frac{1}{h}\mathbb{E}\int_{t_0}^{t_0+h} \sup_{a\in  A}\mathbb{E}^1\Big[\partial_\mu \varphi(t_0,\mu_0)(\xi)\cdot b(s,X_s^{t_0,\xi,\alpha^\varepsilon},\mathbb{P}^{W^0}_{X_s^{t_0,\xi,\alpha^\varepsilon}},a)-\partial_\mu \varphi(t_0,\mu_0)(\xi)\cdot b(t_0,\xi,\mu_0,a)\Big]ds\nonumber\\
    \leq\, &\frac{1}{h}\mathbb{E}\int_{t_0}^{t_0+h} \sup_{a\in A}\mathbb{E}^1\Big[\pig|\partial_\mu \varphi(s,\mathbb{P}^{W^0}_{X_s^{t_0,\xi,\alpha^\varepsilon}})(X_s^{t_0,\xi,\alpha^\varepsilon})-\partial_\mu \varphi(t_0,\mu_0)(\xi)\pig|\pig|b(s,X_s^{t_0,\xi,\alpha^\varepsilon},\mathbb{P}^{W^0}_{X_s^{t_0,\xi,\alpha^\varepsilon}},a)\pig|\Big]ds\nonumber\\
    &+\frac{K}{h}\mathbb{E}\int_{t_0}^{t_0+h} \mathbb{E}^1\Big[\pig|\partial_\mu \varphi(t_0,\mu_0)(\xi)\pig|\pig(|s-t_0|^\beta + |X_s^{t_0,\xi,\alpha^\varepsilon}-\xi|+\mathcal{W}_1(\mathbb{P}^{W^0}_{X_s^{t_0,\xi,\alpha^\varepsilon}},\mu_0)\pig)\Big]ds\nonumber\\
    \leq\,&\frac{K}{h}\mathbb{E}\int_{t_0}^{t_0+h} \mathbb{E}^1\Big[\pig|\partial_\mu \varphi(s,\mathbb{P}^{W^0}_{X_s^{t_0,\xi,\alpha^\varepsilon}})(X_s^{t_0,\xi,\alpha^\varepsilon})-\partial_\mu \varphi(t_0,\mu_0)(\xi)\pig|\Big(|1+|X_s^{t_0,\xi,\alpha^\varepsilon}|^\rho\Big)\Big]ds\label{1314}\\
    &+\frac{\sqrt{3}K}{h}\sqrt{\mathbb{E}\mathbb{E}^1|\partial_\mu \varphi(t_0,\mu_0)(\xi)|^2}\int_{t_0}^{t_0+h} \sqrt{\mathbb{E}\mathbb{E}^1\Big(|s-t_0|^{2\beta} + |X_s^{t_0,\xi,\alpha^\varepsilon}-\xi|^2+\pig[\mathcal{W}_1(\mathbb{P}^{W^0}_{X_s^{t_0,\xi,\alpha^\varepsilon}},\mu_0)\pigr]^2\Big)}ds.\nonumber
\end{align}
The convergence of the term in \eqref{1314} is due to the continuity as proved in \eqref{continuity_measure_flow}, Proposition \ref{prop. property of X}, the regularity of the test function $\varphi$ and the dominated convergence theorem. Thus, the term II converges to zero as $h \to 0^+$ with the aid of Proposition \ref{prop. property of X}. Similar estimate holds for the term $\RN{3}$, thus putting \eqref{767} and \eqref{782} into \eqref{759} and then passing $h\to 0^+$, we obtain that 
\begin{align*}
			0\leq \,&\partial_t\varphi(t_0,\mu_0)+\mathbb{E}\sup_{a\in A}\bigg\{f(t_0,\xi,\mu_0,a) + \big[\partial_\mu \varphi(t_0,\mu_0)(\xi)\cdot b(t_0,\xi,a,\mu_0)\big]\\
			&+\frac{1}{2}\operatorname{tr}\pig[\nabla_x\partial_\mu \varphi(t_0,\mu_0)(\xi)\sigma(t_0,\xi,a)\sigma^\top(t_0,\xi,a)\pig]\bigg\}+\mathbb{E}\widecheck{\mathbb{E}}\Bigg\{\frac{1}{2}\operatorname{tr}\Big[\nabla_x\partial_\mu \varphi(t_0,\mu_0)(\xi)\sigma^0(t_0,\xi)[\sigma^0(t_0,\xi)]^\top\Big]\\&+\frac{1}{2}\operatorname{tr} \pig[\partial_\mu^2 \varphi(t_0,\mu_0)(\xi,\widecheck{\xi})
   \sigma^0(t_0,\xi)[\sigma^0(t_0,\widecheck{\xi})]^\top\pig]\Bigg\}+\varepsilon. 
		\end{align*}
  As $\varepsilon>0$ is arbitrary, the claim is proved.
	\end{proof}
\subsection{Uniqueness}
This subsection demonstrates that any viscosity subsolution is less than or equal to any viscosity supersolution, leading to the uniqueness of viscosity solution which is the main result of this article.
\begin{theorem}\label{thm:main}
		Suppose that Assumptions \ref{assume:A}-\ref{assume:B} hold. Let  $u_1$, $u_2 : [0,T]\times \mathcal{P}_2(\mathbb{R}^d) \to \mathbb{R}$ be the viscosity subsolution and supersolution (in the sense of Definition \ref{def. of vis sol}) of equation \eqref{HJB_intro} respectively. Then it holds that $u_1 \leq u_2$ on $[0,T]\times \mathcal{P}_2(\mathbb{R}^d)$. Hence, the viscosity solution of equation \eqref{HJB_intro} is unique.
		\label{thm compar}
	\end{theorem}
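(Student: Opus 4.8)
The strategy is to establish the two one–sided bounds $u_1\le v$ and $v\le u_2$ separately, where $v$ is the value function \eqref{value_function_after_law_invariance}; since $v$ is itself a viscosity solution of \eqref{HJB_intro} by Theorem~\ref{thm. existence of vis sol}, this gives $u_1\le v\le u_2$ and hence uniqueness. I describe $u_1\le v$ in detail; the bound $v\le u_2$ follows from the analogous argument, the only structural change being that one invokes the modified supersolution inequality \eqref{HJB_supersolution} for $u_2$ with a test function extended to $[0,T]\times\mathcal{P}_2(\mathbb{R}^d\times A)$, the interchange of $\int_{\mathbb{R}^d\times A}(\cdot)\,\nu(dx,da)$ with $\int_{\mathbb{R}^d}\sup_{a}(\cdot)\,\mu(dx)$ playing the role of the Jensen step below. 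So assume, for contradiction, that $u_1(t_0,\mu_0)>v(t_0,\mu_0)$ at some $(t_0,\mu_0)$. Since $u_1,v$ are $\mathcal{W}_1$–continuous (Lemma~\ref{v_W1_continuous}) and compactly supported measures are $\mathcal{W}_1$–dense in $\mathcal{P}_2(\mathbb{R}^d)$, and since $u_1(T,\cdot)\le\int g\,d\mu=v(T,\cdot)$ rules out $t_0=T$, we may assume $t_0<T$ and $\mu_0\in\mathcal{P}_q(\mathbb{R}^d)$ for some $q>2$. Put $3\kappa:=u_1(t_0,\mu_0)-v(t_0,\mu_0)>0$ and $\lambda:=\kappa/(T-t_0)$, and consider, for $\delta>0$, $\varepsilon>0$, $n,m\in\mathbb{N}$, the penalized functional $\Psi_{\varepsilon,n,m,\delta}(t,\mu):=u_1(t,\mu)-v_{\varepsilon,n,m}(t,\mu)-\delta M_2(\mu)-\lambda(T-t)$, with $v_{\varepsilon,n,m}$ the smooth finite–dimensional approximation \eqref{def. v_e=V_e mfg}. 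By Lemmas~\ref{lem. conv. of v_e,n,m to v_e}, \ref{lem. |v_e,n,m-v_0,n,m| < C_6e} and \ref{lem. |v_e-v_0|<C_5 e} one has $v_{\varepsilon,n,m}(t_0,\mu_0)\to v(t_0,\mu_0)$ in a suitable joint limit; fixing $\delta$ small enough that $\delta M_2(\mu_0)<\kappa$ and then $(\varepsilon,n,m)$ so that $|v_{\varepsilon,n,m}(t_0,\mu_0)-v(t_0,\mu_0)|<\kappa/2$ yields $\Psi_{\varepsilon,n,m,\delta}(t_0,\mu_0)>\kappa/2>0$.

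Now comes the compactness step, which replaces the smooth variational principle of \cite{cosso_master_2022,cheung2023viscosity}. Since $u_1$ and $v_{\varepsilon,n,m}$ are bounded (the latter by $\ell_2$, Theorem~\ref{thm v_e,n,m}) while $\delta M_2(\mu)\to\infty$, there is $K=K(\delta)$, of order $1/\delta$ and independent of $(\varepsilon,n,m)$, with $\mu_0\in V_K^{1,2}$ and $\Psi_{\varepsilon,n,m,\delta}(t,\mu)<\kappa/2$ whenever $M_2(\mu)>K$; hence $\sup_{[0,T]\times\mathcal{P}_2(\mathbb{R}^d)}\Psi_{\varepsilon,n,m,\delta}=\sup_{[0,T]\times V_K^{1,2}}\Psi_{\varepsilon,n,m,\delta}$. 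By Lemma~\ref{lem. compact of V^p_K} the set $V_K^{1,2}$ is $\mathcal{W}_1$–compact, and $\Psi_{\varepsilon,n,m,\delta}$ is $\mathcal{W}_1$–upper semicontinuous on it ($u_1$ is $\mathcal{W}_1$–continuous, $v_{\varepsilon,n,m}$ is $\mathcal{W}_1$–Lipschitz by Lemma~\ref{v_W1_continuous}, and $-\delta M_2$ is $\mathcal{W}_1$–upper semicontinuous by Remark~\ref{V_lower_semi}), so it attains a maximum at some $(t^*,\mu^*)$ with $\mu^*\in V_K^{1,2}$. Because $v_{\varepsilon,n,m}(T,\mu)=\int_{\mathbb{R}^{dn}}\frac1n\sum_i g^i_{n,m}(\overline x)\,\mu^{\otimes n}(d\overline x)\to\int_{\mathbb{R}^d}g(x,\mu)\mu(dx)$ as $m\to\infty$ then $n\to\infty$, uniformly over $V_K^{1,2}$ (Lemma~\ref{lem estimate of b^i_n,m...}), and $u_1(T,\cdot)\le\int g\,d\mu$, the value of $\Psi_{\varepsilon,n,m,\delta}$ at $t=T$ is at most $o_{n,m}(1)<\kappa/2$ for $n,m$ large; as the maximum exceeds $\kappa/2$, we conclude $t^*<T$.

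We may therefore use $\varphi(t,\mu):=v_{\varepsilon,n,m}(t,\mu)+\delta M_2(\mu)+\lambda(T-t)+c$, with $c$ chosen so that $u_1-\varphi$ attains its maximum value $0$ at $(t^*,\mu^*)$, as a test function for the viscosity subsolution $u_1$: indeed $\varphi\in C^{1,2}([0,T]\times\mathcal{P}_2(\mathbb{R}^d))$ since $v_{\varepsilon,n,m}\in C^{1,2}$ (Theorem~\ref{thm v_e,n,m}) and $M_2$ has the clean derivatives $\partial_t M_2=0$, $\partial_\mu M_2(\mu)(x)=2x$, $\nabla_x\partial_\mu M_2(\mu)(x)=2I_d$, $\partial_\mu^2 M_2(\mu)(x,y)=0$, which obey the growth bounds of Definition~\ref{def. of C1,2}. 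Inserting $\varphi$ (with $\partial_t\varphi=\partial_t v_{\varepsilon,n,m}-\lambda$, $\partial_\mu\varphi(x)=\partial_\mu v_{\varepsilon,n,m}(x)+2\delta x$, $\nabla_x\partial_\mu\varphi(x)=\nabla_x\partial_\mu v_{\varepsilon,n,m}(x)+2\delta I_d$, $\partial_\mu^2\varphi=\partial_\mu^2 v_{\varepsilon,n,m}$) into the subsolution inequality of Definition~\ref{def. of vis sol} at $(t^*,\mu^*)$ and rearranging gives
\[
\lambda\le\mathscr{L}[v_{\varepsilon,n,m}](t^*,\mu^*)+\delta\!\int_{\mathbb{R}^d}\sup_{a\in A}\Big\{2\langle b(t^*,x,\mu^*,a),x\rangle+\operatorname{tr}\big((\sigma\sigma^\top)(t^*,x,a)\big)+\operatorname{tr}\big((\sigma^0\sigma^{0;\top})(t^*,x)\big)\Big\}\mu^*(dx),
\]
where $\mathscr{L}[\cdot]$ denotes the differential operator on the left–hand side of \eqref{HJB_intro}. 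By Assumption~\ref{assume:A}(2) and $\mu^*\in V_K^{1,2}$, the penalization term is $O\big(\delta(1+M_2(\mu^*)^{(1+\rho)/2})\big)=O\big(\delta\,K(\delta)^{(1+\rho)/2}\big)=O\big(\delta^{(1-\rho)/2}\big)$, which vanishes as $\delta\to0$ precisely because $\rho<1$ (cf. the remark following Assumption~\ref{assume:B}, since $K(\delta)\sim1/\delta$); this is exactly the mechanism accommodating unbounded dynamics and state–dependent $\sigma^0$.

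It remains to estimate $\mathscr{L}[v_{\varepsilon,n,m}](t^*,\mu^*)$ by comparing \eqref{HJB_intro} with the classical Bellman equation solved by $v_{\varepsilon,n,m}$ (Theorem~\ref{thm v_e,n,m}(4)), rewritten via $\partial_\mu v_{\varepsilon,n,m}(t,\mu)(x)=n\int\nabla_{x^1}\overline v_{\varepsilon,n,m}(t,x,\cdot)\,\mu^{\otimes(n-1)}$ and the analogous identities for $\nabla_x\partial_\mu$ and $\partial_\mu^2$. The discrepancy consists of: the mollification errors $|b^i_{n,m}-b|$, $|f^i_{n,m}-f|$ and the empirical–law errors $\mathcal{W}_1(\widehat\mu^{n,\overline x},\mu)$, which vanish uniformly over $V_K^{1,2}$ as $m\to\infty$ then $n\to\infty$ (Lemma~\ref{lem estimate of b^i_n,m...} and a propagation–of–chaos estimate, which is where $\mathcal{W}_1$–Lipschitz continuity is needed); a term from moving $\sup_a$ past the integral over the remaining $n-1$ particles, which has the favorable sign; and the term $-\tfrac{n\varepsilon^2}{2}\int\operatorname{tr}(\nabla^2_{x^1x^1}\overline v_{\varepsilon,n,m})\,\mu^{\otimes n}$ from the $\varepsilon^2 I_d$ regularization, which is $\le\tfrac{n d\,\varepsilon^2 C_{n,m}}{2}$ by the one–sided lower Hessian bound of Lemma~\ref{lem. classical sol. of smooth approx.}(2) and hence $\to0$ for fixed $n,m$ as $\varepsilon\to0$. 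Thus $\mathscr{L}[v_{\varepsilon,n,m}](t^*,\mu^*)\le e(\varepsilon,n,m)$ with $e(\varepsilon,n,m)\to0$, so the displayed inequality reads $\lambda\le e(\varepsilon,n,m)+O(\delta^{(1-\rho)/2})$; fixing $\lambda$ first, then letting $\delta\to0$ and $(\varepsilon,n,m)$ run through the appropriate order, forces $\lambda\le0$, a contradiction. Hence $u_1\le v$, and symmetrically $v\le u_2$. I expect the main obstacle to be exactly the orchestration of this multi–parameter limit: the compactness radius $K(\delta)\sim1/\delta$, the regularization error $\sim n\varepsilon^2 C_{n,m}$ (through the $\varepsilon$–dependent Hessian bound), and the mollification/particle errors must all be made small simultaneously while $v_{\varepsilon,n,m}(t_0,\mu_0)$ stays close to $v(t_0,\mu_0)$ — which is where the hypotheses $\rho<1$, the $\mathcal{W}_1$–Lipschitz continuity, and the quantitative estimates of Section~\ref{approximation} are indispensable; a secondary, bookkeeping–heavy difficulty is the supersolution direction $v\le u_2$, where the test functions live over $\mathcal{P}_2(\mathbb{R}^d\times A)$ and the dependence on the initial random variable and control must be tracked through the projections $\pi_d,\pi_{d\times d}$ of Definition~\ref{def. of vis sol}.
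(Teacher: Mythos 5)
Your proposal is correct and follows essentially the same route as the paper: moment penalization à la Soner--Yan, the compactness of $V_K^{1,2}$ from Lemma~\ref{lem. compact of V^p_K}, the smooth finite-dimensional approximations $v_{\varepsilon,n,m}$, the clean $L$-derivatives of $M_2$, the $O(\delta^{(1-\rho)/2})$ estimate using $\rho<1$ and $K(\delta)\sim 1/\delta$, and the supersolution direction handled over $\mathcal{P}_2(\mathbb{R}^d\times A)$ via the modified definition. The only cosmetic difference is your linear time penalty $\lambda(T-t)$ in place of the paper's exponential discounting $e^{t-t_0}$ (which introduces a $-u$ zeroth-order term into the transformed equation); both are standard and equivalent devices for extracting a strict margin at the maximum point.
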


\begin{proof}
Recalling the function $v_0$ defined in \eqref{def. v_e=V_e mfc} with $\varepsilon=0$, we shall prove that $u_1\leq v_{0}$ and $v_{0} \leq u_2$ on $[0,T]\times \mathcal{P}_2(\mathbb{R}^d)$. We prove the cases for subsolution and supersolution separately.

\medskip

\noindent {\bf Part 1. Proof of  $u_1\leq v_{0}$:} Let $u_1$ be a bounded viscosity subsolution of the equation \eqref{HJB_intro}. To prove $u_1\leq v_{0}$ by contradiction, we assume  that there exists $(t_0,\widetilde{\mu}_0)\in [0,T] \times \mathcal{P}_2(\mathbb{R}^d)$ such that
		\begin{align*}
			(u_1-v_{0})(t_0,\widetilde{\mu}_0) >0.
		\end{align*}
		Let $\xi\in L^2(\Omega,\mathcal{F},\mathbb{P};\mathbb{R}^d)$ such that $\mathcal{L}(\xi)=\widetilde{\mu}_0$. For any $k\in \mathbb{N}$, we let $\mu_0^k \in \mathcal{P}_2(\mathbb{R}^d)$ be the law of $\xi\mathds{1}_{\{|\xi|\leq k\}}$. We see that $\mu_0^k \in \mathcal{P}_q(\mathbb{R}^d)$ for any $q\geq1$ and
		$$ \pig[\mathcal{W}_1(\mu_0^k,\widetilde{\mu}_0)\pigr]^2\leq\pig[\mathcal{W}_2(\mu_0^k,\widetilde{\mu}_0)\pigr]^2\leq \mathbb{E}\pig[|\xi\mathds{1}_{\{|\xi|\leq k\}}-\xi|^2\pig] =\int_{|x|>k}|x|^2\widetilde{\mu}_0(dx) \longrightarrow 0$$
		as $k \to \infty$. Therefore, as both $u_1$ and $v_{0}$ are continuous on $[0,T] \times \mathcal{P}_2(\mathbb{R}^d)$, we can find a $k \in \mathbb{N}$ large enough such that $\mu_0 := \mu_0^k\in \mathcal{P}_q(\mathbb{R}^d)$ for any $q\geq1$ and 
		\begin{align}
			(u_1-v_{0})(t_0, \mu_0) >0.
			\label{ineq. u_1-v_0(t_0,mu_0)>0}
		\end{align}
  	 \noindent {\bf Step 1A: Construction of the comparison function:} Recalling the approximations defined in \eqref{def. approx of f}, \eqref{def. approx of g}, \eqref{def. tilde v_e,n,m} and \eqref{eq. v_e,n,m=int bar of v_e,n,m}, we define $\widecheck{u}_1(t,x):=e^{t-t_0}u_1(t,x)$ and similarly for $\widecheck{\overline{v}}_{\varepsilon,n,m}$, $\widecheck{v}_{\varepsilon,n,m}$, $\widecheck{f}^i_{n,m}$, $\widecheck{f}$ from $\overline{v}_{\varepsilon,n,m}$, $v_{\varepsilon,n,m}$, $f^i_{n,m}$, $f$ respectively. We also define $\widecheck{g}:=e^{T-t_0}g$ and $\widecheck{g}^i_{n,m}:=e^{T-t_0}g^i_{n,m}$.  By direct computation, we see that $\widecheck{u}_1$ is a viscosity subsolution of the equation
  \begin{equation}\label{eq. vis subsol of check u1}
			\left\{\begin{aligned}
				&\partial_t u(t,\mu)
				+\int_{\mathbb{R}^d}
				\sup_{a\in A} \Bigg\{\widecheck{f}(t,x,\mu,a)+b(t,x,\mu,a)\cdot \partial_\mu u(t,\mu)(x) \\
				&\h{90pt}+ \dfrac{1}{2}\text{tr}\Big[\pig(\sigma(t,x,a)\big[\sigma(t,x,a)\big]^\top+\sigma^0(t,x)[\sigma^0(t,x)]^\top\pig)\nabla_x\partial_\mu u(t,\mu)(x)\Big]\Bigg\}\mu(dx) \\
				&+ \dfrac{1}{2} \int_{\mathbb{R}^{2d}}\text{tr}\Big[\sigma^0(t,x)[\sigma^0(t,y)]^\top\partial_\mu^2
    u(t,\mu)(x,y)\Big] \, \mu^{\otimes 2}(dx,dy)-u(t,\mu)\\
    &=0\h{5pt} \text{for $(t,\mu) \in [0,T) \times\mathcal{P}_2(\mathbb{R}^d)$};\\
				&u(T,\mu)=\int_{\mathbb{R}^d}\widecheck{g}(x,\mu)\mu(dx) \h{10pt}\text{for $\mu\in \mathcal{P}_2(\mathbb{R}^d)$},
			\end{aligned}\right.
		\end{equation}
in the sense of Definition \ref{def. of vis sol}. By Theorem \ref{thm v_e,n,m}, we obtain that $\widecheck{v}_{\varepsilon,n,m}$ solves the following equation in the classical sense:
		\begin{equation}
			\left\{
			\begin{aligned}
				&\partial_t u(t,\mu)+\int_{\mathbb{R}^{dn}}
				\sup_{\overline{a} \in A^n}\Bigg\{\dfrac{1}{n}\sum^n_{i=1}\widecheck{f}^i_{n,m}(t,\overline{x},a^i)
    +\sum^n_{i=1}\Big\langle b^i_{n,m}(t,\overline{x},a^i)
				,\nabla_{x^i}\widecheck{\overline{v}}_{\varepsilon,n,m}(t,\overline{x})\Big\rangle\\
				&\h{100pt}+\dfrac{1}{2}
				\sum^n_{i=1}\textup{tr}\left[\Big((\sigma\sigma^\top)(t,x^i,a^i)+(\sigma^0\sigma^{0;\top})(t,x^i)+\varepsilon^2I_d\Big)\nabla_{x^ix^i}^2
				\widecheck{\overline{v}}_{\varepsilon,n,m}
				(t,\overline{x})\right]\\
				&\h{100pt}+\dfrac{1}{2}
				\sum^n_{i,j=1,i\neq j}\textup{tr}\left[\sigma^0(t,x^i)\sigma^{0;\top}(t,x^j)\nabla_{x^ix^j}^2
				\widecheck{\overline{v}}_{\varepsilon,n,m}
		(t,\overline{x})\right]\Bigg\}\bbotimes_{k=1}^n\mu(dx^k)
				-u (t,\mu)\\
				&=0\h{5pt} \text{for $(t,\mu) \in [0,T) \times\mathcal{P}_2(\mathbb{R}^{d})$};\\
				& u(T,\mu)
				=\dfrac{1}{n}\sum^n_{i=1}\int_{\mathbb{R}^{dn}}\widecheck{g}^i_{n,m}(\overline{x})\bbotimes_{k=1}^n\mu(dx^k)
				\h{5pt} \text{for $\mu \in \mathcal{P}_2(\mathbb{R}^{d})$},
			\end{aligned}
			\right.    
			\label{eq. vis sol of check v_e,n,m}
		\end{equation}
		where we write 	$\displaystyle\bbotimes_{k=1}^n\mu(dx^k):=\mu(dx^1)\otimes\ldots\otimes\mu(dx^n)$ and $\overline{a}=(a^1,\ldots,a^n)$ for each $a^i \in A$ with $i=1,2,\ldots,n$.
  
Let $l_0:=(u_1-v_{0})(t_0, \mu_0)>0$. Referring to \eqref{moment_V_def} for the definition of $M_{2}(\mu_0)$, we choose a sufficiently small $\delta>0$, depending on $M_{2}(\mu_0)$ and $l_0$ only, such that $u_1(t_0,\mu_0)- v_0(t_0,\mu_0)-\delta M_{2}(\mu_0)\geq l_0/2$ and thus $u_1(t_0,\mu_0)- v_{\varepsilon, n,m}(t_0,\mu_0)-\delta M_{2}(\mu_0)\geq l_0/3$, for small enough $\varepsilon>0$ and large enough $n$, $m\in \mathbb{N}$ depending on $l_0$, by Lemmas \ref{lem. |v_e-v_0|<C_5 e} and \ref{lem. conv. of v_e,n,m to v_e}. Thus, it holds that 
\begin{equation}\label{eq:cpt}
\displaystyle\sup_{(t,\mu)\in [0,T] \times \mathcal{P}_2(\mathbb{R}^d)} \widecheck{u}_1(t,\mu)- \widecheck{v}_{\varepsilon, n,m}(t,\mu)-\delta M_{2}(\mu)\geq l_0/3,
\end{equation}
for small enough $\varepsilon>0$ and large enough $n$, $m\in \mathbb{N}$ depending on $l_0$. Define the set 
\begin{align*}
U^2_{\delta,\varepsilon, n,m}:=\pig\{(t,\mu) \in [0,T]\times\mathcal{P}_2(\mathbb{R}^d):\widecheck{u}_1(t,\mu)- \widecheck{v}_{\varepsilon, n,m}(t,\mu)-\delta M_{2}(\mu)\geq l_0/3\pig\},
\end{align*}
which is non-empty as $u_1(t_0,\mu_0)- v_{\varepsilon, n,m}(t_0,\mu_0)-\delta M_{2}(\mu_0)\geq l_0/3$. It is obvious that
\begin{align*}
    \displaystyle\sup_{(t,\mu)\in [0,T] \times \mathcal{P}_2(\mathbb{R}^d)} \widecheck{u}_1(t,\mu)- \widecheck{v}_{\varepsilon, n,m}(t,\mu)-\delta M_{2}(\mu)
    =\displaystyle\sup_{(t,\mu)\in U^2_{\delta,\varepsilon, n,m}} \widecheck{u}_1(t,\mu)- \widecheck{v}_{\epsilon, n,m}(t,\mu)-\delta M_{2}(\mu).
\end{align*}
Since $\widecheck{u}_1$ and $\widecheck{v}_{\varepsilon,n,m}$ are bounded independent of $\varepsilon,n,m$ by Theorem \ref{thm v_e,n,m}, it yields that for any $\mu \in U^2_{\delta,\varepsilon, n,m}$,
\begin{align}
\label{bound_V}
\delta M_2(\mu)
\leq l_0/3+e^T\lVert u_1\rVert_{\infty} + e^T\lVert v_{\varepsilon,n,m} \rVert_{\infty}
\leq l_0/3+e^T(\lVert u_1\rVert_{\infty} +\ell_2)
< +\infty,
\end{align}
where $\ell_2$ is the constant given in Theorem \ref{thm v_e,n,m}. Then we see that
\begin{align*}
U^2_{\delta,\varepsilon, n,m}\subset\Big\{(t,\mu) \in [0,T]\times\mathcal{P}_2(\mathbb{R}^d):
M_{2}(\mu)\leq \dfrac{1}{\delta}\pig[l_0/3+e^T(\lVert u_1\rVert_{\infty} +\ell_2)\pig]\Big\}=:U^2_\delta.
\end{align*}
The set $U^2_\delta$ is compact in $\big([0,T],|\cdot|\big) \times (\mathcal{P}_{2}(\mathbb{R}^d),\mathcal{W}_1)$ due to Lemma \ref{lem. compact of V^p_K}. Note that $\widecheck{u}_1(t,\mu)$ is $\big([0,T],|\cdot|\big) \times (\mathcal{P}_{2}(\mathbb{R}^d),\mathcal{W}_1)$-continuous by our definition of viscosity solution, $\widecheck{v}_{\varepsilon, n,m}(t,\mu)$ is $\big([0,T],|\cdot|\big) \times (\mathcal{P}_{2}(\mathbb{R}^d),\mathcal{W}_1)$-continuous by Lemma \ref{v_W1_continuous}, and $-\delta M_2(\mu)$ is $\mathcal{W}_1$-upper semicontinuous from Remark \ref{V_lower_semi}, therefore $\widecheck{u}_1-\widecheck{v}_{\varepsilon,n,m}-\delta M_2$ is upper semicontinuous in $\big([0,T],|\cdot|\big) \times (\mathcal{P}_{2}(\mathbb{R}^d),\mathcal{W}_1)$. It implies that the set $U^2_{\delta,\varepsilon, n,m}$ is a closed subset of $U^2_\delta$ under $\big([0,T],|\cdot|\big) \times (\mathcal{P}_{2}(\mathbb{R}^d),\mathcal{W}_1)$. Therefore, $U^2_{\delta,\varepsilon, n,m}$ is compact in $\big([0,T],|\cdot|\big) \times (\mathcal{P}_{2}(\mathbb{R}^d),\mathcal{W}_1)$. The same arguments imply that  $U^2_{\delta,\varepsilon, n,m}$ is compact in $\big([0,T],|\cdot|\big) \times (\mathcal{P}_{1}(\mathbb{R}^d),\mathcal{W}_1)$ as well. Let $\{(t_k,\mu_k)\}_{k\in\mathbb{N}}$ be a sequence in $U^2_{\delta,\varepsilon, n,m}$ such that 
\begin{align}
\label{about_to_limsup}
    \widecheck{u}_1(t_k,\mu_k)- \widecheck{v}_{\varepsilon, n,m}(t_k,\mu_k)-\delta M_{2}(\mu_k)>\displaystyle\sup_{(t,\mu)\in U^2_{\delta,\epsilon, n,m}} \widecheck{u}_1(t,\mu)- \widecheck{v}_{\varepsilon, n,m}(t,\mu)-\delta M_{2}(\mu)-\frac{1}{k}.
\end{align}
By the compactness of $U^2_{\delta,\varepsilon, n,m}$ and the upper semicontinuity of $\widecheck{u}_1-\widecheck{v}_{\varepsilon,n,m}-\delta M_2$, there exists $(\widetilde{t},\widetilde{\mu}) \in [0,T] \times U^2_{\delta,\varepsilon, n,m}$ such that $(t_k,\mu_k)\to(\widetilde{t},\widetilde{\mu})$ in $\big([0,T],|\cdot|\big) \times (\mathcal{P}_{2}(\mathbb{R}^d),\mathcal{W}_1)$. From \eqref{about_to_limsup}, we conclude that 
\begin{align*}
    \widecheck{u}_1(\widetilde{t},\widetilde{\mu})- \widecheck{v}_{\varepsilon, n,m}(\widetilde{t},\widetilde{\mu})-\delta M_{2}(\widetilde{\mu})\geq& \limsup_k \pig[\widecheck{u}_1(t_k,\mu_k)- \widecheck{v}_{\varepsilon, n,m}(t_k,\mu_k)-\delta M_{2}(\mu_k)\pig]\\
    \geq&\sup_{(t,\mu)\in U^2_{\delta,\varepsilon, n,m}} \widecheck{u}_1(t,\mu)- \widecheck{v}_{\varepsilon, n,m}(t,\mu)-\delta M_{2}(\mu),
\end{align*}
and the maximum is attained at $(\widetilde{t},\widetilde{\mu})$. We note that this maximum point depends on $\delta,\varepsilon, n,m$.\\
\hfill\\
 \noindent{\bf Step 1B. Proof of $\widetilde{t}<T$:} In this step, we aim to prove that $\widetilde{t}<T$. Suppose, on the contrary, that  $\widetilde{t}=T$. The definition of the point $(\widetilde{t},\widetilde{\mu})=(T,\widetilde{\mu})$ implies that $u_1(t_0,\mu_0) - v_{\varepsilon,n,m}(t_0,\mu_0)-\delta M_{2}(\mu_0)\leq  \widecheck{u}_1(T,\widetilde{\mu}) - \widecheck{v}_{\varepsilon,n,m}(T,\widetilde{\mu})-\delta M_{2}(\widetilde{\mu})\leq  \widecheck{u}_1(T,\widetilde{\mu}) - \widecheck{v}_{\varepsilon,n,m}(T,\widetilde{\mu})$. Thus, for $\widehat{\mu}^{n,\overline{x}}:=\dfrac{1}{n}\displaystyle\sum^n_{j=1} \delta_{x^j}$, we can use the terminal conditions in equations \eqref{eq. vis subsol of check u1} and \eqref{eq. vis sol of check v_e,n,m} to obtain that 
		\begin{align*}
			&\h{-10pt}u_1(t_0,\mu_0) - v_{\varepsilon,n,m}(t_0,\mu_0)\\
			\leq\,& \dfrac{e^{T-t_0}}{n}
			\sum^n_{i=1}\left[
			\int_{\mathbb{R}^{dn}}
			\left(g(x^i,\widetilde{\mu})
			-g^i_{n,m}(x^1,\ldots,x^n)\right)
			\bbotimes_{k=1}^n\widetilde{\mu}(dx^k)
			\right]+\delta M_{2}(\mu_0)\\
			=\,& \dfrac{e^{T-t_0}}{n}
			\sum^n_{i=1}\left[
			\int_{\mathbb{R}^{dn}}
			\left(g(x^i,\widetilde{\mu})
			-g(x^i,\widehat{\mu}^{n,\overline{x}})\right)
			\bbotimes_{k=1}^n\widetilde{\mu}(dx^k)
			\right]\\
			&+\dfrac{e^{T-t_0}}{n}
			\sum^n_{i=1}\left[
			\int_{\mathbb{R}^{dn}}
			\left(g(x^i,\widehat{\mu}^{n,\overline{x}})
			-g^i_{n,m}(x^1,\ldots,x^n)\right)
			\bbotimes_{k=1}^n\widetilde{\mu}(dx^k)
			\right]
   +\delta M_{2}(\mu_0).
		\end{align*}
		Using the Lipschitz property of $g$ in Assumption \ref{assume:A} and (2) of Lemma \ref{lem estimate of b^i_n,m...}, we further have
		\begin{align}
			&\h{-10pt}u_1(t_0,\mu_0) - v_{\varepsilon,n,m}(t_0,\mu_0)\nonumber\\
			\leq\,&Ke^{T-t_0}
			\left[
			\int_{\mathbb{R}^{dn}}
			\mathcal{W}_1(\widetilde{\mu},\widehat{\mu}^{n,\overline{x}})
			\bbotimes_{k=1}^n\widetilde{\mu}(dx^k)
			\right]+\dfrac{2Ke^{T-t_0}}{n}
			\left[
			m^{dn}\int_{\mathbb{R}^{dn}}
			\left(\sum^n_{i=1}|y^i|\right) \prod^n_{j=1}\Phi(my^j)dy^j
			\right]\nonumber\\
   &+\delta M_{2}(\mu_0).
			\label{1139}
		\end{align}
  From \cite[Theorem 1]{FG15}, there is a constant $C_d>0$ depending on $d$ only and a sequence $\{h_n\}_{n \in \mathbb{N}} \subset \mathbb{R}$ such that 
  \begin{align}
			\int_{\mathbb{R}^{dn}}
			\mathcal{W}_1(\widetilde{\mu},\widehat{\mu}^{n,\overline{x}})
			\bbotimes_{k=1}^n\widetilde{\mu}(dx^k)
			\leq C_{d} h_n\left[\int_{\mathbb{R}^{d}}|x|^{q_0}\widetilde{\mu}(dx)\right]^{1/q_0}.
   \label{def. h_n}
		\end{align}
The sequence $\{h_n\}_{n \in \mathbb{N}}$ and the number $q_0$ are given by
  \[
h_n=
\left\{
\begin{array}{ll}
n^{-1/2} + n^{-(q_0-1)/q_0} & \text{if } d=1; \\
n^{-1/2} \log(1+n) + n^{-(q_0-1)/q_0} & \text{if }  d=2; \\
n^{-1/d} + n^{-(q_0-1)/q_0} & \text{if } d>2,
\end{array}
\right.
\quad \text{with}\quad
q_0=
\left\{
\begin{array}{ll}
3/2 & \text{if } d=1,2; \\
5/3 & \text{if } d>2,
\end{array}
\right.
\]
where $h_n$ satisfies $\lim_{n\to \infty}h_n=0$. Inequalities \eqref{bound_V} and \eqref{def. h_n} further imply
		\begin{align}
			\dfrac{1}{C_dh_n}\int_{\mathbb{R}^{dn}}
			\mathcal{W}_1(\widetilde{\mu},\widehat{\mu}^{n,\overline{x}})
			\bbotimes_{k=1}^n\widetilde{\mu}(dx^k)
			\leq
			\left[\int_{\mathbb{R}^{d}}
			|x|^2
			\widetilde{\mu}(dx)\right]^{1/2} 
   \leq\dfrac{1}{\delta^{1/2}}\pig[ l_0/3+e^T(\lVert u_1\rVert_{\infty} +\ell_2)\pigr]^{1/2}.
			\label{ineq. int W_2(tilde mu- hat mu^n,x)}
		\end{align}
		Hence, from \eqref{1139} and \eqref{ineq. int W_2(tilde mu- hat mu^n,x)}, we have,
		\begin{align*}
			&\h{-10pt}u_1(t_0,\mu_0) - v_{\varepsilon,n,m}(t_0,\mu_0)\\
   \leq\,& \dfrac{K C_dh_ne^{T-t_0}}{\delta^{1/2}}\pig[ l_0/3+e^T(\lVert u_1\rVert_{\infty} +\ell_2)\pigr]^{1/2}
	+\dfrac{2Ke^{T-t_0}m^{dn}}{n}
			\left[
			\int_{\mathbb{R}^{dn}}
			\left(\sum^n_{i=1}|y^i|\right) \prod^n_{j=1}\Phi(my^j)dy^j
			\right]\\
   &+\delta M_{2}(\mu_0).
		\end{align*}
		Passing $m \to \infty$ and then $n \to \infty$ subsequently, we use the fact that $h_n \to 0$ to yield that 
		\begin{align*}
			u_1(t_0,\mu_0) - \lim_{n \to \infty}\lim_{m \to \infty} v_{\varepsilon,n,m}(t_0,\mu_0)
			\leq \delta M_{2}(\mu_0).
		\end{align*}
		Finally, using Lemmas \ref{lem. |v_e-v_0|<C_5 e} and \ref{lem. conv. of v_e,n,m to v_e}, we pass $\varepsilon \to 0^+$ then $\delta\to 0^+$ to conclude that $u_1(t_0,\mu_0)-v_0(t_0,\mu_0)\leq 0$, which contradicts \eqref{ineq. u_1-v_0(t_0,mu_0)>0} and thus $\widetilde{t}<T$.\\
  \hfill\\
   \noindent{\bf Step 1C. Estimate of  $u_1-v_{\varepsilon,n,m}$:}  We assume the maximum value of $\widecheck{u}_1(t,\mu)- \widecheck{v}_{\varepsilon, n,m}(t,\mu)-\delta M_{2}(\mu)$ attained at $(\widetilde{t},\widetilde{\mu})$ over $[0,T]\times \mathcal{P}_2(\mathbb{R}^d)$ is $M^* \in \mathbb{R}$. As $\widecheck{v}_{\varepsilon, n,m}(t,\mu)+\delta M_{2}(\mu) \in C^{1,2}([0,T]\times \mathcal{P}_2(\mathbb{R}^d))$, we use the fact that $\widecheck{u}_1$ is the viscosity subsolution of \eqref{eq. vis subsol of check u1} to see that
\begin{align*}
			0\leq\,&\partial_t (\widecheck{v}_{\varepsilon,n,m}+\delta  M_2) (\widetilde{t},\widetilde{\mu})
			-(\widecheck{v}_{\varepsilon,n,m}+\delta  M_2) (\widetilde{t},\widetilde{\mu})-M^*\\
			&+\int_{\mathbb{R}^d}\sup_{a \in A}\Bigg\{
			\widecheck{f}(\widetilde{t},x,\widetilde{\mu},a)+ \Big\langle b (\widetilde{t},x,\widetilde{\mu},a),\p_\mu (\widecheck{v}_{\varepsilon,n,m}+\delta  M_2) (\widetilde{t},\widetilde{\mu})(x)\Big\rangle
			\\
			&\h{55pt}
			+\dfrac{1}{2} \textup{tr}\Big\{\pig[\sigma(\widetilde{t},x,a)\big[\sigma(\widetilde{t},x,a)\big]^\top+\sigma^0(\widetilde{t},x)[\sigma^0(\widetilde{t},x)]^\top\pig]
			\nabla_x\p_\mu(\widecheck{v}_{\varepsilon,n,m}+\delta  M_2) (\widetilde{t},\widetilde{\mu})(x)\Big\}\Bigg\}\widetilde{\mu}(dx)\\
   				&+ \dfrac{1}{2} \int_{\mathbb{R}^{2d}}\text{tr}\Big[\sigma^0(\widetilde{t},x)[\sigma^0(\widetilde{t},y)]^\top\partial_\mu^2
    (\widecheck{v}_{\varepsilon,n,m}+\delta  M_2)(\widetilde{t},\widetilde{\mu})(x,y)\Big] \, \widetilde{\mu}^{\otimes 2}(dx,dy).
		\end{align*}
		Therefore, as $\widecheck{u}_1(\widetilde{t},\widetilde{\mu}) - \widecheck{v}_{\varepsilon,n,m}(\widetilde{t},\widetilde{\mu})-\delta  M_2(\widetilde{\mu})\leq M^*$ and $\widecheck{v}_{\varepsilon,n,m}$ solves \eqref{eq. vis sol of check v_e,n,m}, we further have
  \begin{align}
			&\h{-10pt}(\widecheck{u}_1-\widecheck{v}_{\varepsilon,n,m})(\widetilde{t},\widetilde{\mu})\nonumber\\
			\leq\,&
			\int_{\mathbb{R}^d}\sup_{a \in A}\Bigg\{
			\widecheck{f}(\widetilde{t},x,\widetilde{\mu},a)+ \Big\langle b (\widetilde{t},x,\widetilde{\mu},a),\p_\mu (\widecheck{v}_{\varepsilon,n,m}+\delta  M_2) (\widetilde{t},\widetilde{\mu})(x)\Big\rangle
			\nonumber\\
			&\h{45pt}
			+\dfrac{1}{2} \textup{tr}\Big\{\pig[\sigma(\widetilde{t},x,a)\big[\sigma(\widetilde{t},x,a)\big]^\top+\sigma^0(\widetilde{t},x)[\sigma^0(\widetilde{t},x)]^\top\pig]
			\nabla_x\p_\mu(\widecheck{v}_{\varepsilon,n,m}+\delta  M_2) (\widetilde{t},\widetilde{\mu})(x)\Big\}\Bigg\}\widetilde{\mu}(dx)\nonumber\\
   				&+ \dfrac{1}{2} \int_{\mathbb{R}^{2d}}\text{tr}\Big[\sigma^0(\widetilde{t},x)[\sigma^0(\widetilde{t},y)]^\top\partial_\mu^2
    (\widecheck{v}_{\varepsilon,n,m}+\delta  M_2)(\widetilde{t},\widetilde{\mu})(x,y)\Big] \, \widetilde{\mu}^{\otimes 2}(dx,dy)\nonumber\\
			&-\int_{\mathbb{R}^{dn}}
			\sup_{\overline{a} \in A^n}\Bigg\{\dfrac{1}{n}\sum^n_{i=1}\widecheck{f}^i_{n,m}(\widetilde{t},\overline{x},a^i)       
            +\sum^n_{i=1}\Big\langle b^i_{n,m}(\widetilde{t},\overline{x},a^i),\nabla_{x^i}\widecheck{\overline{v}}_{\varepsilon,n,m}(\widetilde{t},\overline{x})\Big\rangle\nonumber\\
&\h{60pt}+\dfrac{1}{2}
\sum^n_{i=1}\textup{tr}\left[\Big((\sigma\sigma^\top)(\widetilde{t},x^i,a^i)+(\sigma^0\sigma^{0;\top})(\widetilde{t}\,,x^i)+\varepsilon^2I_d\Big)\nabla_{x^ix^i}^2
			\widecheck{\overline{v}}_{\varepsilon,n,m}
		(\widetilde{t},\overline{x})\right]\nonumber\\
&\h{60pt}+\dfrac{1}{2}
			\sum^n_{i,j=1,i\neq j}\textup{tr}\left[\sigma^0(\widetilde{t}\,,x^i)\sigma^{0;\top}(\widetilde{t}\,,x^j)\nabla_{x^ix^j}^2
			\widecheck{\overline{v}}_{\varepsilon,n,m}
			(\widetilde{t},\overline{x})\right]
\Bigg\}\bbotimes_{k=1}^n\widetilde{\mu}(dx^k).
			\label{ineq. check u1-check v_e,n,m}
		\end{align}
		We divide the estimate into three parts: the part involving $ M_2$, the part involving $\widecheck{\overline{v}}_{\varepsilon,n,m}$ and the term $(\widecheck{u}_1-\widecheck{v}_{\varepsilon,n,m})(\widetilde{t},\widetilde{\mu})$. First, by direct computation, we obtain $\partial_\mu M_2(\mu)(x) = 2x$, $\partial_\mu^2 M_2(\mu)(x,y) = 0$ and $\nabla_x\partial_\mu M_2(\mu)(x) = 2I_d$. Assumption \ref{assume:A} and \eqref{ineq. int W_2(tilde mu- hat mu^n,x)} tell us that
\begin{align}
			&\h{-10pt}\int_{\mathbb{R}^d}\sup_{a \in A}\bigg\{
			\left\langle b (\widetilde{t},x,\widetilde{\mu},a),\p_\mu  M_2(\widetilde{\mu})(x)\right\rangle
   +\dfrac{1}{2} \textup{tr}\Big\{\pig[(\sigma\sigma^\top)(\widetilde{t},x,a)+\sigma^0(\widetilde{t},x)[\sigma^0(\widetilde{t},x)]^\top
			\pig]
			\nabla_x\p_\mu  M_2 (\widetilde{\mu})(x)\Big\}\bigg\}\widetilde{\mu}(dx)\nonumber\\
+\,& \dfrac{1}{2} \int_{\mathbb{R}^{2d}}\text{tr}\Big[\sigma^0(\widetilde{t},x)[\sigma^0(\widetilde{t},y)]^\top\partial^2_\mu M_2(\widetilde{\mu})(x,y)\Big] \, \widetilde{\mu}^{\otimes 2}(dx,dy)\nonumber\\
			=\,&\int_{\mathbb{R}^d}\sup_{a \in A}\bigg\{
			\left\langle b (\widetilde{t},x,\widetilde{\mu},a),2x\right\rangle
   +\textup{tr}\pig[(\sigma\sigma^\top)(\widetilde{t},x,a)
			+\sigma^0(t,x)[\sigma^0(t,x)]^\top\pig]
			\bigg\}\widetilde{\mu}(dx)\nonumber\\
   \leq\,& \int_{\mathbb{R}^d}\Big[2K(1+|x|^{\rho})|x| + 4K^2(1+|x|^{2\rho}) \Big]\widetilde{\mu}(dx) \nonumber\\
   \leq\,& C_K\int_{\mathbb{R}^d}(1+|x|^{\rho+1})\widetilde{\mu}(dx)\nonumber\\
   \leq\,&C_K+C_K\big[M_2(\widetilde{\mu})\big]^{(1+\rho)/2}\nonumber\\
   \leq \,&C_K+\frac{C_K}{\delta^{(1+\rho)/2}}\pig[ l_0/3+e^T(\lVert u_1\rVert_{\infty} +\ell_2)\pigr]^{(1+\rho)/2},
   \label{1045}
		\end{align}
for some fixed constant $C_K$ depending only on $K$. Second, we recall the representation of $\widecheck{v}_{\varepsilon,n,m}(t,\mu)
		=e^{t-t_0}v_{\varepsilon,n,m}(t,\mu)$ from \eqref{eq. v_e,n,m=int bar of v_e,n,m}, as well as (1) and (2) of Theorem \ref{thm v_e,n,m}. Using inequalities \eqref{ineq.|D v_e,n,m|} and \eqref{ineq.|D^2 v_e,n,m|}, we can directly compute that
		\begin{align}
			\p_\mu\widecheck{v}_{\varepsilon,n,m}(t,\mu)(x)
			=\sum^n_{i=1}
			\int_{\mathbb{R}^{d(n-1)}}
			\nabla_{x^i}\widecheck{\overline{v}}_{\varepsilon,n,m}
			(t,\overline{x})\Big|_{x^i=x}
			\bbotimes^n_{k=1,k\neq i}\mu(dx^k);
		\end{align}
		and use (2) in Theorem \ref{thm v_e,n,m} to yield that
\begin{align*}
	\p^2_\mu\widecheck{v}_{\varepsilon,n,m}(t,\mu)(x,y)
   =\sum^n_{i=1}\sum^n_{j=1,j\neq i}
			\int_{\mathbb{R}^{d(n-2)}}
			\nabla_{x^ix^j}^2\widecheck{\overline{v}}_{\varepsilon,n,m}
			(t,\overline{x})\Big|_{x^i=x,x^j=y}
			\bbotimes^n_{k=1,k\neq i,j}\mu(dx^k).
\end{align*}
		Hence, we estimate the term 
	\begin{align}
			&\int_{\mathbb{R}^d}\sup_{a \in A}\bigg\{\widecheck{f}(\widetilde{t},y,\widetilde{\mu},a)
			+\dfrac{1}{2} \textup{tr}\Big\{\pig[(\sigma\sigma^\top)(\widetilde{t},y,a)+(\sigma^0\sigma^{0;\top})(\widetilde{t},y)\pig]
			\nabla_x\p_\mu\widecheck{v}_{\varepsilon,n,m}(\widetilde{t},\widetilde{\mu})(y)\Big\}\nonumber\\
			&\h{40pt}
			+ \Big\langle b (\widetilde{t},y,\widetilde{\mu},a),\p_\mu \widecheck{v}_{\varepsilon,n,m} (\widetilde{t},\widetilde{\mu})(y)\Big\rangle\Bigg\}\widetilde{\mu}(dy)\nonumber\\
   &+ \dfrac{1}{2} \int_{\mathbb{R}^{2d}}\text{tr}\Big[\sigma^0(\widetilde{t},z)[\sigma^0(\widetilde{t},y)]^\top\partial_\mu^2
    \widecheck{v}_{\varepsilon,n,m}(\widetilde{t},\widetilde{\mu})(z,y)\Big] \, \widetilde{\mu}^{\otimes 2}(dz,dy)\nonumber\\
      &=\int_{\mathbb{R}^d}\sup_{a \in A}\Bigg\{\sum^n_{j=1}\int_{\mathbb{R}^{d(n-1)}}\dfrac{1}{n}\widecheck{f}(\widetilde{t},y,\widetilde{\mu},a)\bbotimes^n_{k=1,k\neq j}\widetilde{\mu}(dx^k)\nonumber\\
			&\h{60pt}+\dfrac{1}{2} \textup{tr}\bigg\{\pig[(\sigma\sigma^\top)(\widetilde{t},y,a)+(\sigma^0\sigma^{0;\top})(\widetilde{t},y)\pig]
			\sum^n_{j=1}
			\int_{\mathbb{R}^{d(n-1)}}
			\nabla_{x^jx^j}^2\widecheck{\overline{v}}_{\varepsilon,n,m}
			(t,\overline{x})\Big|_{x^j=y}
			\bbotimes^n_{k=1,k\neq j}\widetilde{\mu}(dx^k)\bigg\}\nonumber\\
			&\h{60pt}+ \Big\langle b (\widetilde{t},y,\widetilde{\mu},a),\sum^n_{j=1}
			\int_{\mathbb{R}^{d(n-1)}}
			\nabla_{x^j}\widecheck{\overline{v}}_{\varepsilon,n,m}
			(t,\overline{x})\Big|_{x^j=y}
			\bbotimes^n_{k=1,k\neq j}\widetilde{\mu}(dx^k)\Big\rangle\Bigg\}\widetilde{\mu}(dy)\nonumber\\
   &\h{10pt}+\dfrac{1}{2} \int_{\mathbb{R}^{2d}}\textup{tr}\left\{\pig[\sigma^0(\widetilde{t},z)\sigma^{0;\top}(\widetilde{t},y)\pig]
			\sum^n_{j=1}
			\sum^n_{l=1,l\neq j}
			\int_{\mathbb{R}^{d(n-2)}}
			\nabla_{x^jx^l}^2\widecheck{\overline{v}}_{\varepsilon,n,m}
			(t,\overline{x})\Big|_{x^j=z,x^l=y}
			\bbotimes^n_{k=1,k\neq j,l}\widetilde{\mu}(dx^k)\right\}\widetilde{\mu}^{\otimes 2}(dz,dy)\nonumber\\
         &\leq\sum^n_{j=1}\int_{\mathbb{R}^{dn}}\sup_{a^j \in A}\Bigg\{\dfrac{1}{n}\widecheck{f}(\widetilde{t},x^j,\widetilde{\mu},a^j)
			+\dfrac{1}{2} \textup{tr}\bigg\{\pig[(\sigma\sigma^\top)(\widetilde{t},x^j,a^j)+(\sigma^0\sigma^{0;\top})(\widetilde{t},x^j)\pig]
			\nabla_{x^jx^j}^2\widecheck{\overline{v}}_{\varepsilon,n,m}
			(t,\overline{x})\bigg\}\nonumber\\
			&\h{80pt}+ \Big\langle b (\widetilde{t},x^j,\widetilde{\mu},a^j),
			\nabla_{x^j}\widecheck{\overline{v}}_{\varepsilon,n,m}
			(t,\overline{x})
			\Big\rangle\Bigg\}\bbotimes^n_{k=1}\widetilde{\mu}(dx^k)\nonumber\\
  &\h{10pt}+\dfrac{1}{2}\sum^n_{j=1}
			\sum^n_{l=1,l\neq j} \int_{\mathbb{R}^{dn}}\textup{tr}\left\{\pig[\sigma^0(\widetilde{t},x^j)\sigma^{0;\top}(\widetilde{t},x^l)\pig]
			\nabla_{x^jx^l}^2\widecheck{\overline{v}}_{\varepsilon,n,m}
			(t,\overline{x})
			\right\}\bbotimes^n_{k=1}\widetilde{\mu}(dx^k).
			\label{1941}
		\end{align}
  Third, as $\widecheck{u}_1- \widecheck{v}_{\varepsilon,n,m}-\delta  M_2$ attains its maximum over the space $[0,T] \times \mathcal{P}_2(\mathbb{R}^d)$ at $(\widetilde{t},\widetilde{\mu})$, it holds that
\begin{align}
	(u_1- v_{\varepsilon,n,m}-\delta  M_2)(t_0,\mu_0) = (\widecheck{u}_1- \widecheck{v}_{\varepsilon,n,m}-\delta  M_2)(t_0,\mu_0)
			&\leq \widecheck{u}_1(\widetilde{t},\widetilde{\mu}) - \widecheck{v}_{\varepsilon,n,m}(\widetilde{t},\widetilde{\mu})
			-\delta  M_2(\widetilde{\mu})\nonumber\\
   &\leq \widecheck{u}_1(\widetilde{t},\widetilde{\mu}) - \widecheck{v}_{\varepsilon,n,m}(\widetilde{t},\widetilde{\mu}).
			\label{check u_1 - check v_e,n,m at t_0< at tilde t }
		\end{align}		
After substituting \eqref{1045}, \eqref{1941} and \eqref{check u_1 - check v_e,n,m at t_0< at tilde t } into \eqref{ineq. check u1-check v_e,n,m}, we make use of \eqref{ineq.|D v_e,n,m|} and \eqref{ineq.|D^2 v_e,n,m|} to deduce that
\begin{align}
			&\h{-10pt}(u_1- v_{\varepsilon,n,m})(t_0,\mu_0)
   -\delta M_{2}(\mu_0)\nonumber\\
			\leq\,&
			\delta\left[\frac{C_K}{\delta^{(1+\rho)/2}}\pig[ l_0/3+e^T(\lVert u_1\rVert_{\infty} +\ell_2)\pigr]^{(1+\rho)/2} +C_K\right]\nonumber\\
&+\int_{\mathbb{R}^{dn}}
			\sum^n_{i=1}\sup_{a^i \in A}
			\Bigg\{\dfrac{1}{n}
			\widecheck{f}(\widetilde{t},x^i,\widetilde{\mu},a^i)
			-\dfrac{1}{n}\widecheck{f}^i_{n,m}(\widetilde{t},\overline{x},a^i)
			-\dfrac{\varepsilon^2}{2}\textup{tr}\nabla_{x^ix^i}^2
			\widecheck{\overline{v}}_{\varepsilon,n,m}
			(\widetilde{t},\overline{x})\nonumber\\
			&\h{130pt}+\left\langle b (\widetilde{t},x^i,\widetilde{\mu},a^i)-b^i_{n,m}(\widetilde{t},\overline{x},a^i),\nabla_{x^i}\widecheck{\overline{v}}_{\varepsilon,n,m}(\widetilde{t},\overline{x})\right\rangle
			\Bigg\}\bbotimes^n_{k=1}\widetilde{\mu}(dx^k)\nonumber\\
			\leq\,&
			\delta\left[\frac{C_K}{\delta^{(1+\rho)/2}}\pig[ l_0/3+e^T(\lVert u_1\rVert_{\infty} +\ell_2)\pigr]^{(1+\rho)/2} + C_K\right]
			\nonumber\\
			&
			+\int_{\mathbb{R}^{dn}}
			\sum^n_{i=1}\sup_{a^i \in A}
			\Bigg\{\dfrac{e^{\widetilde{t}-t_0}}{n}
			\pig|f(\widetilde{t},x^i,\widetilde{\mu},a^i)
			-f^i_{n,m}(\widetilde{t},\overline{x},a^i)\pig|
			-\dfrac{\varepsilon^2}{2}\textup{tr}\nabla_{x^ix^i}^2
			\widecheck{\overline{v}}_{\varepsilon,n,m}
			(\widetilde{t},\overline{x})\nonumber\\
			&\h{130pt}+\dfrac{C_4e^{\widetilde{t}-t_0}}{n}\pig|b (\widetilde{t},x^i,\widetilde{\mu},a^i)-b^i_{n,m}(\widetilde{t},\overline{x},a^i)\pig|
			\Bigg\}\bbotimes^n_{k=1}\widetilde{\mu}(dx^k).
			\label{1271}
		\end{align}
		We use Assumption \ref{assume:A} and (2) of Lemma \ref{lem estimate of b^i_n,m...} to estimate the following term:
		\begin{align}
			&\h{-10pt}\pig|f(\widetilde{t},x^i,\widetilde{\mu},a^i)
			-f^i_{n,m}(\widetilde{t},\overline{x},a^i)\pig|
			+C_4\pig|b (\widetilde{t},x^i,\widetilde{\mu},a^i)-b^i_{n,m}(\widetilde{t},\overline{x},a^i)\pig|\nonumber\\
			\leq\,&
			\pig|f(\widetilde{t},x^i,\widetilde{\mu},a^i)
			-f (\widetilde{t},x^i,\widehat{\mu}^{n,\overline{x}},a^i)\pig|+\pig|f (\widetilde{t},x^i,\widehat{\mu}^{n,\overline{x}},a^i)
			-f^i_{n,m}(\widetilde{t},\overline{x},a^i)\pig|\nonumber\\
			&+C_4\pig|b(\widetilde{t},x^i,\widetilde{\mu},a^i)
			-b (\widetilde{t},x^i,\widehat{\mu}^{n,\overline{x}},a^i)\pig|
			+C_4\pig|b (\widetilde{t},x^i,\widehat{\mu}^{n,\overline{x}},a^i)
			-b^i_{n,m}(\widetilde{t},\overline{x},a^i)\pig|\nonumber\\
			\leq\,&
			K(1+C_4)\mathcal{W}_1(\widetilde{\mu},\widehat{\mu}^{n,\overline{x}})
   +K(1+C_4)m\int_{\mathbb{R}}\left|\widetilde{t}-\pig[T\wedge(\widetilde{t}-s)^+\pig]\right|^\beta \phi(ms)ds
			\nonumber\\
			&+K(1+C_4)m^{dn}\int_{\mathbb{R}^{dn}}
			\left(|y^i|+\dfrac{1}{n}\sum^n_{j=1}|y^j|\right) \prod^n_{k=1}\Phi(my^k)dy^k.
			\label{1291}
		\end{align}
		Putting \eqref{1291} and \eqref{ineq. int W_2(tilde mu- hat mu^n,x)} into \eqref{1271}, we see that
\begin{align*}
			&\h{-10pt}(u_1-v_{\varepsilon,n,m}-\delta  M_2)(t_0,\mu_0)\nonumber\\
			\leq\,&
			\delta\left[\frac{C_K}{\delta^{(1+\rho)/2}}\pig[ l_0/3+e^T(\lVert u_1\rVert_{\infty} +\ell_2)\pigr]^{(1+\rho)/2} + C_K\right]
			\nonumber\\
   &-\int_{\mathbb{R}^{dn}}
			\dfrac{\varepsilon^2}{2}
			\sum^n_{i=1}\textup{tr}\nabla_{x^ix^i}^2
			\widecheck{\overline{v}}_{\varepsilon,n,m}
			(\widetilde{t},\overline{x})\bbotimes^n_{k=1}\widetilde{\mu}(dx^k)
			+\dfrac{e^{T-t_0}C_dh_nK(1+C_4)}{\delta^{1/2}}\pig[ l_0/3+e^T(\lVert u_1\rVert_{\infty} +\ell_2)\pigr]^{1/2}\nonumber\\
			&+e^{T-t_0}K(1+C_4)m\int_{\mathbb{R}}\left|\widetilde{t}-\pig[T\wedge(\widetilde{t}-s)^+\pig]\right|^\beta \phi(ms)ds\\
			&+\dfrac{2e^{T-t_0}K(1+C_4)m^{dn}}{n}\int_{\mathbb{R}^{dn}}
			\left(\sum^n_{j=1}|y^j|\right) \prod^n_{k=1}\Phi(my^k)dy^k.
		\end{align*}
		Using Lemmas \ref{lem. classical sol. of smooth approx.} and \ref{lem. |v_e,n,m-v_0,n,m| < C_6e}, we first take $\varepsilon \to 0^+$ and then $m \to \infty$ to obtain that
\begin{align*}
			&\h{-10pt}(u_1-\lim_{m\to \infty}v_{0,n,m}-\delta  M_2)(t_0,\mu_0)\nonumber\\
			\leq\,&
		\delta\left[\frac{C_K}{\delta^{(1+\rho)/2}}\pig[ l_0/3+e^T(\lVert u_1\rVert_{\infty} +\ell_2)\pigr]^{(1+\rho)/2} + C_K\right]
			\nonumber\\
			&+\dfrac{e^{T-t_0}C_dh_nK(1+C_4)}{\delta^{1/2}}\pig[ l_0/3+e^T(\lVert u_1\rVert_{\infty} +\ell_2)\pigr]^{1/2}.
		\end{align*}
		By \eqref{def. h_n}, \eqref{ineq. int W_2(tilde mu- hat mu^n,x)}, Lemmas \ref{lem. |v_e-v_0|<C_5 e} and \ref{lem. conv. of v_e,n,m to v_e}, we take $n \to \infty$ and then $\delta \to 0^+$ to obtain that
		\begin{align*}
			(u_1-v_{0})(t_0,\mu_0)
			=\left(u_1-\lim_{n\to \infty}\lim_{m\to \infty}v_{0,n,m}\right)(t_0,\mu_0)
			\leq 0,
		\end{align*}
		which contradicts \eqref{ineq. u_1-v_0(t_0,mu_0)>0}.\\
  \hfill\\
 \noindent{\bf Part 2. Proof of  $u_2\geq v_{0}$:} Following the arguments of the first part of Step II of the proof of \cite[Theorem 5.1]{cosso_master_2022},  we can assume without loss of generality that $u_2(s,\cdot)$ is $\mathcal{W}_1$-Lipschitz continuous for every $s\in [0,T]$; and showing $u_2 \geq v_0$ is equivalent to showing 
\begin{equation}
    u_2(t, \mu) \geq v^s(t, \nu):= \mathbb{E} \left[ \int_t^s f \left( r, X_r^{t, \xi, \mathfrak{a}}, \mathbb{P}_{X_r^{t, \xi, \mathfrak{a}}}^{W^0}, \mathfrak{a} \right) dr \right] + \mathbb{E}u_2 \left( s, \mathbb{P}_{X_s^{t, \xi, \mathfrak{a}}}^{W^0} \right),
    \label{1446}
\end{equation}
for every $(t, \mu) \in [0, T] \times \mathcal{P}_2(\mathbb{R}^d)$, $s \in (t, T]$, $\xi \in L^2(\Omega, \mathcal{F}_t, \mathbb{P}; \mathbb{R}^d)$ with $\mathcal{L}(\xi) = \mu$, and $\mathfrak{a} \in \mathcal{M}_t$, where $\mathcal{M}_t$ denotes the set of $\mathcal{F}_t^t$-measurable random variables $\alpha: \Omega \to A$ and $\nu:=\mathcal{L}(\xi,\mathfrak{a})$. Suppose, for contradiction, that \eqref{1446} does not hold. Then, there exist $t_0 \in [0, T)$, $s_0 \in (t_0, T]$, $\mu_0 \in \mathcal{P}_2(\mathbb{R}^d)$ and $\nu_0 \in \mathcal{P}_2(\mathbb{R}^d \times A)$, with $\mu_0$ being the marginal of $\nu_0$ on $\mathbb{R}^d$, such that
\begin{align}
v^{s_0}(t_0, \nu_0) > u_2(t_0, \mu_0).
\label{1450}
\end{align}
Following the approach outlined at the beginning of Part 1 of this proof, we assume that there exists $q > 2$ such that $\nu_0 \in \mathcal{P}_q(\mathbb{R}^d\times A)$. The function $\widecheck{u}_2(t, \mu) := e^{t-t_0} u_2(t, \mu)$ is a viscosity supersolution of the following equation:
\begin{align*}
			\left\{\begin{aligned}
				&\partial_t u(t,\mu)
				+\int_{\mathbb{R}^d}
				\sup_{a\in A} \Bigg\{\widecheck{f}(t,x,\mu,a)+b(t,x,\mu,a)\cdot \partial_\mu u(t,\mu)(x) \\
				&\h{90pt}+ \dfrac{1}{2}\textup{tr}\Big((\sigma(t,x,a)\big[\sigma(t,x,a)\big]^\top+\sigma^0(t,x)[\sigma^0(t,x)]^\top)\nabla_x\partial_\mu u(t,\mu)(x)\Big)\Bigg\}\mu(dx)\\
				&+ \dfrac{1}{2} \int_{\mathbb{R}^{2d}}\text{tr}\Big[\sigma^0(t,x)[\sigma^0(t,y)]^\top\partial_\mu^2
    u(t,\mu)(x,y)\Big] \, \mu^{\otimes 2}(dx,dy)-u(t,\mu)=0\h{5pt} \text{for $(t,\mu) \in [0,T) \times\mathcal{P}_2(\mathbb{R}^{d})$};\\	&u(T,\mu)=\int_{\mathbb{R}^d}\widecheck{g}(x,\mu)\mu(dx) \h{10pt}\text{for $\mu\in \mathcal{P}_2(\mathbb{R}^d)$},
			\end{aligned}\right.
		\end{align*}
where $\widecheck{f}(t,x,\mu,a):=e^{t-t_0}f(t,x,\mu,a)$ and $\widecheck{g}(x,\mu):=e^{T-t_0}g(x,\mu)$. That is, for any $s_1\in (0,T]$ and $\varphi \in C^{1,2}([0,s_1]\times\mathcal{P}_2(\mathbb{R}^d\times A))$ such that $\widecheck{u}_2-\varphi$ attains a minimum with a value of $0$ at $(t^*,\nu^*) \in [0,s_1)\times \mathcal{P}_2(\mathbb{R}^d\times A)$, then the following inequality holds:
\begin{align}
    0\geq\,&\partial_t \varphi(t^*,\nu^*)+\int_{\mathbb{R}^d\times A}
				 \Bigg\{\widecheck{f}(t^*,x,\mu^*,a)+b(t^*,x,\mu^*,a)\cdot \partial_\mu \varphi(t^*,\nu^*)(x,a) \nonumber\\
				&+ \dfrac{1}{2}\textup{tr}\Big[(\sigma(t^*,x,a)\big[\sigma(t^*,x,a)\big]^\top+\sigma^0(t^*,x)[\sigma^0(t^*,x)]^\top)\nabla_x\partial_\mu \varphi(t^*,\nu^*)(x,a)\Big]\Bigg\}\nu^*(dx,da)\nonumber\\
    &+ \dfrac{1}{2} \int_{\mathbb{R}^d \times A \times \mathbb{R}^d \times A}\text{tr}\Big[\sigma^0(t^*,x)[\sigma^0(t^*,y)]^\top\partial_\mu^2
    \varphi(t^*,\nu^*)(x,a,y,\alpha)\Big] \, (\nu^*)^{\otimes 2}(dx,da,dy,d\alpha)
    -\varphi(t^*,\nu^*),
    \label{2214}
		\end{align}
  where $\mu^*$ is the marginal of $\nu^*$ on $\mathbb{R}^d$, and the operators $\partial_\mu$, $\partial_\mu^2$ are defined as in Definition \ref{vis_def}, through the projection.
  
  We regularize the coefficients with respect to the control variable. Let $\Psi:\mathbb{R}^d \to \mathbb{R}^+$ be a compactly supported smooth function satisfying $\int_{\mathbb{R}^{d}}\Psi(y)dy=1$. We extend \(b\) and \(f\) to the space \([0, T] \times \mathbb{R}^{d}\times\mathcal{P}_2(\mathbb{R}^d)\times \mathbb{R}^d\) by setting $b(t,x,\mu,a)=0$ and $f(t,x,\mu,a)=0$ when $a\in\mathbb{R}^d$ is not in $A$. For simplicity, we continue to denote these extensions by \(b\) and \(f\). We further define the functions \(\widetilde{b}^{\h{.7pt}i}_{n,m}\) and \(\widetilde{f}^{\h{.7pt}i}_{n,m}\) by
\[
\widetilde{b}^{\h{.7pt}i}_{n,m}(t, \overline{x}, a) := m^d \int_{\mathbb{R}^d} b^i_{n,m}(t, \overline{x}, a - a') \Psi(m a') \, da',
\]
\[
\widetilde{f}^{\h{.7pt}i}_{n,m}(t, \overline{x}, a) := m^d \int_{\mathbb{R}^d} f^i_{n,m}(t, \overline{x}, a - a') \Psi(m a') \, da',
\]
for any $n, m \in \mathbb{N}$, $i = 1,2, \ldots, n$, $\overline{x} = (x^1,x^2, \ldots, x^n) \in \mathbb{R}^{dn}$ and $(t, a) \in [0, T] \times A$. Here $b_{n,m}^i$ and $f_{n,m}^i$ are as defined in \eqref{def. approx of b} and \eqref{def. approx of f}. Recalling the compactly supported smooth function $\Phi$ defined in Section \ref{Phi_defined}, we also define 
\begin{equation*}
u_{n,m}(t, \overline{x}) := m^{dn} \int_{\mathbb{R}^{dn}} u_2 \left(t, \frac{1}{n} \sum_{j=1}^{n} \delta_{x^j - y^j} \right) \prod_{j=1}^{n} \Phi(my^j) \, dy^j.
\end{equation*} We now introduce
\begin{align}
\label{v^s_n,m}
v^{s_0}_{n,m}(t, \nu) :=& \frac{1}{n} \sum_{i=1}^{n} \mathbb{E} \bigg[ \int_{t}^{s_0} \widetilde{f}^{\h{.7pt}i}_{n,m} \left( r, \overline{\widetilde{X}}^{1,m,t,\overline{\xi},\overline{\mathfrak{a}}_0}_{r}, \ldots, \overline{\widetilde{X}}^{n,m,t,\overline{\xi},\overline{\mathfrak{a}}_0}_{r}, \mathfrak{a}_0^i \right) dr\nonumber\\
&\h{110pt}+ u_{n,m} \left( s_0, \overline{\widetilde{X}}^{1,m,t,\overline{\xi},\overline{\mathfrak{a}}_0}_{s_0}, \ldots, \overline{\widetilde{X}}^{n,m,t,\overline{\xi},\overline{\mathfrak{a}}_0}_{s_0} \right) \bigg],
\end{align}
for any $t \in [0,s_0]$ and $\nu \in \mathcal{P}_2(\mathbb{R}^d \times A)$, where $\overline{\xi}=(\xi^1,\xi^2,\ldots,\xi^n) \in L^2(\Omega, \mathcal{F}_t, \mathbb{P}; \mathbb{R}^{dn})$, $\overline{\mathfrak{a}}_0=(\mathfrak{a}_0^1,\mathfrak{a}_0^2,\ldots,\mathfrak{a}_0^n) \in (\mathcal{M}_t)^n$ such that $\mathcal{L}(\overline{\xi},\overline{\mathfrak{a}}_0)=\nu \otimes \cdots \otimes \nu$ and $\overline{\widetilde{X}}^{m,t,\overline{\xi},\overline{\mathfrak{a}}_0}_s=\Big(\overline{\widetilde{X}}^{1,m,t,\overline{\xi},\overline{\mathfrak{a}}_0}_s,\ldots,\overline{\widetilde{X}}^{n,m,t,\overline{\xi},\overline{\mathfrak{a}}_0}_s\Big)$ is the solution to \eqref{eq. state perturbed by e BM and smoothing} on $[t,s_0]$ with $\overline{\alpha}=\overline{\mathfrak{a}}_0$, $\varepsilon=0$ and $b$ replaced by $\widetilde{b}^{\h{.7pt}i}_{n,m}$. For every $n, m \in \mathbb{N}$, $(t, \nu) \in [0, s_0] \times \mathcal{P}_2(\mathbb{R}^d\times A)$, we define $\widecheck{v}_{n,m}^{s_0}:=e^{t-t_0} v_{n,m}^{s_0}$ and also similarly define  $\widecheck{\widetilde{f}^{\h{.7pt}i}}_{n,m}$ and $\widecheck{u}_{n,m}$ from $\widetilde{f}_{n,m}^i$ and $u_{n,m}$, respectively. Let
\begin{align*}
\overline{v}_{n,m}^{s_0}(t,\overline{x},\overline{a})
:=\widetilde{v}_{n,m}^{s_0}
(t,\delta_{(x^1,a^1)}\otimes\cdots\otimes\delta_{(x^n,a^n)})
		\end{align*}
 for any $\overline{x}=\left(x^1, \ldots, x^n\right) \in \mathbb{R}^{dn}$ and $\overline{a}=\left(a^1, \ldots, a^n\right) \in A^n$, where
\begin{align}
		\widetilde{v}_{n,m}^{s_0}(t,\overline{\nu})
		:=& \dfrac{1}{n}\sum^n_{i=1}\mathbb{E}\Bigg[\int_t^{s_0} \widetilde{f}^{\h{.7pt}i}_{n,m}\left(s,\overline{\widetilde{X}}^{1,m,t,\overline{\xi},\overline{\mathfrak{a}}_0}_s,\ldots,
		\overline{\widetilde{X}}^{n,m,t,\overline{\xi},\overline{\mathfrak{a}}_0}_s,
		\mathfrak{a}_0^i\right)ds\nonumber \\
		&\h{110pt}+ u_{n,m}\left(\overline{\widetilde{X}}^{1,m,t,\overline{\xi},\overline{\mathfrak{a}}_0}_{s_0},\ldots,
		\overline{\widetilde{X}}^{n,m,t,\overline{\xi},\overline{\mathfrak{a}}_0}_{s_0}\right)\Bigg],
		\label{def. tilde v_e,n,m, with fixed control}
	\end{align}
 for any $t\in[0,s_0]$ and $\overline{\nu} \in \mathcal{P}_2\left(\mathbb{R}^{dn} \times A^n\right)$, where $\overline{\xi}$ and $\overline{\mathfrak{a}}_0$ satisfy $\mathcal{L}(\overline{\xi},\overline{\mathfrak{a}}_0)=\overline{\nu}$. Moreover, by \cite[Theorem A.8]{cosso_master_2022}, we deduce that $\widecheck{v}_{n,m}^{s_0}$ 
can be represented by 
\begin{align*}
\widecheck{v}_{n,m}^{s_0}(t,\nu)
=e^{t-t_0}\int_{\mathbb{R}^{dn}\times A^n}
\overline{v}_{n,m}^{s_0}(t,\overline{x},\overline{a})\nu(dx^1,da^1)\otimes\cdots\otimes
\nu(dx^n,da^n),
\end{align*}
and it could be shown by following the proofs of Lemma \ref{lem. classical sol. of smooth approx.}, Theorem \ref{thm v_e,n,m} and \cite[Theorem A.8]{cosso_master_2022} that 
\begin{enumerate}[(1).]
    \item $\overline{v}_{n, m}^{s_0} \in C^{1,2}\left(\left[0, s_0\right] \times (\mathbb{R}^{dn} \times A^n) \right)$ and $v_{n, m}^{s_0} \in C^{1,2}\left(\left[0, s_0\right] \times \mathcal{P}_2\left(\mathbb{R}^d \times A\right)\right)$;
    \item for any $i=1, \ldots, n$ and $(t,\overline{x},\overline{a}) \in\left[0, s_0\right] \times\mathbb{R}^{dn} \times A^n$, it holds that
$$
\left|\nabla_{x^i} \overline{v}_{n, m}^{s_0}(t,\overline{x},\overline{a})\right| \leq \frac{C_K}{n},
$$
where the constant $C_K \geq 0$ depends on $d$, $K$, $T$, but independent of $n, m$;
\item if $t \in [0,s_0]$ and $\nu \in \mathcal{P}_q\left(\mathbb{R}^d \times A\right)$ for some $q>2$,
then
\begin{align}
\label{v^s_n,m conv}
\lim _{n \rightarrow+\infty} \lim _{m \rightarrow+\infty} v_{n, m}^{s_0}(t, \nu)=v^{s_0}(t, \nu);
\end{align}
\item the function $v_{n,m}^{s_0}(t,\nu)$ solves the following equation classically:
\[
\left\{
\begin{aligned}
    &\partial_t u(t, \nu) + \overline{\mathbb{E}} \Bigg[ \sum_{i=1}^n \bigg\{ \frac{1}{n} \widetilde{f}^{\h{.7pt}i}_{n,m}(t, \overline{\xi}, \mathfrak{a}_0^i) 
    + \Big\langle\widetilde{b}_{n,m}^i (t, \overline{\xi}, \mathfrak{a}_0^i), \nabla_{x^i} \overline{v}_{n,m}^{s_0}(t, \overline{\xi}, \overline{\mathfrak{a}}_0)\Big\rangle  \\
    &\h{90pt} + \frac{1}{2} \text{tr} \left(\left[ (\sigma \sigma^\top) (t, \xi^i, \mathfrak{a}_0^i)+(\sigma^0\sigma^{0;\top})(t,\xi^i)\right]  \nabla_{x^ix^i}^2 \overline{v}_{n,m}^{s_0}(t, \overline{\xi}, \overline{\mathfrak{a}}_0)\right) \bigg\}\\
    &\h{60pt}+\dfrac{1}{2}
				\sum^n_{i,j=1,i\neq j}\textup{tr}\Big[\sigma^0(t,\xi^i)\sigma^{0;\top}(t,\xi^j)\nabla_{x^ix^j}^2 \overline{v}_{n,m}^{s_0}(t, \overline{\xi}, \overline{\mathfrak{a}}_0)\Big]\Bigg] =0; \\
    &u(s_0, \nu) = \overline{\mathbb{E}}  [u_{n,m}(s_0, \overline{\xi})],
\end{aligned}
\right.
\]
for any $t \in [0,s_0)$ and $\nu \in \mathcal{P}_2(\mathbb{R}^d \times A)$, where $\overline{\xi}=(\xi^1,\xi^2,\ldots,\xi^n) \in L^2(\Omega, \mathcal{F}_t, \mathbb{P}; \mathbb{R}^{dn})$, $\overline{\mathfrak{a}}_0=(\mathfrak{a}_0^1,\mathfrak{a}_0^2,\ldots,\mathfrak{a}_0^n) \in (\mathcal{M}_t)^n$ such that $\mathcal{L}(\overline{\xi},\overline{\mathfrak{a}}_0)=\nu \otimes \ldots \otimes \nu$.
\end{enumerate}
We now return to the hypothesis \eqref{1450}, with $\mu_0$ and $\nu_0$ mentioned therein. Let $l_0:=v^{s_0}(t_0,\nu_0)-u_2(t_0,\mu_0)>0$. For small enough $\delta>0$ depending on $M_{2}(\nu_0)$ and $l_0$ only, we have $v^{s_0}(t_0,\nu_0)-u_2(t_0,\mu_0)-\delta M_{2}(\nu_0)\geq l_0/2$ and thus $v^{s_0}_{n,m}(t_0,\nu_0)-u_2(t_0,\mu_0)-\delta M_{2}(\nu_0)\geq l_0/3$, where $v^{s_0}_{n,m}$ is defined in \eqref{v^s_n,m}, and this holds for large enough $n$, $m\in \mathbb{N}$ depending on $\delta$, $M_{2}(\nu_0)$ and $l_0$ only, by \eqref{v^s_n,m conv}. Thus, it holds that 
\begin{equation}\label{eq:cpt1}
\displaystyle\sup_{(t,\nu)\in [0,T] \times \mathcal{P}_2(\mathbb{R}^d\times A)} \widecheck{v}^{s_0}_{n,m}(t,\nu)-\widecheck{u}_2(t,\mu)-\delta M_{2}(\nu)\geq l_0/3,
\end{equation}
where $\mu$ is the marginal of $\nu$ on $\mathbb{R}^d$, for large enough $n$, $m\in \mathbb{N}$ depending on $\delta$, $M_{2}(\nu_0)$ and $l_0$ only. By the compactness of \begin{align*}
\widetilde{U}^2_{\delta, n,m}:=\pig\{(t,\nu) \in [0,T]\times\mathcal{P}_2(\mathbb{R}^d \times A):\widecheck{v}^{s_0}_{n,m}(t,\nu)-\widecheck{u}_2(t,\mu)-\delta M_{2}(\nu) \geq l_0/3\pig\},
\end{align*}
and the upper semicontinuity of $\widecheck{v}^{s_0}_{n,m}-\widecheck{u}_2-\delta M_{2}$, we argue as in Part 1A to deduce that there exists $(\widetilde{t},\widetilde{\nu}) \in [0,T] \times \widetilde{U}^2_{\delta, n,m}$ such that the maximum of $\widecheck{v}^{s_0}_{n,m}-\widecheck{u}_2-\delta M_{2}$ is attained at $(\widetilde{t},\widetilde{\nu})$. We note that this maximum point depends on $\delta, n,m$. If $\widetilde{t} = s_0=T$, then we proceed as in Step 1B to get a contradiction. If $\widetilde{t} = s_0<T$, then \begin{align*}
	(v^{s_0}_{n,m}-u_2-\delta  M_2)(t_0,\nu_0) = (\widecheck{v}^{s_0}_{n,m}-\widecheck{u}_2-\delta  M_2)(t_0,\nu_0)
			&\leq  \widecheck{v}^{s_0}_{n,m}(s_0,\widetilde{\nu})-\widecheck{u}_2(s_0,\widetilde{\nu}) 
			-\delta  M_2(\widetilde{\nu})\nonumber\\
   &\leq \widecheck{v}^{s_0}_{n,m}(s_0,\widetilde{\nu})-\widecheck{u}_2(s_0,\widetilde{\nu})\to 0,
\end{align*}
as $m \to \infty$ then $n\to \infty$, by using the definition of $\widecheck{v}_{n,m}^{s_0}:=e^{t-t_0} v_{n,m}^{s_0}$ and that of $v_{n,m}^{s_0}$ in \eqref{v^s_n,m}. If $\widetilde{t}<s_0$, we apply Definition \ref{def. of vis sol} of supersolution and put $\varphi=\widecheck{v}_{n,m}^{s_0}-\delta M_2-M_*$ in \eqref{2214}, with $M_* \in \mathbb{R}$ such that $ \widecheck{v}_{n,m}^{s_0}-\widecheck{u}_2 -\delta M_2-M_*$ attains the maximum with a value of $0$ at $(\widetilde{t},\widetilde{\nu})$. Then we proceed as in Step 1C and utilize items (1)-(4) in the above to draw a contradiction and conclude the proof.
\end{proof}

\appendix
\section{Technical Proofs in Section \ref{approximation}}
\label{Appendix_proof_Section_3}
\begin{proof}[\textup{\bf Proof of Lemma \ref{lem estimate of b^i_n,m...}: }]
The bounds of $f^i_{n,m}$ and $g^i_{n,m}$ in assertion (1) are obvious by the definitions in \eqref{def. approx of b}-\eqref{def. approx of g} and Assumption \ref{assume:A}. For the bound of $b^i_{n,m}$ in assertion (1), we have
\begin{align*}
|b^i_{n,m}(t,\overline{x},a)|&\leq Km^{dn}\int_{\mathbb{R}^{dn}}
[1+|x^i|^\rho+|y^i|^\rho]\prod^n_{k=1}\Phi(my^k)dy^k\\
&\leq K\left(1+|x^i|^\rho+m^{-\rho}\int_{\mathbb{R}^{d}}
|y^i|^\rho\Phi(y^i)dy^i\right)\\
&\leq  K\pig(1+C_{\Phi,\rho}m^{-\rho}+|x^i|^\rho\pig).
\end{align*}

We prove assertion (2) for $g$ by considering
\begin{align*}
\pig|g(x^i, \widehat{\mu}^{n,\overline{x}}) - g^i_{n,m} (\overline{x})\pig|
\leq m^{dn} \int_{\mathbb{R}^{dn}} \bigg| g(x^i, \widehat{\mu}^{n,\overline{x}}) 
- g \bigg( x^i -y^i, \frac{1}{n} \sum_{j=1}^{n} \delta_{x^j-y^j} \bigg) \bigg| \prod^n_{k=1}\Phi(my^k) dy^k.
\end{align*}
Using the Lipschitz continuity of $g$ in Assumption \ref{assume:A} and the fact that
\begin{align}
\mathcal{W}_1 \bigg( \widehat{\mu}^{n,\overline{x}},  \frac{1}{n} \sum_{j=1}^{n} \delta_{x^j-y^j} \bigg) 
&= \mathcal{W}_1  \bigg( \frac{1}{n} \sum_{j=1}^{n} \delta_{x^j}, \frac{1}{n} \sum_{j=1}^{n} \delta_{x^j - y^j} \bigg) \nonumber\\
&\leq \int_{\mathbb{R}^d\times \mathbb{R}^d} |x-y|
\left[\frac{1}{n} \sum_{j=1}^{n} \delta_{(x^j,x^j - y^j)}(dx,dy)\right] \nonumber\\
&= \frac{1}{n} \sum_{j=1}^{n} |y^j|,
\label{1701}
\end{align}
we obtain that
\begin{align*}
\pig|g(x^i, \widehat{\mu}^{n,\overline{x}}) - g^i_{n,m} (\overline{x})\pig|
\leq K m^{dn} \int_{\mathbb{R}^{dn}} \bigg( |y^i| + \frac{1}{n} \sum_{j=1}^{n} |y^j| \bigg) \prod_{k=1}^{n} \Phi(my^k) dy^k.
\end{align*}
We prove assertion (2) for $b$ by considering (the proof for $f$ is exactly the same)
\begin{align*}
&|b(t, x^i, \widehat{\mu}^{n,\overline{x}}, a) - b_{n,m}^i (t, \overline{x}, a)| \\
&\leq m^{dn+1} \int_{\mathbb{R}^{dn+1}} \left| b(t, x^i, \widehat{\mu}^{n,\overline{x}}, a) - b \bigg( T \wedge (t-s)^+, x^i - y^i, \frac{1}{n} \sum_{j=1}^{n} \delta_{x^j - y^j}, a \bigg) \right| \phi(ms) \prod_{k=1}^{n} \Phi(my^k) dy^k ds\\
&\leq m \int_{\mathbb{R}} \left| b(t, x^i, \widehat{\mu}^{n,\overline{x}}, a) 
- b \left( T \wedge (t-s)^+, x^i, \widehat{\mu}^{n,\overline{x}}, a \right) \right| \phi(ms) ds \\
&\h{10pt}+ m^{dn+1} \int_{\mathbb{R}^{dn+1}} \left| b \left( T \wedge (t-s)^+, x^i, \widehat{\mu}^{n,\overline{x}}, a \right) - b \bigg( T \wedge (t-s)^+, x^i - y^i, \frac{1}{n} \sum_{j=1}^{n} \delta_{x^j - y^j}, a \bigg) \right|\cdot\\
&\h{335pt}\phi(ms) \prod_{j=1}^{n} \Phi(my^j) dy^j ds.
\end{align*}
The inequality in \eqref{1701} and Assumption \ref{assume:A} imply that
\begin{align*}
&|b(t, x^i, \widehat{\mu}^{n,\overline{x}}, a) - b_{n,m}^i (t, \overline{x}, a)| \\
&\leq Km \int_{\mathbb{R}} \left|t- (T \wedge (t-s)^+) ) \right|^\beta \phi(ms) ds 
+ K m^{dn} \int_{\mathbb{R}^{dn}} \bigg( |y^i| + \frac{1}{n} \sum_{j=1}^{n} |y^j| \bigg) \prod_{k=1}^{n} \Phi(my^k) dy^k.
\end{align*}
For the proof of assertion (3) for $g$ (the proofs for $f$ and $b$ are exactly the same), we let $\overline{x}$, $\overline{z} \in \mathbb{R}^{dn}$ and estimate
\begin{align*}
&|g_{n,m}^i (\overline{x}) - g_{n,m}^i (\overline{z})| \\
&\leq m^{dn} \int_{\mathbb{R}^{dn}} \bigg| g \bigg( x^i-y^i, \frac{1}{n} \sum_{j=1}^{n} \delta_{x^j-y^j} \bigg) 
- g \bigg( z^i-y^i, \frac{1}{n} \sum_{j=1}^{n} \delta_{z^j-y^j} \bigg) \bigg|  \prod_{k=1}^{n} \Phi(my^k) dy^k.
\end{align*}
Then the inequality in \eqref{1701}  yields that
\begin{align*}
|g_{n,m}^i (\overline{x}) - g_{n,m}^i (\overline{z})| 
&\leq K m^{dn} \int_{\mathbb{R}^{dn}}\bigg[ |x^i - z^i| + \frac{1}{n} \sum_{j=1}^{n} |x^j - z^j| \bigg]\prod_{k=1}^{n} \Phi(my^k) dy^k\\
&= K \bigg[ |x^i - z^i| + \frac{1}{n} \sum_{j=1}^{n} |x^j - z^j| \bigg].
\end{align*}
Finally, assertion (4) follows immediately from assertion (2). 
\end{proof}
\begin{proof}[\textup{\bf Proof of Lemma \ref{finite_derivative_estimate}: }]
    \noindent{\bf Step 1. Lipschitz continuity of $\overline{v}_{\varepsilon,n,m}(t,\overline{x})$ in $\overline{x}$:} 
    Note that the identity \cite[(A.21)]{cosso_master_2022} may contain potential typographical errors, so we reproduce the proof of the Lipschitz continuity property here for the case involving common noise. We aim to establish that
\begin{align*}
\left| \overline{v}_{\varepsilon,n,m}(t, \overline{x}) - \overline{v}_{\varepsilon,n,m}(t, \overline{z}) \right| \leq \frac{C_4}{n} \left| \overline{x} - \overline{z} \right|,
\end{align*}
when the components of $\overline{x} = (x^1, \ldots, x^n)$ and $\overline{z} = (z^1, \ldots, z^n)$ are all equal, apart from one component $x^k \neq z^k$ for some $k=1,2,\ldots,n$. Recalling the definition in \eqref{def. tilde v_e,n,m}, we use the continuity in (3) of Lemma \ref{lem estimate of b^i_n,m...} to yield that
\begin{align}
&\left| \overline{v}_{\varepsilon,n,m}(t, \overline{x}) - \overline{v}_{\varepsilon,n,m}(t, \overline{z}) \right| \nonumber\\
&\leq 2K \sup_{\overline{\alpha} \in \overline{\mathcal{A}}^n_t} \frac{1}{n} \sum_{i=1}^{n} \mathbb{E} \left[ \int_{t}^{T} \left| \overline{X}_{s}^{i,m,\varepsilon,t,\overline{x},\overline{\alpha}} - \overline{X}_{s}^{i,m,\varepsilon,t,\overline{z},\overline{\alpha}} \right| ds + \left| \overline{X}_{T}^{i,m,\varepsilon,t,\overline{x},\overline{\alpha}} - \overline{X}_{T}^{i,m,\varepsilon,t,\overline{z},\overline{\alpha}} \right| \right].
\label{1897}
\end{align}
Suppose that $\overline{x}$ and $\overline{z}$ differ only for the first component $x^1 \neq z^1$. For $i = 1,2, \ldots, n$, the $\mathbb R^d$-valued process $\overline{X}^i := (\overline{X}_{s}^{i,m,\varepsilon,t,\overline{x},\overline{\alpha}})_{s\in[t,T]} $ solves the following equation on $[t, T]$:

\[
\overline{X}_{s}^i = x^i 
+ \int_{t}^{s} b_{n,m}^i (r, \overline{X}_{r}^1, \ldots, \overline{X}_{r}^n, \overline{\alpha}_{r}^i) dr 
+ \int_{t}^{s} \sigma (r, \overline{X}_{r}^i, \overline{\alpha}_{r}^i) d\overline{W}^i_{r} 
+ \int_{t}^{s} \sigma^0 (r,\overline{X}_{r}^i) d\overline{W}^0_{r}
+ \varepsilon (\overline{B}^i_{s} - \overline{B}^i_{t}).
\]
As the coefficients of the above equation are regular enough and have bounded continuous derivatives by Lemma \ref{lem estimate of b^i_n,m...}, the process $\Delta \overline{X}_{s}^i:=\overline{X}_{s}^{i,m,\varepsilon,t,\overline{z},\overline{\alpha}}-\overline{X}_{s}^{i,m,\varepsilon,t,\overline{x},\overline{\alpha}}$ satisfies
\begin{align*}
\Delta \overline{X}_{s}^i =\,&(z^1-x^1)\delta_{1i} 
+ \int_{t}^{s} \sum_{j=1}^{n} \widetilde{b}_{n,m,r}^{i,j}
\Delta \overline{X}_{r}^j dr
+ \int_{t}^{s}\sum_{k=1}^{d}\big(\Delta \overline{X}_{r}^i\big)_k\widetilde{\sigma}_{r}^{i,k}  d\overline{W}^i_{r},\\
&+ \int_{t}^{s}\sum_{k=1}^{d}\big(\Delta \overline{X}_{r}^i\big)_k\widetilde{\sigma}_{r}^{0,i,k} d\overline{W}^0_{r},
\end{align*}
where $\widetilde{b}_{n,m,r}^{i,j}:=\int^1_0\nabla_{x^j} b_{n,m}^i (r, \overline{X}_{r}^1+\theta \Delta \overline{X}_{r}^1, \ldots, \overline{X}_{r}^n+\theta \Delta \overline{X}_{r}^n, \overline{\alpha}_{r}^i) d\theta $, $\widetilde{\sigma}_{r}^{i,k}:=\int^1_0\p_{x_k} \sigma (r, \overline{X}_{r}^i+\theta \Delta \overline{X}_{r}^i,
\overline{\alpha}_{r}^i) d\theta $ 
and $\widetilde{\sigma}_{r}^{0,i,k}:=\int^1_0\p_{x_k} \sigma^0 (r, \overline{X}_{r}^i+\theta \Delta \overline{X}_{r}^i) d\theta $. The $\mathbb{R}^{dn}$-valued continuous process $\Delta \overline{X}_{s} := (\Delta \overline{X}_{s}^1, \ldots, \Delta \overline{X}_{s}^n)^\top$ is the unique solution to the above system of linear stochastic equations such that $\mathbb{E}\left[\sup_{s\in [t,T]}|\Delta \overline{X}_{s} |^2\right]<\infty$. Next, we provide an estimate of $\sup_{s\in [t,T]}\mathbb{E}\pig[\sum_{i=1}^n|\Delta \overline{X}_{s}^i |\pig]$ with a method akin to the proof of Tanaka's formula. Letting $\vartheta >0$, we consider the function $u_\vartheta:\mathbb{R}^d \to \mathbb{R}$ defined by 
\begin{align*}
    u_\vartheta(y):=\sqrt{|y|^2+\vartheta^2}.
\end{align*}
Direct calculation gives 
\begin{align*}
    \nabla_y u_\vartheta(y) = \frac{y}{u_\vartheta(y)},\quad \nabla_{yy}^2 u_\vartheta(x) = \frac{1}{u_\vartheta(y)}I_d - \frac{1}{\big[u_\vartheta(y)\big]^3}yy^\top.
\end{align*}
Applying It\^o's formula to $u_\vartheta\left(\Delta \overline{X}_{s}^i \right)$ gives 
\begin{align*}
&\h{-10pt}du_\vartheta\left(\Delta \overline{X}_{s}^i \right)\\
=\,& \pig\langle \nabla_y u_\vartheta(\Delta \overline{X}_{s}^i ), d\Delta \overline{X}_{s}^i \pig\rangle 
+ \frac{1}{2}\tr\left\{\left[\sum_{k=1}^{d}\big(\Delta \overline{X}_{s}^i\big)_k\widetilde{\sigma}_{s}^{i,k} \right]^\top \nabla_{yy}^2 u_\vartheta\left(\Delta \overline{X}_{s}^i \right)
\left[\sum_{k=1}^{d}\big(\Delta \overline{X}_{s}^i\big)_k\widetilde{\sigma}_{s}^{i,k} \right]\right\}ds\\
&+ \frac{1}{2}\tr\left\{\left[\sum_{k=1}^{d}\big(\Delta \overline{X}_{s}^i\big)_k\widetilde{\sigma}_{s}^{0,i,k} \right]^\top \nabla_{yy}^2 u_\vartheta\left(\Delta \overline{X}_{s}^i \right)
\left[\sum_{k=1}^{d}\big(\Delta \overline{X}_{s}^i\big)_k\widetilde{\sigma}_{s}^{0,i,k} \right]\right\}ds\\
=\,&\Bigg\langle \frac{\Delta \overline{X}_{s}^i }{u_\vartheta(\Delta \overline{X}_{s}^i )},\sum_{j=1}^{n} \widetilde{b}_{n,m,s}^{i,j}
\Delta \overline{X}_{s}^j ds
+ \left[\sum_{k=1}^{d}\big(\Delta \overline{X}_{s}^i\big)_k\widetilde{\sigma}_{s}^{i,k} \right]d\overline{W}^i_{s}
+ \left[\sum_{k=1}^{d}\big(\Delta \overline{X}_{s}^i\big)_k\widetilde{\sigma}_{s}^{0,i,k} \right] d\overline{W}^0_{s}\Bigg\rangle\\
&+\frac{1}{2}\tr\left\{\left[\sum_{k=1}^{d}\big(\Delta \overline{X}_{s}^i\big)_k\widetilde{\sigma}_{s}^{i,k} \right]^\top 
\left(\frac{1}{u_\vartheta(\Delta \overline{X}_{s}^i )}I_d - \frac{\pig(\Delta \overline{X}_{s}^i\pig) 
\pig(\Delta \overline{X}_{s}^i\pigr)^\top}{\pig[u_\vartheta(\Delta \overline{X}_{s}^i)\pigr]^3}\right)
\left[\sum_{k=1}^{d}\big(\Delta \overline{X}_{s}^i\big)_k\widetilde{\sigma}_{s}^{i,k} \right]\right\}ds\\
&+\frac{1}{2}\tr\left\{\left[\sum_{k=1}^{d}\big(\Delta \overline{X}_{s}^i\big)_k\widetilde{\sigma}_{s}^{0,i,k} \right]^\top 
\left(\frac{1}{u_\vartheta(\Delta \overline{X}_{s}^i)}I_d - \frac{\pig(\Delta \overline{X}_{s}^i\pig) 
\pig(\Delta \overline{X}_{s}^i\pigr)^\top}{\pig[u_\vartheta(\Delta \overline{X}_{s}^i)\pigr]^3}\right)
\left[\sum_{k=1}^{d}\big(\Delta \overline{X}_{s}^i\big)_k\widetilde{\sigma}_{s}^{0,i,k} \right]\right\}ds.
\end{align*}
It is clear that
\begin{align*}
\frac{\Delta \overline{X}_{s}^i}{u_\vartheta(\Delta \overline{X}_{s}^i)}\longrightarrow \frac{\Delta \overline{X}_{s}^i}{\pig| \Delta \overline{X}_{s}^i\pigr|}\mathds{1}_{\big\{\Delta \overline{X}_{s}^i\neq 0\big\}},\quad\mathbb{P}\text{-a.s. as $\vartheta\to 0$.} 
\end{align*}
Moreover,
\begin{align*}
    &\h{-10pt}\frac{1}{2}\tr\left\{\left[\sum_{k=1}^{d}\big(\Delta \overline{X}_{s}^i\big)_k\widetilde{\sigma}_{s}^{i,k} \right]^\top \left(\frac{1}{u_\vartheta(\Delta \overline{X}_{s}^i)}I_d\right)
    \left[\sum_{k=1}^{d}\big(\Delta \overline{X}_{s}^i\big)_k\widetilde{\sigma}_{s}^{i,k} \right]\right\}\\
    \longrightarrow&\frac{1}{2}\left(\frac{1}{\pig| \Delta \overline{X}_{s}^i\pigr|}\mathds{1}_{\big\{\Delta \overline{X}_{s}^i\neq 0\big\}}\right)
    \tr\left\{\left[\sum_{k=1}^{d}\big(\Delta \overline{X}_{s}^i\big)_k\widetilde{\sigma}_{s}^{i,k} \right]^\top 
    \left[\sum_{k=1}^{d}\big(\Delta \overline{X}_{s}^i\big)_k\widetilde{\sigma}_{s}^{i,k} \right]
    \right\}
\end{align*}
$\mathbb{P}$-a.s. as $\vartheta\to 0$. Similarly,
\begin{align*}
    &\h{-10pt}\frac{1}{2}\tr\left\{\left[\sum_{k=1}^{d}\big(\Delta \overline{X}_{s}^i\big)_k\widetilde{\sigma}_{s}^{i,k} \right]^\top 
    \left( \frac{\Delta \overline{X}_{s}^i \pig(\Delta \overline{X}_{s}^i\pigr)^\top}{\pig[u_\vartheta(\Delta \overline{X}_{s}^i)\pigr]^3}\right)
    \left[\sum_{k=1}^{d}\big(\Delta \overline{X}_{s}^i\big)_k\widetilde{\sigma}_{s}^{i,k} \right]\right\}\\
    \longrightarrow &\frac{1}{2}\left(\frac{1}{\pig| \Delta \overline{X}_{s}^i\pigr|^3}\mathds{1}_{\big\{\Delta \overline{X}_{s}^i\neq 0\big\}}\right)
    \tr\left\{\left[\sum_{k=1}^{d}\big(\Delta \overline{X}_{s}^i\big)_k\widetilde{\sigma}_{s}^{i,k} \right]^\top  \left(\Delta \overline{X}_{s}^i \pig(\Delta \overline{X}_{s}^i\pigr)^\top\right)
    \left[\sum_{k=1}^{d}\big(\Delta \overline{X}_{s}^i\big)_k\widetilde{\sigma}_{s}^{i,k} \right]\right\}
\end{align*}
$\mathbb{P}$-a.s. as $\vartheta\to 0$. The terms involving $\widetilde{\sigma}_{s}^{0,i,k}$ exhibit similar convergences as established in the preceding two results. Therefore, by taking the expectation and applying the dominated convergence theorem as $\vartheta \to 0$, we conclude that
\begin{align}
\label{dX_before_exp}
    &\h{-10pt}\mathbb{E}\pig[|\Delta \overline{X}_{s}^i|\pig]
    -|z^1-x^1|\delta_{1i} \nonumber\\  
    =&\mathbb{E}\left[\int^s_t\left\langle \frac{\Delta \overline{X}_{r}^i}{\pig|\Delta \overline{X}_{r}^i\pigr|}
    ,\sum_{j=1}^{n} \widetilde{b}_{n,m,r}^{i,j}
\Delta \overline{X}_{r}^j  \right\rangle 
    \mathds{1}_{\big\{\Delta \overline{X}_{r}^i\neq 0\big\}}dr\right]\nonumber\\
&+\frac{1}{2}\mathbb{E}\left\{\int^s_t\left(\frac{1}{\pig| \Delta \overline{X}_{r}^i\pigr|}\mathds{1}_{\big\{\Delta \overline{X}_{r}^i\neq 0\big\}}\right)
    \tr\left\{\left[\sum_{k=1}^{d}\big(\Delta \overline{X}_{r}^i\big)_k\widetilde{\sigma}_{r}^{i,k} \right]^\top 
    \left[\sum_{k=1}^{d}\big(\Delta \overline{X}_{r}^i\big)_k\widetilde{\sigma}_{r}^{i,k} \right]
    \right\}dr\right\}\nonumber\\
&-\frac{1}{2}\mathbb{E}\Bigg\{\int^s_t\left(\frac{1}{\pig| \Delta \overline{X}_{r}^i\pigr|^3}\mathds{1}_{\big\{\Delta \overline{X}_{r}^i\neq 0\big\}}\right)
    \tr\left\{\left[\sum_{k=1}^{d}\big(\Delta \overline{X}_{r}^i\big)_k\widetilde{\sigma}_{r}^{i,k} \right]^\top  \left(\Delta \overline{X}_{r}^i \pig(\Delta \overline{X}_{r}^i\pigr)^\top\right)
    \left[\sum_{k=1}^{d}\big(\Delta \overline{X}_{r}^i\big)_k\widetilde{\sigma}_{r}^{i,k} \right]\right\}dr\Bigg\}\nonumber\\
&+\frac{1}{2}\mathbb{E}\left\{\int^s_t\left(\frac{1}{\pig| \Delta \overline{X}_{r}^i\pigr|}\mathds{1}_{\big\{\Delta \overline{X}_{r}^i\neq 0\big\}}\right)
    \tr\left\{\left[\sum_{k=1}^{d}\big(\Delta \overline{X}_{r}^i\big)_k\widetilde{\sigma}_{r}^{0,i,k} \right]^\top 
    \left[\sum_{k=1}^{d}\big(\Delta \overline{X}_{r}^i\big)_k\widetilde{\sigma}_{r}^{0,i,k} \right]
    \right\}dr\right\}\nonumber\\
&-\frac{1}{2}\mathbb{E}\Bigg\{\int^s_t\left(\frac{1}{\pig| \Delta \overline{X}_{r}^i\pigr|^3}\mathds{1}_{\big\{\Delta \overline{X}_{r}^i\neq 0\big\}}\right)
    \tr\left\{\left[\sum_{k=1}^{d}\big(\Delta \overline{X}_{r}^i\big)_k\widetilde{\sigma}_{r}^{0,i,k} \right]^\top  \left(\Delta \overline{X}_{r}^i \pig(\Delta \overline{X}_{r}^i\pigr)^\top\right)
    \left[\sum_{k=1}^{d}\big(\Delta \overline{X}_{r}^i\big)_k\widetilde{\sigma}_{r}^{0,i,k} \right]\right\}dr\Bigg\}.
\end{align}
The term in the third line of \eqref{dX_before_exp} can be estimated by the Cauchy–Schwarz inequality:
\begin{align*}
    &\h{-10pt}\left(\frac{1}{\pig| \Delta \overline{X}_{r}^i\pigr|}\mathds{1}_{\big\{\Delta \overline{X}_{r}^i\neq 0\big\}}\right)
    \tr\left\{\left[\sum_{k=1}^{d}\big(\Delta \overline{X}_{r}^i\big)_k\widetilde{\sigma}_{r}^{i,k} \right]^\top 
    \left[\sum_{k=1}^{d}\big(\Delta \overline{X}_{r}^i\big)_k\widetilde{\sigma}_{r}^{i,k} \right]
    \right\}\nonumber\\
    =\,& \frac{1}{\pig| \Delta \overline{X}_{r}^i\pigr|}
    \sum_{p,q=1}^d\left|\sum_{k=1}^d\pig( \Delta \overline{X}_{r}^i\pigr)_k
    \pig(\widetilde{\sigma}_{r}^{i,k}\pigr)_{pq}\right|^2
    \mathds{1}_{\big\{\Delta \overline{X}_{r}^i\neq 0\big\}}\\
    \leq\,& K^2\pig| \Delta \overline{X}_{r}^i\pigr|.
\end{align*}
Similarly, the term in the forth line of \eqref{dX_before_exp} can be estimated by
\begin{align*}
    & \h{-10pt}\left(\frac{1}{\pig| \Delta \overline{X}_{r}^i\pigr|^3}\mathds{1}_{\big\{\Delta \overline{X}_{r}^i\neq 0\big\}}\right)
    \tr\left\{\left[\sum_{k=1}^{d}\big(\Delta \overline{X}_{r}^i\big)_k\widetilde{\sigma}_{r}^{i,k} \right]^\top  \left(\Delta \overline{X}_{r}^i \pig(\Delta \overline{X}_{r}^i\pigr)^\top\right)
    \left[\sum_{k=1}^{d}\big(\Delta \overline{X}_{r}^i\big)_k\widetilde{\sigma}_{r}^{i,k} \right]\right\}\\
    \leq &\frac{1}{\pig| \Delta \overline{X}_{r}^i\pigr|^3} \pig| \Delta \overline{X}_{r}^i\pigr|^4
    \sum_{k=1}^{d}\left|\widetilde{\sigma}_{r}^{i,k}\right|^2
\mathds{1}_{\big\{\Delta \overline{X}_{r}^i\neq 0\big\}}\\
    \leq &  K^2 \pig| \partial_{x^1_k} \overline{X}_{r}^i\pigr|.
\end{align*}
The terms involving $\widetilde{\sigma}_{s}^{0,i,k}$ can be estimated in a manner similar to the preceding results. Therefore, \eqref{dX_before_exp} reduces to
\begin{align*}
  \mathbb{E}\pig[|\Delta \overline{X}_{s}^i|\pig]
    &\leq |z^1-x^1|\delta_{1i}
    +\mathbb{E}\left[\int^s_t\sum_{j=1}^n\pig| \widetilde{b}_{n,m,r}^{i,j}
\Delta \overline{X}_{r}^j\pig| 
    +2K^2\pig|\Delta \overline{X}_{r}^i\pigr|dr\right].
\end{align*}
Summing over $i=1,2,\ldots,n$, we have
\begin{align*}
\mathbb{E}\left[\sum^n_{i=1}|\Delta \overline{X}_{s}^i| \right]
\leq \,&|z^1-x^1|+\int^s_t \mathbb{E}\left[\sum_{j=1}^{n}\left(\sum_{i=1}^{n}\big|\widetilde{b}_{n,m,r}^{i,j}\big|\right)
\big| \Delta \overline{X}_{r}^j \big|
+ 2K^2 \sum^n_{i=1}\pig| \Delta \overline{X}_{r}^i\pigr|\right]dr\\
\leq \,&|z^1-x^1|+\int^s_t \mathbb{E}\left[\max_{1\leq \ell\leq n}\left(\sum_{i=1}^{n}\big|\widetilde{b}_{n,m,r}^{i,\ell}\big|\right)
\sum^n_{j=1}\big| \Delta \overline{X}_{r}^j \big| 
+2K^2 \sum^n_{j=1}|\Delta \overline{X}_{r}^j|\right]dr.
\end{align*}
 The Lipschitz continuity estimate for $b_{n,m}^i$ in Lemma \ref{lem estimate of b^i_n,m...} deduces that
\begin{align*}
    \max_{1 \leq \ell \leq n} \sum_{i=1}^{n} \left| \widetilde{b}_{n,m,s}^{i,\ell} \right|
    &\leq \max_{1 \leq \ell \leq n} \sum_{i=1}^{n}\sup_{\overline{x}\in \mathbb{R}^{dn}} \left| \nabla_{x^\ell} b_{n,m}^i (s,\overline{x}, \overline{\alpha}_{s}^i) \right|\\
    &= \max_{1 \leq \ell \leq n}
    \left( \sup_{\overline{x}\in \mathbb{R}^{dn}}\left| \nabla_{x^\ell} b_{n,m}^\ell (s, \overline{x}, \overline{\alpha}_{s}^\ell) \right| 
    + \sum_{i=1, i \neq \ell}^{n}\sup_{\overline{x}\in \mathbb{R}^{dn}} \left| \nabla_{x^\ell} b_{n,m}^i (s, \overline{x}, \overline{\alpha}_{s}^i) \right| \right)\\
    &\leq \sqrt{d}\left[K \left( 1 + \frac{1}{n} \right) + K\frac{n-1}{n} \right]\\
    &= 2\sqrt{d}K.
\end{align*}
It gives us that $
    \mathbb{E}\left[\sum^n_{i=1}|\Delta \overline{X}_{s}^i| \right]
\leq|z^1-x^1| + 2(\sqrt{d}K+ K^2) \int_{t}^{s} \mathbb{E}\left[\sum^n_{i=1}\big|\Delta \overline{X}_{r}^i\big| \right] \, dr
    $. Gr\"onwall's inequality yields
\begin{align*}
\mathbb{E}\left[\sum^n_{i=1}\big|\Delta \overline{X}_{s}^i\big| \right] \leq |z^1-x^1|e^{2(\sqrt{d}K+K^2)T}, \quad \text{for every $s\in [t,T]$}.
\end{align*}
Thus, we obtain from \eqref{1897} that
\begin{align}
&\left| \overline{v}_{\varepsilon,n,m}(t, \overline{x}) - \overline{v}_{\varepsilon,n,m}(t, \overline{z}) \right| \nonumber\\
&\leq 2K \sup_{\overline{\alpha} \in \overline{\mathcal{A}}^n_t} \frac{1}{n} \sum_{i=1}^{n} \mathbb{E} \left[ \int_{t}^{T} \left| \overline{X}_{s}^{i,m,\varepsilon,t,\overline{x},\overline{\alpha}} - \overline{X}_{s}^{i,m,\varepsilon,t,\overline{z},\overline{\alpha}} \right| ds + \left| \overline{X}_{T}^{i,m,\varepsilon,t,\overline{x},\overline{\alpha}} - \overline{X}_{T}^{i,m,\varepsilon,t,\overline{z},\overline{\alpha}} \right| \right]\nonumber\\
&\leq\dfrac{C_{d,K,T}}{n}|z^1-x^1|.
\label{1606}
\end{align}

\noindent{\bf Step 2. $\overline{v}_{\varepsilon,n,m}(t,\overline{x})$ is the unique classical solution of \eqref{eq. bellman bar v_e,n,m}:} In this step, the definition of viscosity solution is referred to the usual Crandall-Lions' definition as the equation is on $[0,T] \times \mathbb{R}^{dn}$ instead of the Wasserstein space. First note that the volatility term (the second-order term) of equation \eqref{eq. bellman bar v_e,n,m} can be written as $\frac{1}{2}\tr\big[Q \nabla_{\bar{x}\bar{x}}^2 v\big]$, with 
\begin{align*}
    Q &:= \begin{pmatrix}
        \sigma\sigma^\top(t,x^1,a^1) &0 &\ldots &0\\
        0 &\sigma\sigma^\top(t,x^2,a^2) &\ldots &0\\
        \vdots &\vdots&\ldots&\vdots   \\
        0 &0 &\ldots &\sigma\sigma^\top(t,x^n,a^n)
    \end{pmatrix}
    +\varepsilon^2I_{dn}+\begin{pmatrix}
        \sigma^0(t,x^1)\\
        \sigma^0(t,x^2)\\
        \vdots\\
        \sigma^0(t,x^n)
    \end{pmatrix}\begin{pmatrix}
        \sigma^0(t,x^1)\\
        \sigma^0(t,x^2)\\
        \vdots\\
        \sigma^0(t,x^n)
    \end{pmatrix}^\top.
\end{align*} It can be easily shown that $Q$ is positive definite. Consequently, we have $Q \nabla_{\bar{x}\bar{x}}^2 \overline{v}_{\varepsilon,n,m} = \Sigma\Sigma^\top \nabla_{\bar{x}\bar{x}}^2\overline{v}_{\varepsilon,n,m}$, where $\Sigma$ satisfies $\Sigma\Sigma^\top \geq \varepsilon^2 I_{dn}$. Using \cite[Chatper 4.6, Theorem 6.2]{yong1999stochastic}, we conclude that $\overline{v}_{\varepsilon,n,m}$ is the unique viscosity solution of the Bellman equation \eqref{eq. bellman bar v_e,n,m}. Due to the $1/2$-H\"older continuity of $\overline{v}_{\varepsilon,n,m}(t,\overline{x})$ in $t$, Lipschitz continuity in $\overline{x}$ as established in \eqref{1606} and the boundedness of the non-homogeneous term $f^i_{n,m}$ of \eqref{eq. bellman bar v_e,n,m}, we consider the equation \eqref{eq. bellman bar v_e,n,m} on $[0,T] \times B_R \subset [0,T] \times \mathbb{R}^{dn}$ with the parabolic boundary value of $\overline{v}_{\varepsilon,n,m}(t,\overline{x})$. For this localized equation, we can apply \cite[Theorem 8.4]{CKS00} to obtain a unique strong solution $\overline{v}^\dagger_{\varepsilon,n,m}(t,\overline{x})$ which lies in $C^{\frac{1+\alpha}{2},1+\alpha}_{\textup{loc}}([0,T) \times B_R) \cap C([0,T] \times \overline{B_R})$. We see that $\overline{v}^\dagger_{\varepsilon,n,m}(t,\overline{x})$ is also the viscosity solution to \eqref{eq. bellman bar v_e,n,m} in $[0,T] \times B_R$ under the usual Crandall-Lions' definition, by \cite[Proposition 2.10]{CKS00}. Therefore, as the viscosity solution to \eqref{eq. bellman bar v_e,n,m} on $[0,T] \times \mathbb{R}^{dn}$ is unique, then we see that $\overline{v}_{\varepsilon,n,m}=\overline{v}^\dagger_{\varepsilon,n,m} \mathds{1}_{[0,T]\times B_R}
+\overline{v}_{\varepsilon,n,m} \mathds{1}_{[0,T]\times (\mathbb{R}^{dn}\setminus B_R)}$ for any $R>0$. In other words, $\overline{v}_{\varepsilon,n,m}$ is continuously differentiable with respect to $\overline{x}$ on $\mathbb{R}^{dn} \times[0,T)$. We consider another equation
\begin{equation}
\left\{
\begin{aligned}
				&\partial_t u(t,\overline{x})
				+\sup_{\overline{a} \in A^n}\Bigg\{\dfrac{1}{n}\sum^n_{i=1}f^i_{n,m}(t,\overline{x},a^i)
				+\dfrac{1}{2}\sum^n_{i=1}\textup{tr}\left[\Big((\sigma\sigma^\top)(t,x^i,a^i)+\sigma^0\sigma^{0;\top}(t,x^i)+\varepsilon^2I_d\Big)\nabla_{x^ix^i}^2
				u(t,\overline{x})\right]\\ 
				&\h{80pt}+\sum^n_{i=1}\left\langle b^i_{n,m}(t,\overline{x},a^i)
				,\nabla_{x^i}\overline{v}_{\varepsilon,n,m}(t,\overline{x})\right\rangle
				+\frac{1}{2}\sum_{i,j=1,i\neq j}^n\textup{tr}\Big[\sigma^0(t,x^i)\sigma^{0;\top}(t,x^j)\nabla_{x^ix^j}^2u(t,\overline{x})\Big]\Bigg\}\\
				&=0\h{5pt} \text{in $[0,T) \times\mathbb{R}^{dn}$};\\
				&u(T,\overline{x})
				=\dfrac{1}{n}\sum^n_{i=1}g^i_{n,m}(\overline{x})
				\h{5pt} \text{ in $\mathbb{R}^{dn}$}.
			\end{aligned}
			\right.   
			\label{eq. bellman bar v_e,n,m wit fixed term}
\end{equation}
In the above equation, we note the unknown $u$ is only involved in the time differentiation and the second-order spatial differentiation, the remaining terms are all in $C^{0,\alpha}([0,T)\times B_{R'})$ for any $R'>0$ and some $\alpha>0$. As $\overline{v}_{\varepsilon,n,m}$ is the unique viscosity solution to \eqref{eq. bellman bar v_e,n,m} on $[0,T] \times \mathbb{R}^{dn}$ under the usual Crandall-Lions' definition, it is also a viscosity solution to \eqref{eq. bellman bar v_e,n,m wit fixed term} under the usual Crandall-Lions' definition as it is continuously differentiable on $\overline{x}$. By considering the equation in \eqref{eq. bellman bar v_e,n,m wit fixed term} in a local region with the parabolic boundary value $\overline{v}_{\varepsilon,n,m}$, we can apply \cite[Theorem 5.2]{M19} to find a classical solution $\overline{v}^*_{\varepsilon,n,m}(t,\overline{x})$ to \eqref{eq. bellman bar v_e,n,m wit fixed term} on that local region. As the viscosity solution of \eqref{eq. bellman bar v_e,n,m wit fixed term} on $[0,T] \times \mathbb{R}^{dn}$ is unique by \cite[Chatper 4.6, Theorem 6.2]{yong1999stochastic}, we see that $\overline{v}_{\varepsilon,n,m}(t,\overline{x})=\overline{v}^*_{\varepsilon,n,m}(t,\overline{x})$ is actually classical.\\
\hfill\\
\noindent {\bf Step 3. Bounds of derivatives of $\overline{v}_{\varepsilon,n,m}(t,\overline{x})$:} Note that $\overline{v}_{\varepsilon,n,m}(t,\overline{x})\in C^{1,2}([0,T]\times\mathbb{R}^{dn})$, the boundedness of the first-order derivative with respect to $\overline{x}$ follows from \eqref{1606}, while the boundedness of the second-order derivative is established using \cite[Chapter 4.7, Theorem 4]{K08}.
\end{proof}


\bibliography{Bib}
\bibliographystyle{abbrv} 
\end{document}